\numberwithin{equation}{section}
\numberwithin{figure}{section}
\theoremstyle{plain}
\newtheorem{thm}{\protect\theoremname}[section]
  \theoremstyle{definition}
  \newtheorem{rem}[thm]{\protect\remarkname}
  \theoremstyle{definition}
  \newtheorem{defn}[thm]{\protect\definitionname}
  \theoremstyle{plain}
  \newtheorem{prop}[thm]{\protect\propositionname}
  \theoremstyle{plain}
  \newtheorem{example}[thm]{\protect\examplename}
  \theoremstyle{plain}
  \newtheorem{lem}[thm]{\protect\lemmaname}
 \newlist{casenv}{enumerate}{4}
 \setlist[casenv]{leftmargin=*,align=left,widest={iiii}}
 \setlist[casenv,1]{label={{\itshape\ \casename} \arabic*.},ref=\arabic*}
 \setlist[casenv,2]{label={{\itshape\ \casename} \roman*.},ref=\roman*}
 \setlist[casenv,3]{label={{\itshape\ \casename\ \alph*.}},ref=\alph*}
 \setlist[casenv,4]{label={{\itshape\ \casename} \arabic*.},ref=\arabic*}
  \theoremstyle{plain}
  \newtheorem{cor}[thm]{\protect\corollaryname}
 \newtheorem{maintheorem}{Theorem}
  \providecommand{\corollaryname}{Corollary}
  \providecommand{\definitionname}{Definition}
  \providecommand{\lemmaname}{Lemma}
  \providecommand{\propositionname}{Proposition}
  \providecommand{\remarkname}{Remark}
 \providecommand{\casename}{Case}
\providecommand{\theoremname}{Theorem}
\providecommand{\examplename}{Example}
\global\long\def\d{\textup{d}}
\global\long\def\FF{\mathcal{F}}
\global\long\def\DD{\mathcal{D}}
\global\long\def\MM{\mathcal{M}}
\global\long\def\ord{\textup{ord}}
\global\long\def\Fh{\widehat{\mathcal{F}}}
\global\long\def\Gh{\widehat{\mathcal{G}}}
\global\long\def\var{\Delta}
\newcommand{\cl}[1]{\mathcal{#1}}
\newcommand{\sing}{\rm Sing}
\newcommand{\val}{{\rm   v}}
\global\long\def\dd{\textup{d}}
\begin{document}

\title{The Poincaré problem  in the dicritical case}

\author{Yohann Genzmer \& Rogério Mol}

\maketitle

\begin{abstract} We develop  a study on local polar invariants of planar complex analytic foliations at
$(\mathbb{C}^{2},0)$, which leads to the characterization of second type foliations and of generalized curve foliations, as well as a description of the $GSV$-index.  We apply it to the Poincaré problem for foliations on the complex projective plane  $\mathbb{P}^{2}_{\mathbb{C}}$, establishing, in the dicritical case, conditions for the
existence of a  bound  for the degree of an invariant algebraic curve $S$ in terms of the degree of the foliation $\mathcal{F}$. We characterize the existence of a solution for the Poincaré problem in terms of the structure of the set of local separatrices of $\mathcal{F}$ over the curve  $S$. Our method, in particular, recovers the known solution for the non-dicritical case, $\deg(S) \leq \deg (\mathcal{F}) + 2$.

\addtocontents{toc}{\protect\setcounter{tocdepth}{1}}
\tableofcontents{}
\end{abstract}
\footnotetext[1]{ {\em 2010 Mathematics Subject Classification:}
32S65.}
\footnotetext[2]{{\em
Keywords.} Holomorphic foliation, invariant curves, Poincaré problem, $GSV$-index.}
\footnotetext[3]{Work supported by MATH-AmSud Project CNRS/CAPES/Concytec. First author supported by a grant ANR-13-JS01-0002-0. Second author supported by  Pronex/FAPERJ and Universal/CNPq.}

\section{Introduction}

Let $\mathcal{F}$ be a singular holomorphic foliation on $\mathbb{P}^{2}_{\mathbb{C}}$.   The  number  of points of
tangency, with multiplicities counted, between $\mathcal{F}$ and a  non-invariant line
$L \subset  \mathbb{P}^{2}_{\mathbb{C}}$  is   the  \emph{degree} of the foliation  and is
denoted by $\deg(\mathcal{F})$. In  \cite{poincare1891}, H. Poincaré proposed the problem of bounding  the
  degree of
an  algebraic curve $S$ invariant by  $\mathcal{F}$ in  terms of
$\deg(\mathcal{F})$    as a step in
finding a rational first integral for a polynomial differential equation in two complex variables. Known in
Foliation Theory as the \emph{Poincaré problem}, along the past few decades this problem has  gained
some partial answers. In 1991,
D. Cerveau and A. Lins Neto proved in \cite{cerveau1991} that if $S$
has at most nodal singularities then  $\deg(S) \leq \deg(\mathcal{F}) +
2$,   this bound being reached if and only if
$\mathcal{F}$ is a logarithmic foliation ---  one induced by a closed meromorphic 1-form
with simple poles. Later, in 1994, M. Carnicer obtained in   \cite{carnicer1994}
 the same inequality when  the singularities of
$\mathcal{F}$   over $S$  are all \emph{non-dicritical}, meaning that the number of local \emph{separatrices}
--- local irreducible invariant
 curves ---
is finite.  In 1997, in the works \cite{brunella1997I} and
 \cite{brunella1997II}, M. Brunella formulated the Poincaré problem in terms of $GSV$-indices,
defined by X. Gómez-Mont J. Seade and A. Verjovsky in \cite{gomezmont1991} as a kind of Poincaré-Hopf index of the
  restriction to an invariant curve of  a vector field tangent to $\mathcal{F}$.
To wit, Brunella shows that the bound  $\deg(S) \leq
\deg(\mathcal{F}) + 2$ occurs whenever the sum over $S$ of the $GSV$-indices of $\mathcal{F}$ with respect to the local branches of $S$ is non-negative.

Evidently, the study of global invariant curves leads us to the universe of local foliations on $(\mathbb{C}^{2},0)$, in which
we distinguish two families    with relevant properties.
 First,  \emph{generalized curve foliations}, defined in \cite{camacho1984} by C. Camacho, A. Lins Neto and P. Sad, which are foliations without saddle-nodes in their
 desingularization. They  have a property of minimization of Milnor numbers and are characterized, in the non-dicritical case, by the vanishing of the  $GSV$-index  \cite{brunella1997II,lehmann2001}.
The second family, which contains the first one, is formed by
 {\em  second type foliations}, introduced by J.-F. Mattei and E. Salem in
\cite{mattei2004}, which
may admit saddles-nodes when desingularized provided that they are
non-tangent saddles-nodes, meaning that no weak separatrix is  contained in the
desingularization divisor. They are characterized by the fact that their
desingularization coincide with the reduction of the set of formal separatrices.
These foliations satisfy a property of minimization of
the algebraic multiplicity    \cite{mattei2004, genzmer2007}.

In a recent work \cite{cano2015}, F. Cano, N. Corral and the second author developed a study of local polar
invariants obtaining,  in the non-dicritical case, a characterization of generalized curves and
of foliations of second type as well as an expression of the $GSV$-index in terms of these invariants. Essentially, the technique therein consists on calculating the intersection number between a generic polar curve of a local foliation $\mathcal{F}$
and a curve of formal separatrices $C$. The same  number is produced for the formal ``reference foliation'' having the
local equation of $C$ as a first integral. The difference of these two numbers is the $GSV$-index of
$\mathcal{F}$ with respect to $C$. In this way,   the known answers to the Poincaré problem just mentioned are obtained.

In this paper we  extend this approach    to   \emph{dicritical} foliations --- those with infinitely many
separatrices. The difficulty now lies
in choosing a finite set of separatrices in order to produce such a ``reference foliation''. The solution is to use a \emph{balanced equation of separatrices},
a concept introduced by the first author in  \cite{genzmer2007} for the study of the ``realization problem'' ---
 the existence of   foliations with prescribed   reduction of
singularities and   projective holonomy representations.
Given a local foliation $\Fh$ at $(\mathbb{C}^{2},0)$ with minimal reduction of singularities $E:(\MM,\DD)\rightarrow(\mathbb{C}^{2},0)$, an irreducible component $D \subset \DD$ is said to be
\emph{non-dicritical} --- respectively \emph{ dicritical} --- if it is invariant --- respectively non-invariant ---
by the strict transform foliation $E^{*} \Fh$.  The \emph{valence} of $D \subset \DD$ is the number $\val(D)$  of other components of $\DD$ intersecting $D$.
A balanced equation of separatrices turns out to be
a formal meromorphic function $\hat{F}$ that  encompasses the equations of  all
isolated separatrices --- the ones  crossing non-dicritical components of  $\DD$ ---
along with the equations of $2 - \val(D)$  separatrices associated to each dicritical component $D \subset \DD$.
This can be a negative number, so dicritical separatrices may engender poles in the balanced equation.
We can additionally  adjust this definition when  a  local set of separatrices $C$ for $\Fh$ is fixed in order to get a  balanced equation \emph{adapted}
to   $C$. This is achieved by rebalancing the number of dicritical separatrices of $\hat{F}$ in such a way that
 $C \subset (\hat{F})_{0}$. We develop these concepts in section~\ref{section-balanced-equation}.

In section~\ref{section-polar-excess}, our starting point is
the  extension of the definition
 of the \emph{polar intersection number}
  introduced in \cite{cano2015}  to  a formal meromorphic $1-$form, which will normally be $\d \hat{F}$, where $\hat{F}$ is a balanced equation of separatrices of the foliation $\Fh$.
For a fixed set of separatrices $C$ of   $\Fh$, the comparison of the polar intersection numbers of  $\Fh$ and
$\d \hat{F}$, where
 $\hat{F}$ is a balanced equation of separatrices  such that $C \subset (\hat{F})_{0}$,
gives rise to the \emph{polar excess  index}, denoted by
$\var_{p} (\Fh,C)$. This non-negative invariant   
works as a measure of the existence of saddles-nodes in the desingularization of the foliation: $\var_{p} (\Fh,(F)_{0}) = 0$ if and only if
$\Fh$ is a generalized curve. This is the content of   Theorem~\ref{thm-generalized-curve}, which extends to the dicritical case the characterization of generalized curve foliation provided, in the non-dicritical case, by the vanishing of the $GSV$-index
with respect to the complete set of separatrices.
Actually, in section \ref{section-GSVindex},
Theorem~\ref{thm-gsv-polar} establishes a link between the polar excess and
the $GSV$-index  for a convergent  set of separatrices:
\[GSV_{p}(\Fh,C)=\var_{p}(\Fh,C)+(C,(\hat{F})_{0}\backslash C)-(C,(\hat{F})_{\infty}),\]
where $\hat{F}$ is a balanced equation of separatrices such that $C \subset (\hat{F})_{0}$. Notice that, when $\Fh$ is non-dicritical and   $C$ is the complete set of separatrices, this gives
    $GSV_{p}(\Fh,C)=\var_{p}(\Fh,C)$.

This formulation of the $GSV$-index enables us in Theorem~\ref{thm-poincare-dic} in section
 \ref{section-poincaré-problem} to propose a bound to the Poincaré problem in terms of local balanced sets of separatrices.  For  a foliation $\mathcal{F}$
 on $\mathbb{P}^{2}_{\mathbb{C}}$    having an  invariant algebraic curve $S$, if $d = \deg(\mathcal{F})$ and
 $d_{0} = \deg(S)$, it holds
\[  d_{0} \leq
d+2+\frac{1}{d_{0}}\sum_{p\in \sing(\mathcal{F})\cap S} \left[
(S, (\hat{F}_{p})_{\infty})_{p}-(S,(\hat{F}_{p})_{0}\backslash S)_{p} \right], \]
where $\hat{F}_{p}$ is a balanced equation   adapted
to the local branches of $S$.
Besides,
 equality holds   if all singularities
of $\mathcal{F}$ over $S$  are generalized curves.  In  particular, this inequality recovers the bound $d_{0} \leq d +2$ for the cases treated in \cite{cerveau1991} and \cite{carnicer1994}.

In the right side of the previous formula, the terms $(S, (\hat{F}_{p})_{\infty})_{p}$ 
are obstructions to
the existence of a ``universal bound'' for the Poincaré problem. This is precisely what happens in two classical counterexamples to be discussed
in section  \ref{section-poincaré-problem}:
the foliations of degree 1 on $\mathbb{P}^{2}_{\mathbb{C}}$ given by $\omega= \d ( x^{p}z^{q-p}/y^{q})$, with $p < q$,
and the pencil of Lins Neto \cite{linsneto2002}, a family of foliations of degree 4 admitting rational first integrals
with unbounded degrees. In the first case, the typical fiber has a local branch at a dicritical  singularity crossing a  dicritical component of valence two.
In the second family,   the generic fiber repeatedly crosses  radial singularities at a   number of times  which is unbounded within the family.

In section~\ref{section-topologically-bounded} we study topologically bounded invariants of local foliations ---
those  bounded by a function of the Milnor number.
The central result of this section ---
Theorem~\ref{thm:Let--be-1} --- states that a    local curve of separatrices   that
contains, besides the isolated separatrices, one separatrix attached to each dicritical component of valence one and
a fixed number of separatrices attached to dicritical components of valence three or higher --- separatrices
crossing components of valence two are forbidden --- has a reduction process
whose length is topologically bounded, the same being true for
the algebraic multiplicity. This result is sharp, as shown by the example
$px\dd y-qy\dd x = 0$ with $p,q \in \mathbb{Z}^{+}$  co-prime. Here, the Milnor number is one and
curves of separatrices --- with a single branch passing by a component of valence two, when $q > p >1$, or with two branches passing by a dicritical component of valence one, when $q > p =1$ --- may be obtained
with   reduction trees of arbitrarily large length.
Returning to the Poincaré problem, in Theorem~\ref{thm-poincare-dicI} we use the inequality of Theorem~\ref{thm-poincare-dic}
in order to prove the existence of a bound for the Poincaré problem whenever
the local branches of the algebraic curve $S$ are subject to the conditions of topological
boundedness of Theorem \ref{thm-poincare-dic}.
 This result especially indicates that the two classical counterexamples for the Poincaré problem
just mentioned  offer essentially the two ways to violate the existence of a bound: either by means of  highly degenerated   separatrices crossing
 dicritical components of valences one or  two, or through a multiple branched curve of separatrices
attached to  dicritical components of other valences.

We close this article with section \ref{section-topological-invariance}, where
we  apply   local polar invariants on a result on the
topological invariance of the algebraic multiplicity of a foliation. In Theorem~\ref{thm-topological-invariance}
we prove that, for local foliations at  $(\mathbb{C}^{2},0)$ having only convergent separatrices, the
property of being second class and the algebraic multiplicity are   topological invariants.
This extends similar results in \cite{camacho1984}, for generalized curve foliations, and in
\cite{mattei2004}, for non-dicritical second class foliations.

\section{Basic notions and notations}

 A \emph{ germ of formal foliation $\Fh$} in $\mathbb{C}^{2}$
is the object defined by a germ of formal $1-$form at $0\in\mathbb{C}^{2}$
\[
\hat{\omega}=\hat{a}\left(x,y\right)\d x+\hat{b}\left(x,y\right)\d y,
\]
where $\hat{a},\hat{b}\in\mathbb{C}\left[\left[x,y\right]\right]$.
A \emph{separatrix} for $\Fh$ is a germ of formal irreducible invariant curve.  If $S$ is  defined
by a formal equation $\hat{f}$,  then the invariance condition is expressed algebraically as
\[
\hat{f}\textup{\ divides\ }\hat{\omega}\wedge\d\hat{f}\textup{\ in\ }\mathbb{C}\left[\left[x,y\right]\right].
\]
A formal foliation is said to be \emph{non-dicritical} when it has
finitely many separatrices. From now on, $E:(\MM,\DD)\rightarrow(\mathbb{C}^{2},0)$
stands for the process of \emph{reduction of singularities} or  \emph{desingularization}  of $\Fh$
(see
\cite{seidenbeg1968} and also \cite{camacho1984}), obtained by a finite sequence of punctual blow-ups having $\DD = E^{-1}(0)$ as  the exceptional divisor, formed by a finite union of projective lines with normal crossings.
In this process, all separatrices of $\Fh$ become smooth, disjoint and transverse to $D$, none of them
passing though a corner.
Besides, the
singularities along $\DD$ of the pull-back foliation $E^{*}\Fh$ become \emph{reduced} or \emph{simple}, which means
that, under some local formal change of coordinates, their linear
parts belong to the following list:
\begin{enumerate}[(i)]
\item $y\d x - \lambda x\d y$ ,\quad{}$\lambda\not\in\mathbb{Q}^{+}$; \smallskip
\item $x\d y$.
\end{enumerate}

In the first case --- which is referred to as \emph{non-degenerate}---, there are two formal separatrices tangent to the axes
$\left\{ x=0\right\} $ and $\left\{ y=0\right\}$. In the convergent
category, both separatrices converge. In the second case,
the singularity is said to be a \emph{saddle-node}. The formal normal
form for such a singularity is given in \cite{mattei2004}: up to a formal
change of coordinates, the singularity is given by a $1$-form of
the  type
\begin{equation}
\label{eq-saddle-node-form}
\left(\zeta x^{k}-k\right)y\d x+x^{k+1}\d y,\quad k\in\mathbb{N}^{*},\zeta\in\mathbb{C}.
\end{equation}
The invariant curve $\{x=0\}$ is called   \emph{strong invariant curve}
while $\{y=0\}$ is the \emph{weak} one. In the convergent category, the
strong separatrix  always converges whereas the weak separatrix
may be purely formal,  as in the famous example of Euler
\[
\left(x-y\right)\d x+x^{2}\d y,
\]
where $y\left(x\right)=\sum_{n\geq1}\left(n-1\right)!x^{n}$ is the
Taylor expansion of the weak invariant curve. The integer $k+1 >1$ will be called
\emph{weak index} of the saddle-node.

 If, in the reduction process, the germ of the divisor $\DD$
happens to contain the weak invariant curve of some saddle-node, then the
singularity is said to be a \emph{tangent saddle-node}. A non-tangent saddle-node
is also   said to be \emph{well-oriented} with respect to $\DD$. We will denote
by ${\rm T}(\Fh)$ the set of all   tangent saddle-nodes
of $\Fh$. Following \cite{mattei2004}, we say that the foliation $\Fh$ is \emph{in
the second class} or is \emph{of second type} when none of the singularities
of $E^{*}\Fh$  over $\DD$ are tangent saddle-nodes.

Let $\Fh$ be  a germ of foliation  having $(S,p)$ as  a germ of formal smooth invariant curve.
Take local coordinates $(x,y)$ in which $S$ is the curve $\{y=0\}$ and $p$ the point $(0,0)$. Let
$\omega=\hat{a}(x,y)\d x+\hat{b}(x,y)\d y$ be a defining $1$-form for $\Fh$.
The integer $\textup{ord}_{0}\hat{b}(x,0)$ is called the \emph{tangency index}
of $\Fh$ at $p$ with respect to $S$ and is denoted by $\textup{Ind}(\Fh,S,p)$.
This is an invariant associated to $\Fh$ and $S$, independent
 of the choices  made. If $S$ is weak separatrix of a saddle-node, then $\textup{Ind}(\Fh,S,p) > 1$
 is precisely the weak index. On the other hand, if $S$ is either the strong separatrix of a saddle-node or a separatrix of
 a non-degenerate reduced singularity, then
 $\textup{Ind}(\Fh,S,p) = 1$.

In the exceptional divisor $\mathcal{D}$, we denote by $\textup{Dic}(\Fh)$
the set of \emph{dicritical components}, comprising all projective lines generically
transverse to $E^{*}\Fh$.
A separatrix
$S$  of $\Fh$ is said to be \emph{isolated} if its strict transform $E^{*}S$ is not attached to a dicritical
component. This concept is well defined as long as we fix a minimal  reduction of singularities for $\Fh$ --- see that,
in the definition of  reduction of singularities, there are also conditions on the desingularization of the separatrices.
We  denote by $\mathcal{I}(\Fh)$ the set of isolated separatrices.
On the other hand, a separatrix whose strict transform
crosses a dicritical component is called a \emph{curvet}.
The set of all curvets associated
to  $D\in\textup{Dic}(\Fh)$ is
 denoted by $\textup{Curv}(D)$. Finally, $\val(D)$ stands for the \emph{valence} of $D\in\textup{Dic}(\Fh)$,
 defined as the number of components of $\mathcal{D}$ intersecting  $D$.

\begin{rem}
The above definitions  can be formulated in the convergent category
and it may seem somewhat strange to introduce them in the formal category. Indeed, for convergent foliations,
the natural and geometric notion of \emph{leaves}
does exist whereas only the notion of separatrix makes sense in the
formal setting. However, the interest and the need to work with
formal objects will be evident as soon as the concept of \emph{balanced
equation} is defined.
\end{rem}

\section{Balanced equation of separatrices}

\label{section-balanced-equation}

With a slight change in the definition, we follow \cite{genzmer2007}
in the  concept below.
\begin{defn}
\label{def-balanced-set}
A \emph{balanced equation of separatrices} for a germ of formal singular
foliation in $(\mathbb{C}^{2},0)$ is a formal meromorphic
function $\hat{F}$ whose divisor
has the   form
\[ (\hat{F})_{0}-(\hat{F})_{\infty} \ = \
\sum_{C\in\mathcal{I} (\Fh)} (C)+\sum_{D\in\textup{Dic} (\Fh)}\ \sum_{C\in\textup{Curv\ensuremath{ (D)}}}a_{D,C} (C) ,
\]
where, for every  dicritical  component $D \subset \DD$,
  the coefficients $a_{D,C}\in\mathbb{Z}$  are  non-zero except for finitely many $C\in\textup{Curv\ensuremath{ (D)}}$
  and satisfy the following equality:
\begin{equation}
\sum_{C\in\textup{Curv\ensuremath{ (D)}}}a_{D,C}=2- \val(D).\label{eq:1-balance}
\end{equation}
The balanced equation of separatrices $\hat{F}$ is said to be \emph{adapted}
to a curve of separatrices $C$ if $C \subset (\hat{F})_{0}$.
\end{defn}
For instance, the radial foliation given by $x\d y-y\d x$ has
only one dicritical component whose valence is $0.$ Thus $F=xy$ is
a balanced equation of separatrices. However, $F= xy (x-y)/(x+y)$
is also  a balanced equation, adapted to the curve $C = \{x-y=0\}$. If $\Fh$ is non-dicritical, then
a balanced equation  is nothing but any equation
of the finite set of separatrices.

We  recall some basic facts about balanced equations of separatrices
established in \cite{genzmer2007}. First a definition:
\begin{defn}
Let $\Fh$ be a formal foliation at $(\mathbb{C}^{2},0)$ and
$E:(\MM,\DD)\rightarrow(\mathbb{C}^{2},0)$ be a minimal process of reduction of singularities.
The \emph{tangency excess} of $\Fh$ along $\DD$ is the number
\begin{equation}
\label{eq-tau}
\tau(\Fh)=\sum_{q \in {\rm T}(E^{*}\Fh)}\sum_{D\in V(q)}\rho(D)(\textup{Ind}(E^{*}\Fh,D,q)-1),
\end{equation}
where $q$ runs over all the tangent saddle-nodes of $E^{*}\Fh$ and  $V(q)$ stands for the set of irreducible components of
$\DD$  containing
the point $q$. The number $\rho(D)$ stands for  the \emph{multiplicity} of $D$, which coincides with
the algebraic multiplicity of a curve $\gamma$ at $(\mathbb{C}^{2},0)$ such that $E^{*} \gamma$ is transversal
to $D$  outside a corner of $\DD$.
\end{defn}
It is clear that $\tau(\Fh) \geq 0$. Besides, $\tau(\Fh) = 0$ if and only if
there are no tangent saddle-nodes in the reduction of singularities of $\Fh$, that is, if and only if $\Fh$ is a foliation of second type. Having the definition in mind, the following fact is proved in \cite{genzmer2007}:

\begin{prop}
\label{prop:Equa-Ba} Let $\hat{F}$ be a balanced equation of
separatrices for the formal foliation $\Fh$. Denote by $\nu_{0}(\hat{F})$ and $\nu_{0}(\Fh)$ their
algebraic multiplicities. Then
\[\nu_{0}(\Fh)=\nu_{0}(\hat{F})-1+\tau(\Fh).\]
\end{prop}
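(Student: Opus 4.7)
The plan is to argue by induction on the length $n$ of the minimal reduction of singularities of $\Fh$. For the base case $n=0$, the singularity is already reduced, so its normal forms give $\nu_{0}(\Fh)=1$ and exactly two formal separatrices, both smooth; a balanced equation is $\hat F=f_{1}f_{2}$ with $\nu_{0}(\hat F)=2$, and since $\DD=\emptyset$ there are no tangent saddle-nodes and $\tau(\Fh)=0$, so the identity reduces to $1=2-1+0$.

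For the inductive step I would take the first blow-up $\pi\colon(\tilde M,D_{1})\to(\mathbb{C}^{2},0)$ of the reduction of $\Fh$, set $\tilde\Fh=\pi^{*}\Fh$, and let $p_{1},\ldots,p_{r}$ be the singular points of $\tilde\Fh$ on $D_{1}$. Each germ $\tilde\Fh_{p_{i}}$ has a minimal reduction of length strictly less than $n$, so the inductive hypothesis provides $\nu_{p_{i}}(\tilde\Fh_{p_{i}})=\nu_{p_{i}}(\hat G_{i})-1+\tau(\tilde\Fh_{p_{i}})$ for a local balanced equation $\hat G_{i}$. The first task is to produce the $\hat G_{i}$ compatibly with a chosen balanced equation $\hat F$ for $\Fh$: the strict transforms of the separatrices of $\Fh$ passing through $p_{i}$ are in bijection with the separatrices of $\tilde\Fh_{p_{i}}$, and the isolated/curvet classification is preserved since the dicritical components of $\tilde\Fh_{p_{i}}$ are exactly those dicritical components of the reduction of $\Fh$ that are infinitely near $p_{i}$. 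The global balancing relation $\sum_{C}a_{D,C}=2-\val(D)$ for a dicritical $D\neq D_{1}$ then descends to the local one; for $D=D_{1}$ dicritical, it has to be redistributed among the points $p_{i}$, using that the local valence $v_{i}$ of $D_{1}$ at $p_{i}$ satisfies $\sum_{i}v_{i}=\val(D_{1})$.

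Next I would establish two ``Noether-type'' identities for the first blow-up,
\[
\nu_{0}(\Fh)=\sum_{i=1}^{r}\nu_{p_{i}}(\tilde\Fh_{p_{i}})+A, \qquad \nu_{0}(\hat F)=\sum_{i=1}^{r}\nu_{p_{i}}(\hat G_{i})+B,
\]
where $A$ and $B$ are combinatorial corrections depending only on the dicritical status of $D_{1}$ and on the configuration of the $p_{i}$. The first formula comes from tracking the vanishing order of $\pi^{*}\omega$ along $D_{1}$, which equals $\nu_{0}(\Fh)$ or $\nu_{0}(\Fh)+1$ according as $D_{1}$ is non-dicritical or dicritical, and then accounting for the excess vanishing localized at each $p_{i}$. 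The second is obtained from the explicit divisor identity $(\pi^{*}\hat F)=\nu_{0}(\hat F)\cdot D_{1}+\sum_{C}a_{C}\tilde C$ and restriction to neighborhoods of the $p_{i}$. The decisive numerical check is $A-B=-(r-1)$ in both the non-dicritical and dicritical cases. Combined with the additivity $\tau(\Fh)=\sum_{i}\tau(\tilde\Fh_{p_{i}})$, which is immediate from the definition of $\tau$ once one verifies that the $\rho$-values attached to the components of $V(q)$ agree in the global and local reductions, summing the inductive identities over $i$ then yields $\nu_{0}(\Fh)=\nu_{0}(\hat F)-1+\tau(\Fh)$.

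The principal obstacle is the dicritical case of the Noether-type formula for $\nu_{0}(\hat F)$: the possibly negative coefficients appearing in the balanced equation and the redistribution of $D_{1}$-curvets across the points $p_{i}$ make the computation of $B$ delicate, and the exact cancellation $A-B=-(r-1)$ is what ultimately makes the induction close.
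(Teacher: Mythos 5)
A preliminary remark: the paper does not actually prove Proposition~\ref{prop:Equa-Ba} --- it is quoted from \cite{genzmer2007} --- so your argument has to stand entirely on its own. Your overall strategy (induction on the length of the reduction, pushing a balanced equation through the first blow-up as in Lemma~\ref{lemma-balanced}) is the natural one, but the two numerical claims on which your induction is supposed to close are both false. Take $k\geq 1$, $\zeta$ generic, and let $\Fh$ be given by
\[
\omega=(\zeta y^{k}x-kx^{k+1})\,\d y+\big((1-\zeta)y^{k+1}+kx^{k}y\big)\,\d x,
\]
the blow-down of the normal form \eqref{eq-saddle-node-form} placed with its weak separatrix along $D_{1}$. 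Here $\nu_{0}(\Fh)=k+1$, one non-dicritical blow-up reduces $\Fh$, and $D_{1}$ carries exactly two singular points: a tangent saddle-node $p_{1}$ of weak index $k+1$ (weak separatrix $=D_{1}$, strong separatrix the transform of $\{y=0\}$) and a reduced non-degenerate point $p_{2}$. Writing $\Gh_{i}$ for the germ of $\pi^{*}\Fh$ at $p_{i}$, one has $\tau(\Fh)=\rho(D_{1})\cdot k=k$ while $\tau(\Gh_{1})=\tau(\Gh_{2})=0$: additivity of $\tau$ fails because the germ of $D_{1}$ at $p_{i}$ is \emph{not} a component of the divisor of the local reduction of $\Gh_{i}$ --- it is demoted to an ordinary separatrix --- so a saddle-node whose weak separatrix lies along $D_{1}$ is tangent for $\Fh$ but well-oriented for the local germ. (Independently, the weights $\rho$ do \emph{not} agree in the global and local reductions: a component created by blowing up the corner $D_{1}\cap D_{2}$ has $\rho=2$ for $\Fh$ but $\rho=1$ for the germ at $p_{1}$, as the cusp $y^{2}=x^{3}$ versus its smooth strict transform shows.) In the same example $A=\nu_{0}(\Fh)-\sum_{i}\nu_{p_{i}}(\Gh_{i})=(k+1)-2=k-1$, so $A$ is not a combinatorial correction depending only on the configuration of the $p_{i}$; and with $\hat F=xy$ and $\hat G_{i}$ of multiplicity $2$ at each $p_{i}$ one gets $B=-2$, hence $A-B=k+1\neq-(r-1)=-1$.

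There is also a sign slip: even granting additivity of $\tau$, substituting $A-B=-(r-1)$ into your summation yields $\nu_{0}(\Fh)=\nu_{0}(\hat F)-2r+1+\tau(\Fh)$; what your scheme actually requires is $A-B=r-1$, which is what one finds in the tame case of a generic homogeneous $1$-form of degree $\nu$ (there $r=\nu+1$, $A=-1$, $B=-(\nu+1)$). The correct relation in the non-dicritical case is $A-B=(r-1)+\tau(\Fh)-\sum_{i}\tau(\Gh_{i})$, so the failure of additivity of $\tau$ and the non-combinatorial part of $A$ are one and the same phenomenon. To repair the induction you should compare $\nu_{0}(\Fh)$ not with the local multiplicities $\nu_{p_{i}}(\Gh_{i})$ but with the tangency indices along $D_{1}$, using $\sum_{q\in D_{1}}\textup{Ind}(\pi^{*}\Fh,D_{1},q)=\nu_{0}(\Fh)+1$ when $D_{1}$ is invariant (and the analogous tangency count when $D_{1}$ is dicritical); the excess $\textup{Ind}-1$ at a first-level tangent saddle-node is then precisely the term that must be absorbed into $\tau$, which is how the identity is established in \cite{genzmer2007}. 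As written, the dicritical redistribution you single out as the principal obstacle is not the main issue: the argument already breaks at the first non-dicritical tangent saddle-node.
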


Inspired in this result, if
$\hat{F}$ is a balanced  equation of separatrices for the foliation $\Fh$,
we define the number
\[ \nu^{*}_{0}(\Fh)=\nu_{0}(\hat{F})- 1\]
and call it the \emph{pure algebraic multiplicity} of $\Fh$ at $0 \in \mathbb{C}^{2}$.
This invariant assembles the contribution to the algebraic multiplicity given by the separatrices
and the \emph{reduction structure} --- by which we mean the combinatory of the
desingularization tree along with the position of the dicritical components ---, but discarding the contribution given by the tangent saddles-nodes.
Evidently, $\nu^{*}_{0}(\Fh) \leq  \nu_{0}(\Fh)$,   equality holding if
and only if $\Fh$ is of second type.
The number $\nu^{*}_{0}(\Fh)$
would turn out to be the the
 algebraic multiplicity   of some  second type foliation sharing with
 $\Fh$ the same reduction structure and the same balanced equation. However,    we cannot assure the
 existence of  such a foliation and we do not know if $\nu^{*}_{0}(\Fh)$ is realizable as an algebraic multiplicity
 in this way.
 Nevertheless, we can prove the following:

\begin{lem}
\label{lemma_pure_multiplicity}
We have $\nu^{*}_{0}(\Fh) \geq 0$. The inequality is strict when $\Fh$ is dicritical.
\end{lem}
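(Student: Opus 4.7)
The plan is to compute $\nu_{0}(\hat{F})$ explicitly from its divisor and reduce the required inequalities to a combinatorial statement on the minimal desingularisation tree of $\Fh$. Since every separatrix $C$ in the support of $\hat{F}$ is smooth with strict transform meeting some component $D_{C}\subset\DD$ transversally at a smooth point, we have $\nu_{0}(C)=\rho(D_{C})$, and summing over the divisor of $\hat{F}$ with its balancing coefficients yields
\[
\nu_{0}(\hat{F})=\sum_{C\in\mathcal{I}(\Fh)}\rho(D_{C})+\sum_{D\in\textup{Dic}(\Fh)}(2-\val(D))\,\rho(D).
\]
The case where $\Fh$ is already reduced is immediate: $\DD$ is empty, $\Fh$ is non-dicritical, and $\hat{F}$ is the product of the two separatrices, so $\nu_{0}(\hat{F})=2$. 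The non-dicritical case with non-trivial reduction follows from the Camacho--Sad separatrix theorem: at least one isolated separatrix exists and contributes a term $\rho(D_{C})\geq 1$, forcing $\nu_{0}(\hat{F})\geq 1$, hence $\nu_{0}^{*}(\Fh)\geq 0$.

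The strict inequality in the dicritical case rests on the identity
\[
\sum_{D\subset\DD}(2-\val(D))\,\rho(D)=2,
\]
valid for any non-trivial reduction. I would prove it by induction on the blow-up sequence: the first blow-up contributes $(2-0)\cdot 1=2$, and any subsequent blow-up---either at a smooth point of $\DD$, where the new component has valence $1$ and multiplicity equal to that of its unique neighbour (whose valence grows by $1$), or at a corner, where the new component has valence $2$ and multiplicity $\rho_{i}+\rho_{j}$ (with the neighbours' valences preserved)---leaves the total unchanged by a direct bookkeeping. Combining this identity with the previous formula, and using that the number of isolated separatrices of $\Fh$ attached to a non-dicritical component $D$ equals $n(D)=s(D)-v_{\textup{nd}}(D)$, where $s(D)$ is the number of singularities of $E^{*}\Fh$ on $D$ and $v_{\textup{nd}}(D)$ counts the non-dicritical neighbours in the dual graph, a straightforward rearrangement gives
\[
\nu_{0}(\hat{F})=2+\sum_{D\textup{ non-dic}}\bigl(s(D)+v_{\textup{dic}}(D)-2\bigr)\,\rho(D),
\]
with $v_{\textup{dic}}(D)=\val(D)-v_{\textup{nd}}(D)$.

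It remains to show that the last sum is non-negative whenever $\Fh$ is dicritical. The plan is a structural induction on the non-dicritical sub-forest of the dual tree: each non-dicritical component $D$ either already satisfies $s(D)+v_{\textup{dic}}(D)\geq 2$, or its local deficit can be charged to a strictly positive contribution of adjacent dicritical components. This uses the minimality of the reduction---which forbids non-dicritical components whose only singularity would be a single corner with a non-dicritical neighbour---together with the Camacho--Sad relation $\sum_{p\in D}\textup{CS}(E^{*}\Fh,D,p)=-1-\val(D)$, which guarantees enough singularities on each invariant $\mathbb{P}^{1}$. The main obstacle is precisely this final combinatorial step: controlling the balance between small local deficits on non-dicritical components along long chains and the surpluses produced by their dicritical neighbourhood, which is tightest when the dicritical part of the tree is attached to the non-dicritical part through a single component.
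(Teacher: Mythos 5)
Your reduction of the statement to a combinatorial inequality on the dual graph is correct as far as it goes: the formula $\nu_{0}(\hat{F})=\sum_{C\in\mathcal{I}(\Fh)}\rho(D_{C})+\sum_{D\in\textup{Dic}(\Fh)}(2-\val(D))\rho(D)$ is right, and the identity $\sum_{D\subset\DD}(2-\val(D))\rho(D)=2$ does hold for any non-trivial blow-up process, by exactly the bookkeeping you describe. This gives the clean reformulation
\[
\nu_{0}(\hat{F})=2+\sum_{D\ \textup{non-dic}}\bigl(n(D)+\val(D)-2\bigr)\rho(D),
\]
with $n(D)$ the number of isolated separatrices attached to $D$, which is genuinely different from (and in principle more precise than) the paper's argument. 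But the proof is not complete: the non-negativity of this last sum in the dicritical case is \emph{equivalent} to the statement of the lemma, and you explicitly leave it as ``the main obstacle.'' Individual terms are negative in general --- a non-dicritical component whose only singular point is a corner with a non-dicritical neighbour contributes $-\rho(D)$, and contrary to your claim such dead-end components are \emph{not} excluded by minimality (they occur already in the resolution of a cusp-type separatrix, where they have self-intersection $\le -2$ and so cannot be contracted); the Camacho--Sad relation only guarantees $s(D)\ge 1$, not $s(D)+v_{\textup{dic}}(D)\ge 2$. Note also that the sum genuinely can be negative when $\Fh$ is non-dicritical (the Poincar\'e--Dulac and nilpotent examples in the paper's Remark have $\nu_{0}(\hat{F})=1$), so whatever argument closes the gap must use the presence of a dicritical component in an essential way; there is no purely local surplus/deficit cancellation that works component by component.

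For comparison, the paper closes exactly this gap by two geometric inputs: (1) every connected component of the invariant part of $\DD$ carries at least one separatrix of $\Fh$ (a result quoted from the literature), which handles dicritical components of valence $1$ and $2$ (the latter disconnect the invariant part into two pieces, each supplying an isolated separatrix); and (2) for a dicritical component $D$ with $\val(D)\ge 3$, each of the $\val(D)-2$ (or more) branch points of $D$ where the reduction continues produces an isolated separatrix whose multiplicity is at least $\rho(D)$, which neutralizes the $(2-\val(D))\rho(D)$ pole contribution. You would need to import both of these facts (or equivalents) to finish your combinatorial induction; without them the argument does not yet prove the lemma.
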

\begin{proof}
We have to proof that,
if $\hat{F}$ is a balanced equation of separatrices for $\Fh$, then
$\nu_{0}(\hat{F}) \geq 1$, the equality being strict in the   dicritical case.
The result is obvious if $\Fh$ is non-dicritical, since the balanced equation has no poles
and a separatrix always exits by the Separatrix Theorem \cite{camacho1982}. For the dicritical case,
let $E:(\MM,\DD)\rightarrow(\mathbb{C}^{2},0)$ be a minimal reduction process for $\Fh$.
The proof relies on the fact that through
each connected component of the invariant part of the divisor $\DD$   passes at least one separatrix of $\Fh$ (see
\cite{mol2002}).
Let us start by supposing that  $\DD$ contains only dicritical components with valence up to two. Then
$\hat{F}$ is devoid of poles and $\nu_{0}(\hat{F}) \geq 0$.
Aside from the trivial  case $\DD = D$ with $\val(D) = 0$, for which $\nu_{0}(\hat{F}) = 2$, we
get    $\nu_{0}(\hat{F}) > 1$ in the two following cases:
 \par \smallskip (i)  There exists $D \in \DD$ with $\val(D) = 1$;  here,
  $\hat{F}$ contains a separatrix crossing $D$ and there exists at least one isolated
separatrix crossing the invariant part of $\DD$.
\par \smallskip (ii) There exists $D \in \DD$ with $\val(D) = 2$; then $D$
  disconnects the invariant part of $\DD$ giving rise to two connected components, where we  find at least two isolated separatrices.

Now, we admit the existence of a dicritical component $D \subset \DD$
 of valence $\val(D) \geq 3$. In the course of the reduction process,  $D$ appears   as the result of an explosion at $0 \in \mathbb{C}^{2}$, at
 non-corner point or at a  corner point. Corresponding to these three cases, new sequences explosions will be needed starting  at, respectively,   $\val(D)$, $\val(D) - 1$ or $\val(D) - 2$  points of $D$. Each  of these points  gives rise to at least to one isolated separatrix
 with  algebraic multiplicity no less than that of any dicritical separatrix crossing $D$. Therefore,
 the negative contribution to $\nu_{0}(\hat{F})$ given by the $\val(D) - 2$ dicritical separatrices attached to $D$  will
 be neutralized by the contribution  of these isolated separatrices. Now, in order to see
 that $\nu_{0}(\hat{F}) > 1$, it suffices to take $D$ as the first dicritical component
 with $\val(D) \geq 3$ appearing in the reduction process. The extra separatrices we need are found using the arguments of
  cases (i) and (ii) above.
\end{proof}

\begin{rem}
In the non-dicritical case,   we may have $\nu_{0}^{*}(\hat{F}) = 0$, even if $0 \in \mathbb{C}^{2}$ is a singularity of
$\Fh$. For instance, the local foliation $\Fh$ given by the non-linearizable Poincaré-Dulac normal form
$\omega = (nx + \zeta y^{n}) dy - y dx$, where $\zeta \neq 0$ and $n \in \mathbb{N}$ with $n \geq 2$, has $y = 0$ as the unique separatrix, giving $\nu_{0}(\hat{F}) = 1$. Another example is given by the nilpotent singularity
by $\eta = d(y^2+x^4)+4x^2dy$, whose unique separatrix is $y = - x^{2}$.
\end{rem}

\begin{defn} Let  $\Fh$ be a local foliation at $(\mathbb{C}^{2},0)$ and
$E:(\MM,\DD)\rightarrow(\mathbb{C}^{2},0)$ be a minimal process of reduction of singularities.
If $D \subset \DD$ is an irreducible component, the  \emph{valuation of} $\Fh$
along $D$,
denoted by $\nu_{D}(\Fh)$, is the order of vanishing of $E^{*} \omega$ along $D$, where
$\omega$ is any $1-$form inducing $\Fh$.
\end{defn}

For instance, if $D$ arises from the first blow-up at $0 \in \mathbb{C}^{2}$,
then
\[ \nu_{D}(\Fh) = \begin{cases}
\nu_{0}(\Fh)  \text{ if } D \text{ is
non-dicritical} \smallskip \\
 \nu_{0}(\Fh) + 1 \text{ if } D \text{ is
dicritical}.
\end{cases}
\]

In much the same way, if $\hat{F}$ is a formal meromorphic function at $(\mathbb{C}^{2},0)$,
then, given some process of reduction of singularities $E:(\MM,\DD)\rightarrow(\mathbb{C}^{2},0)$,
we can define the valuation of $\hat{F}$ along $D$, denoted by $\nu_{D}(\hat{F})$,
as the order of vanishing of $E^{*}F = F \circ E$ along $D$.
The following fact also appears  in \cite{genzmer2007}:
\begin{prop}
\label{prop-valuation-2nd-type}
Let $\Fh$ be a foliation of second type having
$\hat{F}$ as a balanced equation of
separatrices.
Let $E:(\MM,\DD)\rightarrow(\mathbb{C}^{2},0)$ be a minimal process of reduction of singularities
for $\Fh$.
Then, for every component $D \in \mathcal{D}$, it holds
\[\nu_{D}(\hat{F}) = \begin{cases}
\nu_{D}(\Fh)+1 \text{ if } D \text{ is
non-dicritical} \smallskip \\
 \nu_{D}(\Fh) \text{ if } D \text{ is
dicritical}.
\end{cases}
\]
In particular, $\nu_{D}(\hat{F}) > 0$ for all $D \in \mathcal{D}$.
\end{prop}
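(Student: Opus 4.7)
The plan is to proceed by induction on the length of the minimal reduction of singularities $E:(\MM,\DD)\to(\mathbb{C}^{2},0)$, applying Proposition \ref{prop:Equa-Ba} as the base case at every infinitely near point.

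For the base case, let $D_{1}$ be a component arising from the first blow-up $E_{1}$ at $0\in\mathbb{C}^{2}$. A direct computation in the first blow-up chart shows that, for any formal $1$-form $\omega$ inducing $\Fh$, one has $\nu_{D_{1}}(\Fh)=\nu_{0}(\Fh)$ if $D_{1}$ is non-dicritical and $\nu_{D_{1}}(\Fh)=\nu_{0}(\Fh)+1$ if $D_{1}$ is dicritical, while the analogous computation for a formal meromorphic function yields $\nu_{D_{1}}(\hat{F})=\nu_{0}(\hat{F})$. Since $\Fh$ is of second type we have $\tau(\Fh)=0$, so Proposition \ref{prop:Equa-Ba} specializes to $\nu_{0}(\hat{F})=\nu_{0}(\Fh)+1$, and substituting gives the desired equality at $D_{1}$.

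For the inductive step, let $D\subset\DD$ arise from a blow-up at a point $p$ lying on the previously constructed divisor, and let $\pi:(M,p)\to(\mathbb{C}^{2},0)$ be the composition of blow-ups producing $p$. The germ $\Fh_{p}$ of $E^{*}\Fh$ at $p$ is again of second type, since any tangent saddle-node appearing in its reduction would also be tangent in the global reduction of $\Fh$. I would then produce a balanced equation $\hat{F}_{p}$ for $\Fh_{p}$ from $\hat{F}$ by choosing local coordinates $(u,v)$ at $p$ such that the (at most two) components of $\pi^{-1}(0)$ through $p$ are contained in $\{uv=0\}$, and writing $\pi^{*}\hat{F}=u^{\alpha}v^{\beta}\hat{F}_{p}$, where $\alpha$ and $\beta$ are the valuations of $\hat{F}$ along those components (one of them being zero if only a single component passes through $p$). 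The crux of the argument is to verify that this $\hat{F}_{p}$ satisfies Definition \ref{def-balanced-set} for $\Fh_{p}$: each irreducible factor of $\hat{F}_{p}$ is the local equation of an isolated separatrix or a curvet of $\Fh_{p}$, and the balance identity \eqref{eq:1-balance} is preserved for every dicritical component $D'$ created after $p$. The latter follows from the observation that components of $\pi^{-1}(0)$ through $p$ are not neighbors of $D'$ in the reduction graph of $\Fh_{p}$, so the valence of $D'$ is the same whether computed inside the reduction of $\Fh$ or of $\Fh_{p}$.

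Once $\hat{F}_{p}$ is known to be a balanced equation, the base case applied to $\Fh_{p}$, for which $D$ is now a first-blow-up component, gives $\nu_{D}(\hat{F}_{p})=\nu_{D}(\Fh_{p})+\epsilon_{D}$ with $\epsilon_{D}=1$ or $0$ according to $D$ being non-dicritical or dicritical. Since $D$ is disjoint from the components of $\pi^{-1}(0)$ through $p$, the factors $u^{\alpha}v^{\beta}$ contribute trivially along $D$, so $\nu_{D}(\hat{F})=\nu_{D}(\hat{F}_{p})$ and $\nu_{D}(\Fh)=\nu_{D}(\Fh_{p})$, and the identity transfers to $\Fh$, completing the induction. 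The final positivity $\nu_{D}(\hat{F})>0$ is then immediate: for non-dicritical $D$, $\nu_{D}(\hat{F})=\nu_{D}(\Fh)+1\geq1$; for dicritical $D$, we have $\nu_{D}(\hat{F})=\nu_{D}(\Fh)$, and this is at least $1$ because the blow-up creating $D$ is performed at a non-reduced singularity, forcing the algebraic multiplicity at that point, and hence $\nu_{D}(\Fh)$, to be strictly positive. The main obstacle is the combinatorial verification that the meromorphic function $\hat{F}_{p}$ produced in the inductive step really meets the balance condition for $\Fh_{p}$; everything else is bookkeeping.
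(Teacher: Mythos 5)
The paper itself does not prove this proposition --- it cites \cite{genzmer2007} --- but it indicates the intended argument: induction along the reduction tree via Lemma~\ref{lemma-balanced} and the recursion \eqref{purevaluation}. Your base case is correct and coincides with that argument at height one. The inductive step, however, rests on two false assertions. First, the function $\hat{F}_{p}=\pi^{*}\hat{F}/(u^{\alpha}v^{\beta})$, with $\alpha,\beta$ the full valuations of $\hat{F}$ along the divisor components $D_{1}',D_{2}'$ through $p$, is \emph{not} a balanced equation for $\Fh_{p}$: seen from $p$, the germs of the non-dicritical components among $D_{1}',D_{2}'$ are separatrices of $\Fh_{p}$ (isolated or curvets) and must be counted. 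Iterating Lemma~\ref{lemma-balanced}, the correct local balanced equation is $\hat{G}_{p}:=u^{\epsilon(D_{1}')}v^{\epsilon(D_{2}')}\hat{F}_{p}$, with $\epsilon$ as in \eqref{definition-epsilon}; so the ``crux'' you propose to verify is attempted for the wrong object and fails (at a reduced corner carrying no separatrix of $\Fh$, your $\hat{F}_{p}$ is a unit while the balanced equation is $uv$). Second, the identities $\nu_{D}(\hat{F})=\nu_{D}(\hat{F}_{p})$ and $\nu_{D}(\Fh)=\nu_{D}(\Fh_{p})$ are both wrong: the pullback of $u^{\alpha}v^{\beta}$ under the blow-up at $p$ vanishes to order $\alpha+\beta$ along $D$, and the true recursions are $\nu_{D}(\hat{F})=\nu_{p}(\hat{F}_{p})+\alpha+\beta$ and $\nu_{D}(\Fh)=\nu_{p}(\Fh_{p})+(1-\epsilon(D))+\sum_{D'\in V(p)}\nu_{D'}(\Fh)$, the latter being exactly the paper's formula \eqref{purevaluation} for the usual valuation. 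Your claim that these contributions are ``trivial'' contradicts the very conclusion being proved, namely $\nu_{D'}(\hat{F})>0$ for every $D'$.

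What saves the statement (but not the proof as written) is that the errors cancel in the difference. With the correct objects, $\nu_{D}(\hat{F})-\nu_{D}(\Fh)=\bigl(\nu_{p}(\hat{G}_{p})-\epsilon(D_{1}')-\epsilon(D_{2}')+\alpha+\beta\bigr)-\bigl(\nu_{p}(\Fh_{p})+1-\epsilon(D)+\sum_{D'\in V(p)}\nu_{D'}(\Fh)\bigr)$; substituting $\nu_{p}(\hat{G}_{p})=\nu_{p}(\Fh_{p})+1$ from Proposition~\ref{prop:Equa-Ba} (using $\tau(\Fh_{p})=0$, which your second-type argument for $\Fh_{p}$ correctly justifies) and the induction hypothesis $\alpha=\nu_{D_{1}'}(\Fh)+\epsilon(D_{1}')$, $\beta=\nu_{D_{2}'}(\Fh)+\epsilon(D_{2}')$ gives exactly $\epsilon(D)$. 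The repair is therefore routine, but it requires the balanced equation furnished by Lemma~\ref{lemma-balanced} and the genuine recursion formulas in place of the ones you wrote; the ``bookkeeping'' you set aside is precisely where the proof lives.
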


Let  $\Fh$ be a foliation, not necessarily of second type, with balanced equation
of separatrices
$\hat{F}$ and minimal reduction process
$E:(\MM,\DD)\rightarrow(\mathbb{C}^{2},0)$. Following what was done for the algebraic
 multiplicity, we   associate to $\Fh$ an invariant $\nu^{*}_{D}(\Fh)$ called  \emph{pure
 valuation} along $D \in \DD$. As before, this would  be   the
 valuation $\nu_{D}(\Gh)$ for some foliation $\Gh$ of second type sharing with
 $\Fh$ the same reduction structure and the same balanced equation, if such a  $\Gh$
 existed. Since this is not guaranteed, we appeal to
  the following recursive definition:
if $D$ arises from the first blow-up at $0 \in \mathbb{C}^{2}$, then
\[\nu^{*}_{D}(\Fh) = \begin{cases}
\nu_{0}^{*}(\Fh)  \text{ if } D \text{ is
non-dicritical} \smallskip \\
 \nu_{0}^{*}(\Fh) + 1 \text{ if } D \text{ is
dicritical}.
\end{cases}
\]
In order to proceed to the inductive step, write $E = E_{k} \circ \cdots \circ E_{1}$ the divided blow-up,
where   $k \geq 2$, and, for $1 \leq j \leq k$,
   let $D_{j}$ be the divisor associated to $E_{j}$ and $\Fh_j
    = (E_{j} \circ \cdots \circ E_{1})^{*} \Fh$ be the strict transform foliation. Let $1< i < k$ and
suppose that the pure valuation  is defined for irreducible components
  appearing at the height $i$. Suppose that $E_{i+1}$ is a blow-up at a point
  $p \in \DD_{i} = \cup_{j=1}^{i}D_{j}$.  We define:
\begin{equation}
\label{purevaluation}
\nu^{*}_{D_{i+1}}(\Fh) = {\nu}^{*}_{p}( \Fh_{i}) + (1  - \epsilon(D_{i+1})) +
\sum_{D \in V(p)}  \nu^{*}_{D}(\Fh),
\end{equation}
where $V(p)$ stands for the set of irreducible components of $\DD_{i}$ containing $p$ and, for a component $D \subset \DD$,
\begin{equation}
\label{definition-epsilon}
\epsilon (D ) = \begin{cases}
1  \text{ if } D \text{ is
non-dicritical} \smallskip \\
 0 \text{ if } D \text{ is
dicritical}\, .
\end{cases}
\end{equation}
Notice that, as a consequence of Lemma \ref{lemma_pure_multiplicity}, except for the case of a non-dicritical foliation
with a unique smooth separatrix, $\nu^{*}_{D}(\Fh) > 0$ for every $D \in \DD$.
Formula \eqref{purevaluation} is satisfied by the usual valuation. This is the
key for the inductive step in the proof of Proposition~\ref{prop-valuation-2nd-type} in \cite{genzmer2007}.
The same arguments therein give us a new   version for this result:
\begin{prop}
\label{prop-valuation-general}
Let $\Fh$ be a foliation having
$\hat{F}$ as a balanced equation of
separatrices.
Let $E:(\MM,\DD)\rightarrow(\mathbb{C}^{2},0)$ be a minimal process of reduction of singularities
for $\Fh$.
Then, for every component $D \in \mathcal{D}$, it holds
\[\nu_{D}(\hat{F}) = \begin{cases}
\nu_{D}^{*}(\Fh)+1 \text{ if } D \text{ is
non-dicritical} \smallskip \\
 \nu_{D}^{*}(\Fh) \text{ if } D \text{ is
dicritical}.
\end{cases}
\]
In particular, $\nu_{D}(\hat{F}) > 0$ for all $D \in \mathcal{D}$.
\end{prop}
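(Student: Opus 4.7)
The proof is an induction on the height of the component $D$ in the reduction tree of $E$, following the blueprint of the proof of Proposition~\ref{prop-valuation-2nd-type} in \cite{genzmer2007} with the usual valuation replaced by the pure valuation $\nu^{*}$.

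For the base case, when $D = D_1$ is the exceptional divisor of the first blow-up at $0$, the transformation rule for a meromorphic function under blow-up gives $\nu_{D_1}(\hat{F}) = \nu_0(\hat{F})$, and by the definition of the pure algebraic multiplicity, $\nu_0(\hat{F}) = \nu_0^{*}(\Fh) + 1$. Comparing with the base clause of the recursive definition of $\nu^{*}_{D_1}(\Fh)$ stated just before \eqref{purevaluation}, one verifies the claim in both the dicritical and non-dicritical cases.

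For the inductive step, assume the identity is known for every component of $\DD_i$ and let $D_{i+1}$ be produced by $E_{i+1}$ at a point $p \in \DD_i$. Set $\hat{F}_i = (E_i \circ \cdots \circ E_1)^{*}\hat{F}$. The blow-up rule yields $\nu_{D_{i+1}}(\hat{F}) = \nu_p(\hat{F}_i)$, and the local decomposition
\[
\hat{F}_i \; = \; \prod_{D \in V(p)} u_D^{\nu_D(\hat{F})} \cdot G_p,
\]
where $u_D$ is a local equation for $D$ at $p$ and $G_p$ collects the strict transforms of the separatrix factors of $\hat{F}$ passing through $p$, implies
\[
\nu_{D_{i+1}}(\hat{F}) \; = \; \sum_{D \in V(p)} \nu_D(\hat{F}) \; + \; \nu_p(G_p).
\]
Plugging in the inductive hypothesis $\nu_D(\hat{F}) = \nu^{*}_D(\Fh) + \epsilon(D)$ for $D \in V(p)$ and matching against \eqref{purevaluation}, the inductive step reduces to the core identity
\[
\nu_p(G_p) \; = \; \nu^{*}_p(\Fh_i) \; + \; 1 \; - \; \#\{D \in V(p) : D \text{ non-dicritical}\}.
\]

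The main obstacle is establishing this core identity. The claim is that $G_p$, enlarged by the local equations of the non-dicritical components of $V(p)$ and rebalanced by additional curvets compensating for the valence drops $\val(D) - \val_i(D)$ of the dicritical components of $\DD$ lying above $p$, is a balanced equation of separatrices for the germ $\Fh_i$ at $p$; combined with the definition of pure algebraic multiplicity applied to $\Fh_i$ at $p$, this delivers the identity. The verification is combinatorial in nature: the isolated separatrices of $\Fh_i$ at $p$ split as isolated separatrices of $\Fh$ through $p$ together with the non-dicritical components of $V(p)$, while for each dicritical component $D$ above $p$ the drop $\val(D) - \val_i(D)$ equals the number of elements of $V(p)$ meeting $D$ in $\DD$, and the corresponding extra curvets are smooth at $p$ and contribute multiplicity one each, precisely balancing the correction brought in by the non-dicritical components of $V(p)$. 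Since none of this bookkeeping uses the absence of tangent saddle-nodes, the arguments of \cite{genzmer2007} carry over \emph{verbatim}. The strict positivity $\nu_D(\hat{F}) > 0$ then follows from the identity, from Lemma~\ref{lemma_pure_multiplicity}, and from the strictly positive contributions propagated through the tree by \eqref{purevaluation}.
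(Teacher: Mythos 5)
Your proof is correct and follows essentially the same route as the paper: an induction on the height of $D$ in the reduction tree, with the base case supplied by the definition of $\nu^{*}_{0}(\Fh)$ and the inductive step keyed on the recursion \eqref{purevaluation}, the core identity being precisely the iterated form of Lemma~\ref{lemma-balanced}. The paper only delegates this induction to the proof of Proposition~\ref{prop-valuation-2nd-type} in \cite{genzmer2007}; you have written out that same argument explicitly.
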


When $\Fh$ is non-dicritical and of  second type, it is well known (see \cite{mattei2004}, Corollary 3.1.10)
that $\Fh$ and $\d\hat{F}$ share the same process of reduction.
However, this fails to be true when $\Fh$ is dicritical. For instance,
the quasi-radial foliation given by $px\d y-qy\d x$, with $p,\ q\in\mathbb{N}^{*}$
relatively prime, admits a balanced equation whose differential is
  $\d\hat{F}=\d\left(xy\right)=x\d y+y\d x$. The latter is
reduced whereas $px\d y-qy\d x$ needs a reduction process attached
to the Euclid's algorithm of the pair $\left(p,q\right)$. Nevertheless,
we can establish  the following link between the two reduction
processes:

\begin{prop}
Let $\Fh$ be a foliation  
having $\hat{F}$  as a balanced
equation of separatrices. Let $E$ be the reduction process of $\Fh$. Then:
\begin{enumerate}[(a)]
 \item  Any component $D \subset E^{-1}\left(0\right)$  is invariant
by $E^{*}\d\hat{F}$.
 \item  Any singularity of $E^{*}\d\hat{F}$ is reduced except possibly along
the dicritical components of $\Fh$, where $E^{*}\d\hat{F}$ might
have dicritical singularities.
\end{enumerate}
\end{prop}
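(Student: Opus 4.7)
The plan is to deduce (a) directly from the valuation estimate of Proposition~\ref{prop-valuation-general}, and to establish (b) by a local case-by-case analysis at each point of $\DD$, exploiting the fact that in the reduction of $\Fh$ the separatrices are smooth, mutually disjoint, transverse to $\DD$, and disjoint from the corners of $\DD$. For (a), Proposition~\ref{prop-valuation-general} yields $\nu_{D}(\hat{F})>0$ for every irreducible component $D\subset\DD$, so $D$ is contained in the zero divisor of $E^{*}\hat{F}$. Writing locally $E^{*}\hat{F}=g^{m}h$ with $g$ a reduced local equation of $D$ and $m=\nu_{D}(\hat{F})\geq 1$, we get
\[
E^{*}\d\hat{F}=g^{m-1}\bigl(mh\,\d g+g\,\d h\bigr),
\]
a $1$-form visibly having $\{g=0\}$ as a leaf, so $D$ is invariant by $E^{*}\d\hat{F}$.

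For (b), fix $p\in\DD$. By the separation properties of the reduction process, the germ at $p$ of the divisor of $E^{*}\hat{F}$ is supported on at most two smooth curves, each of which is either an irreducible component of $\DD$ through $p$ or the local branch of a single separatrix of the balanced equation; the latter occurs only at non-corner points of $\DD$. Choosing coordinates $(x,y)$ putting these curves (when present) on the axes, we obtain $E^{*}\hat{F}=x^{m}y^{n}u$ with $u$ a holomorphic unit, where the exponent attached to a component of $\DD$ is the positive integer granted by Proposition~\ref{prop-valuation-general}, the exponent attached to an isolated separatrix is $+1$, and the exponent attached to a curvet is the coefficient $a_{D,C}\in\mathbb{Z}\setminus\{0\}$ prescribed by the balanced equation; if only one such curve passes through $p$, the other exponent is $0$.

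When one of the exponents is $0$, a direct computation shows that $E^{*}\d\hat{F}$ is non-singular at $p$. Otherwise, both $m,n\in\mathbb{Z}\setminus\{0\}$ and
\[
E^{*}\d\hat{F}=x^{m-1}y^{n-1}\bigl(myu\,\d x+nxu\,\d y+xy\,\d u\bigr),
\]
which has at $p$ a non-degenerate singularity with eigenvalue ratio $-m/n$. The tally of the previous paragraph shows that $-m/n\in\mathbb{Q}^{+}$ can occur only when $m=\nu_{D}(\hat{F})>0$ for a dicritical component $D$ and $n=a_{D,C}<0$ for a curvet constituting a pole of $\hat{F}$; in that event $E^{*}\hat{F}$ is a meromorphic first integral whose level sets $x^{m}y^{a_{D,C}}=c$ yield infinitely many analytic arcs through $p$, so the singularity is dicritical in the sense of the paper. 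The main obstacle I foresee is organizational: one must enumerate all the configurations (non-corner with or without an attached separatrix; corner of two components of either character) and confirm, using Proposition~\ref{prop-valuation-general}, that the positive-ratio case is excluded in every configuration except the one explicitly permitted by the statement.
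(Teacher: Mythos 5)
Your proposal is correct and follows essentially the same route as the paper: both arguments rest on Proposition~\ref{prop-valuation-general} to get $\nu_{D}(\hat F)>0$ for every component (hence invariance), and both reduce (b) to the local monomial form of $E^{*}\hat F$ at corners, at zeros, and at poles of the strict transform, identifying the pole--divisor intersection points on dicritical components as the only non-reduced (dicritical) singularities. The case enumeration you flag as a remaining organizational task is in fact already complete in your tally, since corners and zero-branches always give two positive exponents while a negative exponent can only come from a pole curvet attached to a dicritical component.
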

\begin{proof}
The proof    relies on  Proposition~\ref{prop-valuation-general}.
At a point $p \in D$
there are local coordinates $\left(x,y\right)$  such that the pull-back
of $\hat{F}$ is written in the following way:

\par (1)  if $p$ is neither a zero nor a pole of the strict transform of $\hat{F}$,
then $ (E^{*}\hat{F} )_{p}=x^{\nu_{D} (\hat{F} )}$,
where $x$ is a local equation for $D$ near $p$.

\par (2) if $p$ is a corner, say $p=D_{1}\cap D_{2}$, then $ (E^{*}\hat{F} )_{p}=x^{\nu_{D_{1}} (\hat{F} )}y^{\nu_{D_{2}} (\hat{F})}$,
where $x$ and $y$ are local equations for $D_{1}$ and
$D_{2}$, respectively.

\par (3)  if $p$ is either a zero or a pole of the strict transform of $\hat{F}$,
then $(E^{*}\hat{F})_{p}$ is either $x^{\nu_{D} (\hat{F})}y$
or $x^{\nu_{D}(\hat{F})}/y$, where $x$ is a local
equation for $D$   and $y$ a local equation for the zero or the
pole.

The combination of the above remarks with the upcoming lemma yields the proposition.
\begin{lem}
Let $\hat{H}$ be any meromorphic function, $E$ be any
blowing-up process and $D$ be any component of the exceptional divisor.
Then $D$ is dicritical for $E^{*}\d\hat{H}$ if and only if $\nu_{D}(\hat{H})=0$.
\end{lem}

Indeed, according to Proposition~\ref{prop-valuation-general}, if $D \subset E^{-1}\left(0\right)$ then   $\nu_{D}(\hat{F})$
is strictly positive. Thus, any
component of the exceptional divisor  of $E$ is invariant. Moreover,
  around the points of intersection of the poles of $\hat{F}$ with the divisor,
there are local coordinates $ (x,y )$ such that $ (E^{*}\hat{F} )= x^{N}/y$,
which is a dicritical singularity for $E^{*} \d \hat{F}$, reduced after $N$ blow-ups.
\end{proof}


Even when $\Fh$ is convergent, it is not enough to consider a balanced
equation formed only by convergent invariant curves. Of course,
formal separatrices may appear as weak invariant curves of saddle-nodes.
For instance,
  a balanced equation for the Euler
singularity is given by
the formal equation
\[
\hat{F}=x \left(y-\sum_{n\geq1} (n-1 )!x^{n}\right) .
\]
Evidently, for a convergent foliation $\Fh$, all possible formal separatrices are isolated, the ones in $\textup{Dic}(\Fh)$
being all convergent.

One of the main features of balanced equations is their good behavior
under blow-ups:

\begin{lem}
\label{lemma-balanced}Let $\hat{F}$ be a balanced equation of separatrices for $\Fh$.
Let $\pi: (\widetilde{\mathbb{C}^{2}},D )\to (\mathbb{C}^{2},0 )$
be the standard blow-up at the origin. Then, for any point $p\in D$,
singular for $\pi^{*}\Fh$, the germ at $p$ of the meromorphic function
\[
\frac{\hat{F}\circ\pi}{h_{p}^{\nu_{0} (\hat{F} )-\epsilon (\Fh)}} \ ,
\]
where $h_{p}$ is a local equation of $D$ and $\epsilon(\Fh) = \epsilon(D)$ is defined in \eqref{definition-epsilon},
is a balanced equation
for the germ  of $\pi^{*}\Fh$  at $p$.
\end{lem}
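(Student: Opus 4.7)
The plan is to compute the divisor of the proposed quotient explicitly and then verify the axioms of Definition~\ref{def-balanced-set} applied to the germ $(\pi^{*}\Fh,p)$. First, write $\hat{F}$ as a finite formal product $\hat{F} = \prod_{i} f_{i}^{a_{i}}$ over its irreducible separatrix factors, with integer exponents $a_{i}$ equal to $1$ on the isolated separatrices and equal to the dicritical coefficients $a_{D',f_{i}}$ on the curvets. In local coordinates at $p$ with $D = \{h_{p} = 0\}$, the pull-back of each factor satisfies $f_{i} \circ \pi = h_{p}^{\nu_{0}(f_{i})} \widetilde{f}_{i}$ whenever the strict transform $\widetilde{f}_{i}$ passes through $p$, and $f_{i} \circ \pi$ is a local unit times $h_{p}^{\nu_{0}(f_{i})}$ otherwise. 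Combining these with the additivity $\nu_{0}(\hat{F}) = \sum_{i} a_{i} \nu_{0}(f_{i})$ and dividing by $h_{p}^{\nu_{0}(\hat{F}) - \epsilon(\Fh)}$ yields
\[
\frac{\hat{F}\circ\pi}{h_{p}^{\nu_{0}(\hat{F}) - \epsilon(\Fh)}} \;=\; h_{p}^{\epsilon(\Fh)} \cdot u \cdot \prod_{\widetilde{f}_{i} \ni p} \widetilde{f}_{i}^{\,a_{i}},
\]
whose divisor at $p$ equals $\epsilon(\Fh)[D] + \sum_{\widetilde{f}_{i} \ni p} a_{i} [\widetilde{f}_{i}]$.

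The second step is to read off this divisor as a balanced equation for $(\pi^{*}\Fh,p)$. The isolated separatrices of the germ at $p$ are the strict transforms of the isolated separatrices of $\Fh$ meeting $p$, together with the germ of $D$ in the non-dicritical case; both appear with coefficient $1$ (from $a_{i} = 1$ on $\mathcal{I}(\Fh)$ and $\epsilon(\Fh) = 1$ respectively). The remaining factors $\widetilde{f}_{i}^{\,a_{i}}$ correspond to curvets of $(\pi^{*}\Fh,p)$ on the local dicritical components, which are exactly the dicritical components of $\Fh$ lying above $p$, together with the germ of $D$ itself when $\Fh$ is dicritical along $D$.

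The final step is to verify the balancing identity $\sum_{C \in \textup{Curv}(D')} a_{D',C} = 2 - \val(D')$ at each local dicritical component $D'$. For $D'$ lying strictly above $p$, every global curvet on $D'$ has strict transform through $p$ (as $D'$ is located above $p$ in the reduction tree), so the sum in the quotient inherits the global identity $2 - \val(D')$ from $\hat{F}$; the local and global valences of $D'$ match once the germ of $D$ is incorporated in the reduction divisor of $(\pi^{*}\Fh,p)$, and in the non-dicritical case the additional contribution of $D$ itself with coefficient $\epsilon(\Fh) = 1$ compensates the valence shift whenever $D'$ is adjacent to $D$ in $\DD$. The delicate case --- and the principal obstacle --- is $D' = D$ when $\Fh$ is dicritical along $D$: here one must distribute the single global balancing $\sum_{C} a_{D,C} = 2 - \val(D)$ over the partition $\val(D) = \sum_{q} \val_{q}(D)$ indexed by the singular points of $\pi^{*}\Fh$ on $D$. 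This is handled by the same inductive bookkeeping used in the proof of Proposition~\ref{prop-valuation-general}, exploiting that only finitely many $a_{D,C}$ are non-zero and that the balanced equation can always be rearranged so that its restriction to each singular point on $D$ satisfies the local balancing.
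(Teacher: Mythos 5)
Your divisor computation and your treatment of the non-dicritical case follow essentially the same route as the paper: pull back $\hat F$ factor by factor, divide by $h_{p}^{\nu_{0}(\hat F)-\epsilon(\Fh)}$, and check the balancing identities, with the germ of $D$ (coefficient $1$) either joining the isolated separatrices or compensating the drop by one of the local valence of the adjacent dicritical component $D'$. (Your second step asserts that the germ of $D$ is always an isolated separatrix when $D$ is invariant, which contradicts your own third step, where you let it compensate a valence shift; that is only a slip of exposition.)

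The genuine problem is your ``delicate case'' $D'=D$ with $D$ dicritical. The germ of $D$ at $p$ is \emph{not} a component of the reduction divisor of the germ $(\pi^{*}\Fh)_{p}$: that divisor consists only of the exceptional curves created by the blow-ups performed at and above $p$, whereas $D$ germifies at $p$ to an ordinary smooth, non-invariant curve. Hence there is no local balancing condition attached to $D$, and none is needed: in the minimal reduction the curvets of $\textup{Curv}(D)$ are attached to $D$ transversally at non-corner, non-singular points, so the finitely many of them occurring as factors of $\hat F$ have strict transforms avoiding every singular point $p$ of $\pi^{*}\Fh$ on $D$; at $p$ they contribute only units times powers of $h_{p}$. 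Moreover, the ``distribution'' you propose is arithmetically impossible: if each singular point $q\in D$ had to carry a local balancing $2-\val_{q}(D)$ with $\sum_{q}\val_{q}(D)=\val(D)$, the total over $k$ singular points would be $2k-\val(D)$, not $2-\val(D)$. The appeal to the bookkeeping of Proposition~\ref{prop-valuation-general} therefore cannot repair this step; what is needed is the observation above, together with the fact that the component of the divisor adjacent to a dicritical $D$ is never dicritical, so no local valence of a component lying above $p$ is affected by the removal of $D$ --- which is how the paper disposes of the dicritical case in one line.
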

\begin{proof} We examine separately the non-dicritical and the dicritical cases.

 \smallskip
\par \noindent \underline{\it{First case:}} \
 $D$ is non-dicritical. We consider the
 reduction of singularities of $\pi^{*}\Fh$ at $p$ as part of that of $\Fh$.
From the point of view of $\pi^{*}\Fh $,  the germ of $D$ at $p$ is no longer part of the divisor,   turning into a separatrix.
The matter is to decide whether  it is an isolated or a dicritical separatrix.
Let $D^{\prime}$ be the component of the desingularization divisor of $\pi^{*}\Fh$  intersecting $D$.
If $D^{\prime}$   is
non-dicritical, then the germ of $D$   is
an isolated separatrix for $\pi^{*}\Fh$. Thus,
\begin{equation}
\label{bal-eq-blowup}
\frac{\hat{F}\circ\pi}{h_{p}^{\nu_{0}(\hat{F})}} \, h_{p}
\end{equation}
is a balanced equation for $ \pi^{*}\Fh$  at $p$. On the other hand, if $D^{\prime}$
is dicritical, then its valence   as a dicritical component of $ \pi^{*}\Fh$ at $p$ is  one unit less
 its valence   as a dicritical component of $\Fh$. Thus, in view of (\ref{eq:1-balance}),
equation \eqref{bal-eq-blowup}
is again a balanced equation for $\pi^{*}\Fh$ at $p$.

 \smallskip
\par \noindent \underline{\it{Second case:}} \
$D$ is dicritical.
Then, by the definition of reduction of singularities, the component $D^{\prime}$ touching $D$
in the   reduction of singularities of  $\Fh$ cannot be dicritical. Since
$D$ is not $\pi^{*}\Fh$-invariant, 
$ \hat{F}\circ\pi / h_{p}^{\nu_{0} (F )}$
is a balanced equation of $\pi^{*}\Fh$ at $p$.
 \end{proof}

This lemma is the key ingredient for most of the properties to be proved
in this article, allowing us to reason inductively on the length
of the  reduction process.

\section{Polar intersection and polar excess}

\label{section-polar-excess}

Let $\hat{\eta}$ be a formal meromorphic $1-$form defined near a
point $p$ of a complex surface. It  can  be regarded  as a $1-$form at $(\mathbb{C}^{2},0)$ by taking analytic
coordinates $ (x,y)$ for which $p = (0,0)$, so that
 \[\hat{\eta} \ =\ \frac{\hat{\omega}}{H} \ = \ \frac{P\d x+Q\d y}{H},\]
where $P,\ Q$ and $H$ are formal functions in $\mathbb{C}\left[\left[x,y\right]\right]$.
For  $(a:b) \in \mathbb{P}^{1}_{\mathbb{C}}$, the \emph{polar curve} of $\hat{\eta}$ with
respect to $(a:b)$ is the curve $\mathcal{P}_{(a:b)}^{\hat{\eta}}$ with formal meromorphic  equation
$(a P + b Q) / H$. In the convergent case, when $a \neq 0$, the points of  $\mathcal{P}_{(a:b)}^{\hat{\eta}}$ outside $(H)_0$ are   those
where $\hat{\eta}$ defines a tangent line with inclination $b/a$.
Now, suppose that  $\hat{B}$ is an irreducible curve invariant by $\hat{\eta}$
such that $\hat{B} \nsubseteq (\hat{\eta})_{\infty}= (H )_{0}$, having
  $\gamma\left(t\right)$ as a formal Puiseux parametrization.
  We   calculate the   intersection number
\[
   (\mathcal{P}_{ (a:b)}^{\hat{\eta}},\hat{B} )_{p} = \textup{ord}_{t=0}\left(\frac{aP+bQ}{H}\circ\gamma\right),
\]
which does not depend on the choice of coordinates $(x,y)$. This enables us to set the following definition:
\begin{defn}
\label{def-polar-intesection}
The \emph{polar intersection number} of  $\hat{\eta}$ and $\hat{B}$ at $p$ is the integer
\[(\mathcal{P}^{\hat{\eta}},\hat{B} )_{p}  = (\mathcal{P}_{ (a:b )}^{\hat{\eta}},\hat{B} )_{p}\]
obtained  for a generic point  $(a:b) \in \mathbb{P}^{1}_{\mathbb{C}}$.
\end{defn}
This  is an adaptation, for formal meromorphic forms, of the definition in   \cite{cano2015}.
Clearly,  polar intersection numbers are  well defined,
independent  both on the   coordinates and  on  the  choice
of the Puiseux parametrization of $B$.

Polar intersection numbers have a nice behavior under blow-ups:
\begin{lem}
\label{lemma-transform}
Let $\hat{\eta}$ be a formal meromorphic at $p \in \mathbb{C}^{2}$  form  and
$\hat{B}$ be an irreducible invariant curve  as in the previous definition.
Let $\pi: (\widetilde{\mathbb{C}^{2}},D ) \to (\mathbb{C}^{2},p)$ be
 the standard blow-up at $p$. Denote by $\tilde{B}$
the strict transform of $\hat{B}$ and $q=\tilde{B}\cap D$. Then
\[
 (\mathcal{P}^{\pi^{*}\hat{\eta}},\tilde{B})_{q} = (\mathcal{P}^{\hat{\eta}},\hat{B} )_{p}+\nu_{q} (\tilde{B}).
\]
\end{lem}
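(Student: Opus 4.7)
The plan is a direct computation in local coordinates, leveraging the invariance of $\hat{B}$ under $\hat{\eta}$ in much the same spirit as in \cite{cano2015}, but adapted to meromorphic forms.

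First, I would choose analytic coordinates $(x,y)$ centered at $p$ so that the $y$-axis is not contained in the tangent cone of $\hat{B}$. Such coordinates exist after a generic linear change, and they guarantee that $q$ lies in the standard chart $(u,v) = (x, y/x)$ of the blow-up $\pi$. In these coordinates $\hat{B}$ admits a Puiseux parametrization $\gamma(t) = (t^m, y(t))$ with $m = \nu_p(\hat{B})$ and $\ord_t y(t) \geq m$. Writing $\hat{\eta} = (P\,\dd x + Q\,\dd y)/H$ and $P_\gamma(t) = P(\gamma(t))/H(\gamma(t))$, $Q_\gamma(t) = Q(\gamma(t))/H(\gamma(t))$, the definition yields $(\mathcal{P}^{\hat{\eta}}, \hat{B})_p = \min(\ord_t P_\gamma, \ord_t Q_\gamma)$.

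Next I would expand the pullback, getting
\[ \pi^*\hat{\eta} = \frac{(P+vQ)\,\dd u + uQ\,\dd v}{H\circ\pi}, \]
with $P,Q,H$ evaluated at $(u,uv)$. Setting $\alpha = \lim_{t\to 0} y(t)/t^m$, the point $q$ is $(0,\alpha)$ and, after translating $w = v-\alpha$, the strict transform is parametrized by $\tilde{\gamma}(t) = (t^m,\, y(t)/t^m - \alpha)$. Since $\pi\circ\tilde{\gamma} = \gamma$, evaluating the coefficients of $\pi^*\hat{\eta}$ on $\tilde{\gamma}$ gives
\[ \tilde{P}_{\tilde{\gamma}}(t) = P_\gamma(t) + (y(t)/t^m)\, Q_\gamma(t), \qquad \tilde{Q}_{\tilde{\gamma}}(t) = t^m\, Q_\gamma(t), \]
so that $(\mathcal{P}^{\pi^*\hat{\eta}}, \tilde{B})_q = \min(\ord_t \tilde{P}_{\tilde{\gamma}}, \ord_t \tilde{Q}_{\tilde{\gamma}})$.

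The crux is the tangency relation: invariance of $\hat{B}$ under $\hat{\omega} = P\,\dd x + Q\,\dd y$ gives $m t^{m-1} P(\gamma) + y'(t) Q(\gamma) = 0$, hence $P_\gamma = \psi\, Q_\gamma$ with $\psi(t) = -y'(t)/(m t^{m-1})$. Substituting and writing $\phi(t) = y(t)/t^m$, one finds $\tilde{P}_{\tilde{\gamma}} = (\psi + \phi)\, Q_\gamma$; a short expansion of $y(t) = \sum_k a_k t^k$ shows that $(\psi+\phi)(t) = (y - t y'/m)/t^m$ kills precisely the $t^m$-term, so its order equals $n^*-m$, where $n^* = \min\{k>m : a_k\neq 0\}$. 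Combined with $\nu_q(\tilde{B}) = \min(m, n^*-m)$ and $(\mathcal{P}^{\hat{\eta}},\hat{B})_p = \ord_t Q_\gamma$ (which follows from $\ord_t\psi \geq 0$), this yields the claimed identity.

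The main obstacle I expect is the bookkeeping around degenerate cases: when $\hat{B}$ is smooth with tangent along the $x$-axis (so $\alpha=0$) or, most delicately, when $\hat{B}$ is an actual line through $p$ and thus $y(t)/t^m$ is constant and $(\psi+\phi)\equiv 0$, corresponding to $n^* = \infty$ and $\nu_q(\tilde{B}) = m$. A separate verification in these boundary cases is needed, and one must also check that the polar-intersection convention, taken for a generic $(a:b) \in \mathbb{P}^{1}_{\mathbb{C}}$, does select the minimum of the orders of the coefficients in both the original and the blown-up settings.
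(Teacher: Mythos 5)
Your proof is correct and follows essentially the same route as the paper: compute the pullback in the standard chart, use the invariance relation to express $P\circ\gamma$ in terms of $Q\circ\gamma$, and compare orders of the two coefficients, with the genericity of $(a{:}b)$ turning each polar intersection into a minimum of orders. The only difference is cosmetic — you work with the explicit power series of $y(t)$ and translate by $\alpha$, which in fact handles the case $\tilde{B}\cap D \neq (0,0)$ a bit more carefully than the paper's own computation with $u(t)=y(t)/x(t)$, and your ``degenerate'' cases are already covered by the convention $\mathrm{ord}\,0=\infty$.
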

\begin{proof}
Let us fix coordinates $(x,y)$ and a Puiseux
parametrization $\gamma (t)= (x(t),y (t) )$ of $\hat{B}$
with $\nu_{p}(\hat{B})=\ord_{t=0}x(t)$. Let us write
$\pi (x,u)=(x,ux)$. In these coordinates, $\tilde{B}$
is parametrized by $\tilde{\gamma}(t)=(x(t),u(t)=y(t)/x(t))$.
We have
\[
\pi^{*}\hat{\eta}=\frac{1}{H(x,ux)}\left((P(x,ux)+uQ(x,ux))\d x+xQ(x,ux)\d u\right).
\]
For $(a:b) \in \mathbb{P}^{1}_{\mathbb{C}}$, we   make the following computation
\begin{eqnarray*}
(\mathcal{P}_{(a:b)}^{\pi^{*}\hat{\eta}},\tilde{B})_{q} & = & \textup{ord}_{t=0}\left(\frac{a\left(P\left(x,ux\right)+uQ\left(x,ux\right)\right)+bxQ\left(x,ux\right)}{H\left(x,ux\right)}\circ\tilde{\gamma}\right)\\
 & = & \ord_{t=0}\left(\frac{a\left(P\circ\gamma\left(t\right)+u\left(t\right)Q\circ\gamma\left(t\right)\right)+bx\left(t\right)Q\circ\gamma\left(t\right)}{H\circ\gamma\left(t\right)}\right)\\
 & = & \min\left\{\ord_{t=0}(P\circ\gamma\left(t\right)+u\left(t\right)Q\circ\gamma\left(t\right)),\ord_{t=0}(x\left(t\right)Q\circ\gamma\left(t\right))\right\}\\
 &  & -\ord_{t=0}H\circ\gamma\left(t\right).
\end{eqnarray*}
Now, since $\hat{B}$ is invariant, we have $P\circ\gamma(t) x^{\prime}\left(t\right)+Q\circ\gamma(t) y^{\prime}\left(t\right)=0$.
Noting that $\ord_{t=0}x(t) \leq \ord_{t=0}y(t)$,     this gives in particular that  $\ord_{t=0} Q\circ\gamma(t) \leq \ord_{t=0} P\circ\gamma(t)$ and therefore
$(\mathcal{P}_{(a:b)}^{\hat{\eta}},\hat{B})_{p} = \ord_{t=0} Q\circ\gamma(t) - \ord_{t=0} H\circ\gamma(t)$.
It is straightforward that
\begin{eqnarray*}
\ord_{t=0}(P\circ\gamma\left(t\right)+u\left(t\right)Q\circ\gamma\left(t\right)) & = & \ord_{t=0}\left(\frac{-u^{\prime}\left(t\right)x\left(t\right)}{x^{\prime}\left(t\right)}Q\circ\gamma\left(t\right)\right)\\
 & = & \ord_{t=0}u\left(t\right)+\ord_{t=0}Q\circ\gamma\left(t\right).
\end{eqnarray*}
We finally find
\begin{eqnarray*}
 (\mathcal{P}_{(a:b)}^{\pi^{*}\hat{\eta}},\tilde{B})_{q} & = & \min\left\{\ord_{t=0}u\left(t\right)+\ord_{t=0}Q\circ\gamma\left(t\right),\ord_{t=0}x\left(t\right)+\ord_{t=0}Q\circ\gamma\left(t\right)\right\}\\
 &  & -\ord_{t=0}H\circ\gamma\left(t\right)\\
 & = & \min\left\{\ord_{t=0}u\left(t\right),\ord_{t=0}x\left(t\right)\right\}+
 \ord_{t=0}Q\circ\gamma\left(t\right)-\ord_{t=0}H\circ\gamma\left(t\right)\\
 & = & \nu_{q}(\tilde{B})+(\mathcal{P}_{(a:b)}^{\hat{\eta}},\hat{B})_{p}.
\end{eqnarray*}
The proof is finished by taking generic $(a:b) \in \mathbb{P}^{1}_{\mathbb{C}}$.
\end{proof}

Our interest does not lie in the absolute values of   polar intersection numbers.  The idea is
to compare polar intersection numbers of a foliation and
 a ``reference foliation'' having the balanced equation as a first integral. This is the same
 principle developed in \cite{cano2015} for the non-dicritical case. There, however, the finiteness of the set of separatrices gives a straight   choice for this reference
 foliation.    More specifically, we define the
following invariant:

\begin{defn}
\label{def-polar-variation}
Let $\Fh$ be a germ of singular foliation at a point $p \in \mathbb{C}^{2}$ having
 $\hat{F}$ as a balanced equation of separatrices. Let $C \subset  (\hat{F})_{0}$ be
a union of zeros of $\hat{F}$ and consider the decomposition in irreducible
components $C=\cup_{i=1}^{n}C_{i}$. We define the \emph{polar excess index} of $\Fh$
with respect to $C$ as
\[
\var_{p} (\Fh,C) = \sum_{i=1}^{n}\var_{p}(\Fh,C_{i})=
\sum_{i=1}^{n} \left((\mathcal{P}^{\Fh},C_{i})_{p}-(\mathcal{P}^{\d\hat{F}},C_{i})_{p} \right),
\]
where $(\mathcal{P}^{\d\hat{F}},C_{i})_{p}$ refer to the polar intersection numbers of the formal foliation defined
by $\d\hat{F}$.
We also introduce a relative version of the polar  excess index:
if $f$ is the formal equation of the curve of separatrices
$C \subset  (\hat{F})_{0}$ above, instead of using the whole balanced equation $\hat{F}=G / H$ as done
 in the calculation of the polar excess, we take $f/H$. More precisely:
\[
\var_{p}^{\textup{rel}}(\Fh,C)=\sum_{i=1}^{n} \left((\mathcal{P}^{\Fh},C_{i})_{p}-(\mathcal{P}^{\d(f/H)},C_{i})_{p} \right).
\]
\end{defn}

\begin{example} \label{example-reduced}
{\rm Let us see what happens to $\var_{p}(\Fh,B)$ for a branch of separatrix $B$
 in the reduced case.
\par   (1) $\Fh$ is non singular at $p$. In this case, $B$ is the local leaf
at $p$ and it is easy to see that $\var_{p}(\Fh,B) = 0$.
\par   (2) $\Fh$ has a reduced non-degenerate singularity at $p$.
Now we can  take local coordinates such
that $p = (0,0)$ and $\Fh$ is given by
\[
x\left(1+u\left(x,y\right)\right)\d y+y\left(\lambda+v\left(x,y\right)\right)\d y.
\]
The function $\hat{F}=xy$ is a balanced equation.
If $B$ is either $x=0$ or $y=0$, it is  straight that
$(\mathcal{P}^{\Fh},B)_{p}  = 1$
and also $(\mathcal{P}^{\d\hat{F}},B)_{p} = 1$. Therefore   $\var_{p}(\Fh,B) = 0$.
\par   (3) $\Fh$ has a saddle-node at $p$.
We take the formal coordinates $(x,y)$ inducing  the normal form  of equation \eqref{eq-saddle-node-form}:
\[\left(\zeta x^{k}-k\right)y\d x+x^{k+1}\d y,\quad k\in\mathbb{N}^{*},\zeta\in\mathbb{C} .\]
Again, $\hat{F}=xy$ is a balanced equation. If $B$ is the strong separatrix, $x=0$, we find
$(\mathcal{P}^{\Fh},B)_{p}   = 1 $
and, thus, $\var_{p}(\Fh,B) = 0$. On the other hand, if $B$ is the weak separatrix, $y=0$, we have
$(\mathcal{P}^{\Fh},B)_{p} =   k+1$,
that results in $\var_{p}(\Fh,B) = k > 0$.
}
\end{example}

Keeping the notation of Lemma~\ref{lemma-transform}, we have:
\begin{prop}
\label{prop:Var1} Let $B$ be an irreducible component of $ (\hat{F})_{0}.$
Then
\[
\var_{p}(\Fh,B)=\var_{q}(\pi^{*}\Fh,\tilde{B})+\tau(\Fh)\nu_{p}(B) ,
\]
where $\tau(\Fh)$ is  tangency excess of $\Fh$ defined in equation \eqref{eq-tau}.
In particular,
\[ \var_{p}(\Fh,B) \geq \var_{q}(\pi^{*}\Fh,\tilde{B}),\]
equality
holding if and only if $\Fh$ is of second type.
\end{prop}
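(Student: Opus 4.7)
The plan is to treat the two polar intersection numbers appearing in
$\var_p(\Fh,B) = (\mathcal{P}^{\Fh},B)_p - (\mathcal{P}^{\d\hat F},B)_p$
separately, derive for each a blow-up transformation law under $\pi$, and subtract. The common tool in both cases is Lemma~\ref{lemma-transform}, combined with an explicit factorization showing how much of the pulled-back 1-form is ``absorbed'' by the divisor $D$.

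For the $\Fh$-side, I would write $\pi^*\omega = h^{\nu_D(\Fh)}\tilde\omega$, with $\omega$ a saturated defining form for $\Fh$, $\tilde\omega$ the corresponding one for $\pi^*\Fh$, and $h$ a local equation of $D$ at $q$. Using $(h,\tilde B)_q=\nu_p(B)$ together with Lemma~\ref{lemma-transform} gives
\[
(\mathcal{P}^{\Fh},B)_p = (\mathcal{P}^{\pi^*\Fh},\tilde B)_q + \nu_D(\Fh)\,\nu_p(B) - \nu_q(\tilde B),
\]
where $\nu_D(\Fh)=\nu_p(\Fh)+1-\epsilon(\Fh)$. For the $\d\hat F$-side, Lemma~\ref{lemma-balanced} produces a balanced equation $\hat F_q$ for $\pi^*\Fh$ at $q$ with $\hat F\circ\pi = h^\beta \hat F_q$ and $\beta=\nu_p(\hat F)-\epsilon(\Fh)$. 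Differentiating,
\[
\pi^*\d\hat F \;=\; h^{\beta-1}\bigl(\beta\hat F_q\,\d h + h\,\d\hat F_q\bigr),
\]
and the crucial observation is that $B\subset(\hat F)_0$ forces $\tilde B\subset(\hat F_q)_0$, so the coefficient $\beta\hat F_q$ vanishes identically along $\tilde B$. A short parametric computation in a Puiseux parametrization of $\tilde B$, combined once more with Lemma~\ref{lemma-transform}, then yields
\[
(\mathcal{P}^{\d\hat F},B)_p \;=\; (\mathcal{P}^{\d\hat F_q},\tilde B)_q + \beta\,\nu_p(B) - \nu_q(\tilde B).
\]

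Subtracting these two identities, the $\nu_q(\tilde B)$-terms cancel and the coefficient of $\nu_p(B)$ becomes $\nu_D(\Fh)-\beta = \nu_p(\Fh)-\nu_p(\hat F)+1$, which equals $\tau(\Fh)$ by Proposition~\ref{prop:Equa-Ba}. This is the asserted equality. The ``In particular'' clause is immediate: $\tau(\Fh)\geq 0$ and $\nu_p(B)\geq 1$, with equality exactly when $\tau(\Fh)=0$, that is, when $\Fh$ is of second type. The step I expect to be most delicate is the $\d\hat F$-computation: the strict transform of the level foliation $\{\d\hat F=0\}$ does \emph{not} coincide with the level foliation $\{\d\hat F_q=0\}$, since $\hat F\circ\pi$ and $\hat F_q$ differ by the factor $h^\beta$. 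The way around this is to avoid comparing the two foliations as such and instead exploit the invariance $\tilde B\subset(\hat F_q)_0$ to kill the obstructing term $\beta\hat F_q\,\d h$ directly along $\tilde B$. A parallel care is required when $\hat F$ has a zero of multiplicity $\geq 2$ along some dicritical separatrix, in which case one must interpret $\mathcal{P}^{\d\hat F}$ via the saturated defining form of $\{\d\hat F=0\}$ rather than $\d\hat F$ itself; the same invariance argument then absorbs the resulting saturation correction and the identity persists.
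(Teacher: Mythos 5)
Your argument is correct and follows essentially the same route as the paper's proof: transform the $\d\hat F$-side via Lemma~\ref{lemma-balanced} and the vanishing of $\hat F_q$ along $\tilde B$, transform the $\Fh$-side via the factorization $\pi^*\omega=h^{\nu_D(\Fh)}\tilde\omega$ (which the paper instead quotes from \cite[Proposition 3]{cano2015}), apply Lemma~\ref{lemma-transform} to both, and identify $\nu_p(\Fh)+1-\nu_p(\hat F)$ with $\tau(\Fh)$ via Proposition~\ref{prop:Equa-Ba}. The only deviations are cosmetic: you rederive the foliation-side identity rather than citing it, and your closing caveat about saturating $\d\hat F$ is unnecessary under the paper's literal Definition~\ref{def-polar-intesection}.
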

\begin{proof}
Consider   adapted coordinates $(x,u )$ for which $\pi (x,u)= (x,ux)$
and $B$ is given by $\gamma (t)=(x(t),y(t))$,
where $\nu_{p}(B)=\ord_{t=0}x(t)$. In order
to avoid  any confusion, let us denote by $\hat{F}_{p}$  the balanced
equation for $\Fh$ at $p$ and $\hat{F}_{q}$ the balanced equation for $\pi^{*}\Fh$ at $q$.
Following Lemma~\ref{lemma-balanced}, the relation between the two balanced
equations is
\[
\pi^{*}\hat{F}_{p}=x^{\nu_{p}(\hat{F}_{p})-\epsilon(\Fh)}\hat{F}_{q}.
\]
Taking derivatives in the above relation, we get
\[
\pi^{*}\d\hat{F}_{p}=
x^{\nu_{p}(\hat{F}_{p})-\epsilon(\Fh)}
\d\hat{F}_{q}+(\nu_{p}(\hat{F}_{p})-
\epsilon(\Fh))x^{\nu_{p}(\hat{F}_{p})-\epsilon(\Fh)-1} \hat{F}_{q}\d x.
\]
Since $\hat{F}_{q}$ vanishes along $\tilde{B}$, for fixed $(a:b) \in \mathbb{P}^{1}_{\mathbb{C}}$ one has
\[
 (\mathcal{P}_{(a:b)}^{\pi^{*}\d\hat{F}_{p}},\tilde{B})_{q}=\ord_{t=0}x(t)^{\nu_{p}(\hat{F}_{p})-\epsilon(\Fh)}+
 (\mathcal{P}_{(a:b)}^{\d\hat{F}_{q}},\tilde{B})_{q}.
\]
Now, taking $(a:b) \in \mathbb{P}^{1}_{\mathbb{C}}$ generic and   using Lemma~\ref{lemma-transform}, we obtain
\begin{equation}
(\mathcal{P}^{\d\hat{F}_{q}},\tilde{B})_{q}=(\mathcal{P}^{\d\hat{F}_{p}},B)_{p}-(\nu_{p}(\hat{F}_{p})-\epsilon(\Fh))
\nu_{p}(B)+\nu_{q}(\tilde{B}). \label{eq:3}
\end{equation}
Moreover, following \cite[Proposition 3]{cano2015} we also have
\begin{equation}
(\mathcal{P}^{\pi^{*}\Fh},\tilde{B})_{q}=(\mathcal{P}^{\Fh},B)_{p}-(\nu_{p}(\Fh)+1-\epsilon(\Fh))\nu_{p}
(B)+\nu_{q}(\tilde{B}).\label{eq:4}
\end{equation}
Combining \eqref{eq:3}, \eqref{eq:4} and Proposition~\ref{prop:Equa-Ba} yields
\begin{eqnarray*}
\var_{q}(\pi^{*}\Fh,\tilde{B}) & = & \var_{p}(\Fh,B)-(\nu_{p}(\Fh)+1-\nu_{p}(\hat{F}_{p}))\nu_{p}(B)\\
 & = & \var_{p}(\Fh,B)-\tau(\Fh)\nu_{p}(B).
\end{eqnarray*}
The final statement follows from $\tau(\Fh) \geq 0$, this number vanishing
if and only if $\Fh$ is a foliation of second type.
\end{proof}

\begin{prop}
\label{prop:VarPos}
Let $\Fh$ be a germ of formal foliation at $p \in S$ having $\hat{F}$ as a balanced equation of separatrices. If $B$ is   an irreducible component of $(\hat{F})_{0}$, then
\[
\var_{p}(\Fh,B)\geq0.
\]
\end{prop}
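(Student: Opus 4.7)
The plan is to proceed by induction on the length $n$ of the minimal reduction of singularities of $\Fh$ at $p$, using Proposition~\ref{prop:Var1} for the inductive step and Example~\ref{example-reduced} for the base case. The setup is justified because every irreducible component of $(\hat{F})_{0}$ passing through $p$ is, by construction of a balanced equation, a local separatrix of $\Fh$ at $p$; so $B$ is always a smooth invariant branch in the base case.

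For the base case $n = 0$, the foliation $\Fh$ is already desingularized at $p$. The three items of Example~\ref{example-reduced} cover every possibility: if $p$ is a regular point of $\Fh$, then $B$ is the local leaf and $\var_{p}(\Fh, B) = 0$; if $p$ is a non-degenerate reduced singularity, $B$ is one of the two coordinate separatrices and again $\var_{p}(\Fh, B) = 0$; if $p$ is a saddle-node, $B$ is either the strong or the weak separatrix and Example~\ref{example-reduced}(3) gives $\var_{p}(\Fh, B) \in \{0, k\}$ with $k \geq 0$. In all sub-cases, $\var_{p}(\Fh, B) \geq 0$.

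For the inductive step $n \geq 1$, let $\pi : (\widetilde{\mathbb{C}^{2}}, D) \to (\mathbb{C}^{2}, p)$ be the standard blow-up at $p$ and set $q = \tilde{B} \cap D$. If $q$ is a regular point of $\pi^{*}\Fh$, Example~\ref{example-reduced}(1) applied at $q$ gives $\var_{q}(\pi^{*}\Fh, \tilde{B}) = 0$ directly. If $q$ is singular, Lemma~\ref{lemma-balanced} provides a balanced equation $\hat{F}_{q}$ for $\pi^{*}\Fh$ at $q$, and since the normalizing factor $h_{p}^{\nu_{0}(\hat{F}) - \epsilon(\Fh)}$ is supported on $D$, which is distinct from $\tilde{B}$, the multiplicity of $\tilde{B}$ in $\hat{F}_{q}$ equals the (positive) multiplicity of $B$ in $\hat{F}$; hence $\tilde{B}$ remains an irreducible component of $(\hat{F}_{q})_{0}$. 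The reduction length of $\pi^{*}\Fh$ at $q$ is strictly smaller than $n$, so the inductive hypothesis yields $\var_{q}(\pi^{*}\Fh, \tilde{B}) \geq 0$. Proposition~\ref{prop:Var1} then gives
\[
\var_{p}(\Fh, B) = \var_{q}(\pi^{*}\Fh, \tilde{B}) + \tau(\Fh)\, \nu_{p}(B) \geq 0,
\]
because $\tau(\Fh) \geq 0$ and $\nu_{p}(B) \geq 0$.

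The main point to verify carefully is that $\tilde{B}$ lies in $(\hat{F}_{q})_{0}$ rather than in $(\hat{F}_{q})_{\infty}$ after blow-up; this is precisely what makes the induction work and it is guaranteed by Lemma~\ref{lemma-balanced}, since the correction is a power of the local equation of $D$ and does not interfere with the $\tilde{B}$-component of the divisor of $\hat{F} \circ \pi$. Once this is in place, Proposition~\ref{prop:Var1} propagates non-negativity from the reduced model back to $p$ without any sign change, and the induction closes.
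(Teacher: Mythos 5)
Your proof is correct and follows exactly the paper's argument: induction on the length of the reduction process, with Example~\ref{example-reduced} handling the reduced base cases and Proposition~\ref{prop:Var1} (together with $\tau(\Fh)\geq 0$) providing the inductive step. The additional check that $\tilde{B}$ stays in the zero divisor of the blown-up balanced equation is a worthwhile detail that the paper leaves implicit.
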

\begin{proof}
The proof goes by  induction on the length of the reduction process of $\Fh$.
The inductive step is an obvious consequence of Proposition~\ref{prop:Var1}.
The initialization in its turn follows from the three cases in
Example~\ref{example-reduced}.
\end{proof}

As a consequence of the above, we obtain that the polar excess   somehow
gives a measure the existence of saddle-nodes in the desingularization of a foliation. This is the content of
\begin{maintheorem}
\label{thm-generalized-curve}
Let $\Fh$ a germ of singular foliation at $p$ in a surface $S$ and $\hat{F}$
a   balanced equation for its    separatrices. Then $\Fh$
is a generalized curve if and only if
\[
\var_{p}(\Fh,(\hat{F})_{0})=0.
\]
\end{maintheorem}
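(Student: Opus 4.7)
The plan is to prove both directions of the biconditional simultaneously by induction on the length $n$ of the minimal reduction of singularities of $\Fh$, using Propositions~\ref{prop:Var1} and~\ref{prop:VarPos} as the main tools. Since $\var_p(\Fh,B)\geq 0$ for every branch $B$ of $(\hat{F})_0$ by Proposition~\ref{prop:VarPos}, the total polar excess vanishes exactly when each of its summands does, so the argument reduces to a branch-by-branch analysis. The base case $n=0$ is handled by Example~\ref{example-reduced}: at smooth points and reduced non-degenerate singularities every separatrix contributes $\var=0$ and the foliation is trivially a generalized curve, while at a saddle-node, with balanced equation $\hat{F}=xy$, the strong separatrix contributes $0$ and the weak separatrix contributes $k>0$, so the vanishing of $\var_p(\Fh,(\hat{F})_0)$ coincides precisely with the absence of a saddle-node.

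For the inductive step, let $\pi$ be the first blow-up at $p$ with exceptional divisor $D$. Proposition~\ref{prop:Var1} gives, for each branch $B$ of $(\hat{F})_0$ whose strict transform $\tilde{B}$ meets $D$ at a point $q$,
\[
\var_p(\Fh,B) \;=\; \var_q(\pi^*\Fh,\tilde{B}) + \tau(\Fh)\,\nu_p(B).
\]
For the forward implication, if $\Fh$ is a generalized curve then $\tau(\Fh)=0$ (no saddle-nodes at all, in particular no tangent ones) and each germ $(\pi^*\Fh,q)$ is again a generalized curve; Lemma~\ref{lemma-balanced} ensures that $\tilde{B}$ is a branch of the local balanced equation $\hat{F}_q$ at $q$, so the inductive hypothesis forces $\var_q(\pi^*\Fh,\tilde{B})=0$, and hence $\var_p(\Fh,B)=0$ for every such $B$.

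For the converse, assuming $\var_p(\Fh,(\hat{F})_0)=0$, the same identity together with Lemma~\ref{lemma_pure_multiplicity} (which produces at least one branch in $(\hat{F})_0$ of positive multiplicity) forces both $\tau(\Fh)=0$ and $\var_q(\pi^*\Fh,\tilde{B})=0$ for every strict transform $\tilde{B}$ sitting over a singular point $q\in D$. The main obstacle I anticipate is that in order to conclude $\var_q(\pi^*\Fh,(\hat{F}_q)_0)=0$ and invoke the inductive hypothesis at $q$, I must also handle the extra zero component which, by Lemma~\ref{lemma-balanced}, is introduced when $D$ is non-dicritical, namely the germ $D_q$ of $D$ at $q$; this germ is not the strict transform of any branch of $(\hat{F})_0$. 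The natural way around this is a secondary iteration of Proposition~\ref{prop:Var1} applied to $D_q$ along the remaining blow-ups: the vanishing of $\tau$ kills every intermediate term, and any saddle-node that terminates the strict transform of $D_q$ would, by the base-case analysis, also propagate a positive terminal contribution onto some branch of $(\hat{F})_0$ that shares the relevant portion of the reduction tree, contradicting the original vanishing. Once $\var_q(\pi^*\Fh,D_q)=0$ is secured in this way, the inductive hypothesis yields that each $(\pi^*\Fh,q)$ is a generalized curve, and combined with $\tau(\Fh)=0$ this means that $\Fh$ has no saddle-nodes in its reduction, closing the induction.
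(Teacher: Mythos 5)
Your overall strategy---reducing to a branch-by-branch statement via Proposition~\ref{prop:VarPos}, then inducting through the blow-up with Proposition~\ref{prop:Var1} and the reduced-model computations of Example~\ref{example-reduced}---is sound, and your forward direction is correct (it is the paper's argument, phrased as an explicit induction). You also correctly isolate the delicate point in the converse: when $D$ is non-dicritical, Lemma~\ref{lemma-balanced} introduces the germ $D_q$ of the exceptional divisor as a new zero branch of the local balanced equation at $q$, and one must show $\var_{q}(\pi^{*}\Fh,D_q)=0$ before the inductive hypothesis can be invoked at $q$.

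However, the justification you give for that step would fail. You claim that if the strict transform of $D_q$ terminates as the weak separatrix of a saddle-node, then a positive contribution would ``propagate onto some branch of $(\hat{F})_{0}$ sharing the relevant portion of the reduction tree.'' It would not: the only branch of $(\hat{F})_{0}$ that could be attached to such a saddle-node is (the descent of) its strong separatrix, and by Example~\ref{example-reduced} the strong separatrix contributes $\var=0$; if the saddle-node sits at a corner, no branch of $(\hat{F})_{0}$ meets it at all. So no contradiction with the hypothesis arises along the route you describe. The correct reason is both different and immediate from what you already have: $D_q$ is a component of the total divisor $\DD$ of $\Fh$, so a saddle-node whose weak separatrix lies along its transform is by definition a \emph{tangent} saddle-node of $\Fh$, which is excluded by $\tau(\Fh)=0$. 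With that substitution your induction closes. It is worth noting that the paper's proof avoids the issue entirely by arguing once and for all in the reduced model: $\tau(\Fh)=0$ forces $\Fh$ to be of second type, so any surviving saddle-node is well-oriented, its weak separatrix is then transverse to $\DD$ and descends to an isolated separatrix of $\Fh$, hence a branch of $(\hat{F})_{0}$ with strictly positive polar excess---contradicting the hypothesis directly, with no need to control $\var$ along divisor germs.
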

\begin{proof}
Suppose first that  $\Fh$ is a generalized curve. Then $\Fh$ is in particular a foliation of second type.
For a fixed branch $B \subset (\hat{F})_{0}$,
Proposition~\ref{prop:Var1} asserts that the polar excess $\var$ is invariant under blow-ups. Thus, it suffices to follow the transforms of $B$
along the  the reduction of singularities of $\Fh$.
Now, from Example~\ref{example-reduced},
$\var_{p}(\Fh,B) > 0$  if and only if $B$ is transformed into the weak separatrix of a saddle-node.
We are however in the generalized curve case and we must have $\var_{p}(\Fh,B) = 0$.
We finally conclude that $\var_{p}(\Fh,(\hat{F})_{0})=0$ by summing up $\var_{p}(\Fh,B)$ for all branches $B \subset (\hat{F})_{0}$.

Reciprocally, note that $\var_{p}(\Fh,(\hat{F})_{0})=0$ implies that
$\var_{p}(\Fh,B)=0$ for every branch $B \subset (\hat{F})_{0}$.
Since $(\hat{F})_{0}$ is non-empty, Proposition~\ref{prop:Var1} ensures that $\mathcal{\tau(\Fh)}=0$ and
thus $\Fh$ is  a foliation of second type. This means that all possible saddle-nodes  in the reduction of $\Fh$
are well-oriented.  On the other hand, the weak separatrix of a well-oriented saddle-node
would  contribute positively to the polar  excess of $\Fh$,
  as shown in Example~\ref{example-reduced}. This leads to the conclusion that saddle-nodes do not exist at all and, by definition, $\Fh$
is a generalized curve at $p$.
\end{proof}

\section{Polar excess and the $GSV$-index}

\label{section-GSVindex}

 The  $GSV$-index was defined
by X. Gómez-Mont, J. Seade and A. Verjovsky in \cite{gomezmont1991}
for a holomorphic vector field on an analytic hypersurface $V$ at $(\mathbb{C}^{n},0)$ having isolated singularity. It is
 the  Poincaré-Hopf index of a differentiable vector field obtained by isotopically displacing the original vector field over the Milnor fiber of $V$.
The formulation below, for  foliations on $(\mathbb{C}^{2},0)$ having an invariant curve, was  introduced by
M. Brunella in \cite{brunella1997II}.
\begin{defn}
Let $\mathcal{F}$ be an analytic foliation at $\left(\mathbb{C}^{2},0\right)$
and $C$ be the union of some analytic separatrices of $\mathcal{F}$. If $\omega$
is a $1$-form that induces $\FF$ and $f=0$ is a reduced equation
for $C$, then it is possible to write a decomposition
\[
g\omega=k\d f+f\eta,
\]
where $\eta$ is a $1$-form and $g,~k\in\mathbb{C}\left\{ x,y\right\} $
with $g$ and $f$ relatively prime. The $GSV$-index of $\FF$ with respect to $C$
at $\left(\mathbb{C}^{2},0\right)$ is defined by
\[
GSV_{0}^{\textup{c}}\left(\mathcal{F},C\right)=\frac{1}{2\pi i}\int_{\partial C}\frac{g}{k}\d\left(\frac{k}{g}\right).
\]
Here $\partial C$ is the intersection $C\cap S_{\epsilon}^{3}$ , where $S_{\epsilon}^{3}$
is a small sphere centered at $0\in\mathbb{C}^{2}$, oriented as the
boundary of $C\cap B_{\epsilon}^{4}$ , for a ball $B_{\epsilon}^{4}$
such that $S_{\epsilon}^{3}=\partial B_{\epsilon}^{4}$.
\end{defn}
The superscript ``c'' in $GSV_{0}^{\textup{c}}\left(\mathcal{F},C\right)$ stresses the fact that
this definition works in the convergent category. Nevertheless,
we  can   extended the $GSV$-index to formal foliations using
the following:

\begin{defn}
Let $\Fh$ be a formal foliation at $ (\mathbb{C}^{2},0)$
and $\hat{C}$ be an irreducible separatrix of $\mathcal{F}$. If
$\hat{\omega}$ is a $1$-form   inducing $\Fh$ and $f=0$ is a
reduced equation for $\hat{C}$, then, as in the convergent case, it is possible to write a decomposition
\[
g\hat{\omega}=k\d f+f\hat{\eta}
\]
where $\hat{\eta}$ is a formal $1$-form and $g,~k\in\mathbb{C}\left[\left[x,y\right]\right]$
with $g$ and $f$ relatively prime. Now, if $\gamma$ is a Puiseux parametrization
of $\hat{C},$ we define
\[
GSV_{0}(\Fh,\hat{C})=\ord_{t=0}\frac{k}{g}\circ\gamma.
\]
If $\hat{C} = \hat{C_{0}}\cup\hat{C_{1}}$ is the union of two disjoint sets of separatrices, then we define
the $GSV$-index inductively by the formula
\begin{equation}
GSV_{0}(\Fh,\hat{C})=GSV_{0}(\Fh,\hat{C}_{0})+GSV_{0}(\Fh,\hat{C}_{1})-2(\hat{C_{0}},\hat{C_{1}})_{0} \label{eq:5},
\end{equation}
where $(\hat{C_{0}},\hat{C_{1}})_{0}$ stands for the intersection number at $0 \in \mathbb{C}^{2}$.

\end{defn}
The relation (\ref{eq:5})  simply follows
the relation satisfied in the convergent category as shown in \cite{brunella1997II}. Thus,
the following lemma is straightforward and justifies \emph{a posteriori
}the definition.
\begin{lem}
Let $\mathcal{F}$ be a germ of analytic foliation and let $C$
be a union of convergent separatrices. Then
\[
GSV_{0}\left(\mathcal{F},C\right)=GSV_{0}^{\textup{c}}\left(\mathcal{F},C\right).
\]
\end{lem}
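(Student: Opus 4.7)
The plan is to reduce to the case of an irreducible convergent branch, apply the argument principle there, and then extend by induction using the splitting formula (\ref{eq:5}).

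First, I would handle the case where $C$ is a single irreducible convergent separatrix with Puiseux parametrization $\gamma\colon(\mathbb{C},0)\to(C,0)$. The coprimality of $f$ and $g$ in $\mathbb{C}\{x,y\}$ ensures that $h(t) := (k/g)\circ\gamma(t)$ is a nonzero meromorphic germ at $t=0$, so $\ord_{t=0} h(t)$ is finite and equals $GSV_{0}(\mathcal{F},C)$ by definition. The boundary cycle $\partial C = C\cap S_\epsilon^3$ is, up to homotopy in $C\setminus\{0\}$, the image $\gamma(\{|t|=\delta\})$ of a small loop. Since $(g/k)\,\d(k/g) = \d\log(k/g)$, pulling the integrand back along $\gamma$ reduces the contour integral to $\frac{1}{2\pi i}\oint_{|t|=\delta}\d\log h(t)$, which by the argument principle equals $\ord_{t=0} h(t)$. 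This identifies the two indices in the irreducible case.

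Next, I would extend the equality to an arbitrary union $C = \bigcup_{i=1}^{m} C_i$ of irreducible convergent branches by induction on $m$. The key input here is the result of Brunella in \cite{brunella1997II}, which establishes that $GSV_{0}^{\textup{c}}$ satisfies, for disjoint unions $C = C' \cup C''$ of convergent separatrices, exactly the splitting formula
\[
GSV_{0}^{\textup{c}}(\mathcal{F},C)=GSV_{0}^{\textup{c}}(\mathcal{F},C')+GSV_{0}^{\textup{c}}(\mathcal{F},C'')-2(C',C'')_{0},
\]
that is taken as the recursive definition of $GSV_{0}$ in (\ref{eq:5}). Combining this with the irreducible case already treated, induction on the number of components immediately yields the general statement.

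The main obstacle I anticipate is the identification of $\partial C$ with $\gamma(\{|t|=\delta\})$ up to homotopy in $C\setminus\{0\}$; this step relies crucially on the convergence of $\gamma$ and is precisely where the hypothesis that the separatrix be analytic gets used — in the purely formal setting, $\partial C$ has no geometric meaning and only the algebraic definition is available. Beyond this, the proof is a direct application of the argument principle together with the quoted splitting formula of Brunella, so no serious technical difficulties should arise.
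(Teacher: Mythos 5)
Your proof is correct and is exactly the argument the paper has in mind: the paper declares the lemma ``straightforward'' precisely because, for an irreducible convergent branch, the contour integral is $\frac{1}{2\pi i}\oint \d\log(k/g)$ pulled back along the Puiseux parametrization, hence equals $\ord_{t=0}(k/g)\circ\gamma$ by the argument principle, and the recursive definition (\ref{eq:5}) is modeled on Brunella's adjunction formula for the convergent index, so induction on the number of branches finishes the job. Nothing to add.
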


Next, we establish  a link between  the $GSV$-index and the relative polar excess.
\begin{prop}
\label{prop:GSV} Let $\Fh$ be a germ of singular foliation at $p \in \mathbb{C}^{2}$
 and $\hat{F}$ be a balanced equation of separatrices. If $B \subset (\hat{F})_{0}$ is
irreducible then
\[
\var_{p}^{\textup{rel}} (\Fh,B )=GSV_{p} (\Fh,B )+ ( (\hat{F} )_{\infty},B)_{p} .
\]
\end{prop}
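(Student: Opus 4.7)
The plan is to compute both sides of the claimed identity directly from their definitions and reduce everything to orders along a Puiseux parametrization of $B$. The key observation is that, because $B$ is invariant and $f\circ\gamma\equiv 0$, several ``error terms'' drop out: the $f\,\d H$ contribution in $\d(f/H)=(H\,\d f-f\,\d H)/H^{2}$ vanishes on $\gamma$, and likewise the $f\hat{\eta}$ part of the $GSV$-decomposition $g\hat{\omega}=k\,\d f+f\hat{\eta}$.

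First I fix local coordinates $(x,y)$ at $p$, write $\hat{\omega}=P\,\d x+Q\,\d y$ for $\Fh$, $\hat{F}=G/H$ with $G,H$ coprime, $f$ the irreducible factor of $G$ defining $B$, and $\gamma(t)$ a Puiseux parametrization of $B$. Invariance of $B$ yields $(P\circ\gamma)x'+(Q\circ\gamma)y'=0$, so $P\circ\gamma$ and $Q\circ\gamma$ are proportional in $\mathbb{C}((t))$; for generic $(a:b)\in\mathbb{P}^{1}_{\mathbb{C}}$ their combination $(aP+bQ)\circ\gamma$ has order $m_{\Fh}:=\min(\ord P\circ\gamma,\ord Q\circ\gamma)$, giving $(\mathcal{P}^{\Fh},B)_{p}=m_{\Fh}$. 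The analogous reasoning applied to $f$ (differentiate $f\circ\gamma\equiv 0$) produces $m_{f}:=\min(\ord f_{x}\circ\gamma,\ord f_{y}\circ\gamma)=\ord(af_{x}+bf_{y})\circ\gamma$ for generic $(a:b)$.

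Next I compute $(\mathcal{P}^{\d(f/H)},B)_{p}$: since $f\circ\gamma\equiv 0$, the $f\,\d H$ term in $\d(f/H)$ contributes nothing along $\gamma$, so
\[
(\mathcal{P}^{\d(f/H)},B)_{p}=\ord\frac{(H\circ\gamma)(af_{x}+bf_{y})\circ\gamma}{(H\circ\gamma)^{2}}=m_{f}-\ord(H\circ\gamma)=m_{f}-((\hat{F})_{\infty},B)_{p},
\]
where the last equality uses $(H)_{0}=(\hat{F})_{\infty}$. Therefore $\var_{p}^{\textup{rel}}(\Fh,B)=m_{\Fh}-m_{f}+((\hat{F})_{\infty},B)_{p}$, and the proposition reduces to checking $m_{\Fh}-m_{f}=GSV_{p}(\Fh,B)$.

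For this last step, write $\hat{\eta}=\alpha\,\d x+\beta\,\d y$ and read $g\hat{\omega}=k\,\d f+f\hat{\eta}$ componentwise as $gP=kf_{x}+f\alpha$ and $gQ=kf_{y}+f\beta$. Restriction to $\gamma$ kills the $f\alpha$, $f\beta$ terms; since $g$ and $f$ are coprime and $f$ is irreducible, $g\circ\gamma\not\equiv 0$ has finite order. Taking orders in both equalities and then the minimum yields
\[
\ord(g\circ\gamma)+m_{\Fh}=\ord(k\circ\gamma)+m_{f},
\]
whence $m_{\Fh}-m_{f}=\ord(k/g)\circ\gamma=GSV_{p}(\Fh,B)$. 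Combining with the expression for $\var_{p}^{\textup{rel}}(\Fh,B)$ proves the identity. The only delicate points are choosing a single $(a:b)\in\mathbb{P}^{1}_{\mathbb{C}}$ generic for both polar computations — possible since the non-generic locus is finite in $\mathbb{P}^{1}_{\mathbb{C}}$ — and the finiteness of $\ord(g\circ\gamma)$, immediate from coprimeness; I do not anticipate any real obstacle.
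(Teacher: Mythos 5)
Your proof is correct and follows essentially the same route as the paper's: both rest on restricting the decomposition $g\hat{\omega}=k\,\d f+f\hat{\eta}$ to a Puiseux parametrization of $B$, using $f\circ\gamma\equiv 0$ to kill the $f\hat{\eta}$ and $f\,\d H$ terms, and comparing orders (the paper keeps the generic combination $aP+bQ$ throughout where you work componentwise with minima of orders, but this is only a notational difference).
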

\begin{proof}
Let us write $\omega=P\d x+Q\d y$, $\hat{F}=G / H$ and consider
$f$ a reduced equation of $B.$ The decomposition $g\omega = k\d f + f\eta$ gives us
\[
aP+bQ=\frac{k}{g}\frac{\left(a\partial_{x}f+b\partial_{y}f\right)}{H}H+\frac{f}{g}\left(a\eta_{x}+b\eta_{y}\right),
\]
where $\eta = \eta_{x} \d x + \eta_{y} \d y$.
Let   $\gamma$ be a Puiseux parametrization of $B$. Since
$f$ and $g$ are relatively prime, $g\circ\gamma$ is not identically zero. Moreover,
by definition, $f\circ\gamma=0$. Hence, for a fixed $(a:b) \in \mathbb{P}^{1}_{\mathbb{C}}$, we obtain
\[
\underbrace{\ord_{t=0}\left(aP+bQ\right)\circ\gamma}_{ (\mathcal{P}_{\left(a:b\right)}^{\Fh},B)}=
\underbrace{\ord_{t=0}\frac{k}{g}\circ\gamma}_{GSV_{p}(\Fh,B)}+
\underbrace{\ord_{t=0}
\frac{\left(a\partial_{x}f+b\partial_{y}f\right)}{H}}_{\left(\mathcal{P}_{\left(a:b\right)}^{\d\left(f/H\right)},B\right)_{p}}+
\underbrace{\ord_{t=0}H\circ\gamma}_{((\hat{F})_{\infty},B)_{p}}.
\]
which, after taking generic $(a:b)$, gives    the proposition. \end{proof}

\begin{lem}
\label{lem:adjunc}Let $B_{1}$ and $B_{2}$ be two  branches
of $(\hat{F})_{0}$. Then
\[
\var_{p}^{\textup{rel}}(\Fh,B_{1}\cup B_{2})=\var_{p}^{\textup{rel}}(\Fh,B_{1})+\var_{p}^{\textup{rel}}(\Fh,B_{2})-2(B_{1},B_{2})_{p}.
\]
\end{lem}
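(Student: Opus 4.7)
My plan is to reduce the identity to a short order-of-vanishing computation along Puiseux parametrizations, exploiting the fact that the only ingredient in $\var^{\textup{rel}}_p$ that is not already additive over branches is the polar intersection of $\d(f/H)$, and that the way $f$ enters is through a simple product.

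Write the fixed balanced equation as $\hat F = G/H$ and denote by $f_1, f_2$ reduced equations of $B_1$ and $B_2$, so that $f = f_1 f_2$ is a reduced equation of $B_1\cup B_2$. Since the term $(\mathcal{P}^{\Fh},\cdot)_p$ in the definition of $\var^{\textup{rel}}_p$ is already additive over the list of branches, the statement reduces to establishing
\[
(\mathcal{P}^{\d(f_1 f_2/H)},B_i)_p = (\mathcal{P}^{\d(f_i/H)},B_i)_p + (B_1,B_2)_p, \qquad i = 1,2.
\]
Summing this for $i=1,2$ and plugging into the definition of $\var^{\textup{rel}}_p$ produces the asserted correction $-2(B_1,B_2)_p$.

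To prove the displayed identity, I use the logarithmic derivative
\[
\d(f_1 f_2/H) = \frac{f_2}{H}\,\d f_1 + \frac{f_1}{H}\,\d f_2 - \frac{f_1 f_2}{H^2}\,\d H,
\]
and contract with the generic vector field $a\partial_x+b\partial_y$, where $(a:b)\in\mathbb{P}^1_{\mathbb{C}}$. Let $\gamma_1$ be a Puiseux parametrization of $B_1$. Because $f_1\circ\gamma_1\equiv 0$, the last two terms vanish identically along $\gamma_1$, leaving
\[
\bigl(\iota_{a\partial_x+b\partial_y}\,\d(f_1 f_2/H)\bigr)\circ\gamma_1 = \frac{f_2\circ\gamma_1}{H\circ\gamma_1}\cdot\bigl(a\partial_xf_1+b\partial_yf_1\bigr)\circ\gamma_1.
\]
An analogous computation for $\d(f_1/H)$ gives
\[
\bigl(\iota_{a\partial_x+b\partial_y}\,\d(f_1/H)\bigr)\circ\gamma_1 = \frac{(a\partial_xf_1+b\partial_yf_1)\circ\gamma_1}{H\circ\gamma_1}.
\]
Taking $\ord_{t=0}$ of the two expressions and subtracting, the common terms $\ord_{t=0}(a\partial_xf_1+b\partial_yf_1)\circ\gamma_1$ and $\ord_{t=0}H\circ\gamma_1$ cancel, and we are left with $\ord_{t=0}(f_2\circ\gamma_1)=(B_1,B_2)_p$. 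Taking $(a:b)$ generic gives the polar-intersection identity; the same argument applied to a parametrization $\gamma_2$ of $B_2$ yields the case $i=2$.

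The only minor point that needs checking is that $H\circ\gamma_i$ has finite order, so that the division by $H$ makes sense; this holds because the zero divisor and the pole divisor of the balanced equation $\hat F$ are disjoint, hence $B_i\subset(\hat F)_0$ is not contained in $(\hat F)_\infty$. No genuine obstacle is expected: the computation is a direct unpacking of the definitions combined with the product rule.
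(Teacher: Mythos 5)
Your proof is correct and follows essentially the same route as the paper's: both reduce the lemma to the identity $(\mathcal{P}^{\d(f_1f_2/H)},B_i)_p=(\mathcal{P}^{\d(f_i/H)},B_i)_p+(B_1,B_2)_p$ and establish it by the product rule evaluated along a Puiseux parametrization of $B_i$, using $f_i\circ\gamma_i=0$ to discard the extra terms. Your explicit treatment of the $\d H$ term and of the finiteness of $\ord_{t=0}H\circ\gamma_i$ merely spells out what the paper leaves implicit.
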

\begin{proof}
Let $f$ and $g$ be respectively  the equations of $B_{1}$ and $B_{2}.$
Let $\gamma$ be a Puiseux parametrization of $B_{1}.$ For $(a:b) \in \mathbb{P}^{1}_{\mathbb{C}}$, we have
\begin{eqnarray*}
 (\mathcal{P}_{\left(a:b\right)}^{d ( fg/H)},B_{1} )_{p} & = & \ord_{t=0}\left(\frac{a\partial_{x}(fg)+b\partial_{y}(fg)}{H}\right)\circ\gamma\\
 & = & \ord_{t=0}\left(\frac{g\left(a\partial_{x}f+b\partial_{y}f\right)+f\left(a\partial_{x}g+b\partial_{y}g\right)}{H}\right)\circ\gamma\\
 & = & \ord_{t=0}\left(g\circ\gamma\right)+\ord_{t=0}\left(\frac{a\partial_{x}f+b\partial_{y}f}{H}\right)\circ\gamma\\
 & = &  (B_{1},B_{2})_{p}+ (\mathcal{P}_{ (a:b)}^{d(f/H)},B_{1})_{p}.
\end{eqnarray*}
Since, by symmetry,  the same holds for $B_{2}$, we get
\begin{eqnarray*}
\var_{p}^{\textup{rel}}(\Fh,B_{1}\cup B_{2}) & = & (\mathcal{P}_{\left(a:b\right)}^{\Fh},B_{1}\cup B_{2})_{p}-(\mathcal{P}_{(a:b)}^{\d(fg/H)},B_{1}\cup B_{2})_{p}\\
 & = & (\mathcal{P}_{(a:b)}^{\Fh},B_{1})_{p}+(\mathcal{P}_{(a:b)}^{\Fh},B_{2})_{p}\\
 &  & -(\mathcal{P}_{(a:b)}^{\d(fg/H)},B_{1})_{p}-(\mathcal{P}_{(a:b)}^{\d(fg/H)},B_{2})_{p}\\
 & = & (\mathcal{P}_{(a:b)}^{\Fh},B_{1})_{p}-(\mathcal{P}_{(a:b)}^{d(\frac{f}{H})},B_{1})_{p}\\
 &  & +(\mathcal{P}_{(a:b)}^{\Fh},B_{2})_{p}-(\mathcal{P}_{(a:b)}^{d(g/H)},B_{2})_{p}  \\
  &   &  -2(B_{1},B_{1})_{p} \\
 &  =  &  \var_{p}^{\textup{rel}}(\Fh,B_{1})+\var_{p}^{\textup{rel}}(\Fh,B_{2})-2(B_{1},B_{2})_{p}
 \end{eqnarray*}

\end{proof}
By   simple induction on the number of irreducible components, we can
extend   Proposition~\ref{prop:GSV},  replacing $B_{1}$ and $B_{2}$ by
 two disjoint sets of separatrices. Moreover, since the $GSV$-index
satisfies the very same adjunction formula,
this
result   implies that
\begin{cor}
\label{cor:rel-gsv}
For any set of separatrices $C\subset (\hat{F} )_{0}$,
one has
\[
\var_{p}^{\textup{rel}}(\Fh,C)=GSV_{p}(\Fh,C)+((F)_{\infty},C)_{p}.
\]
\end{cor}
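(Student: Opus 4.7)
My plan is to prove the corollary by induction on the number of irreducible components of $C$, using Proposition~\ref{prop:GSV} as the base case and the three compatible adjunction/additivity formulas as the inductive engine.

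First I would write $C = C_{1} \cup \cdots \cup C_{n}$ as the irreducible decomposition. The base case $n=1$ is exactly the content of Proposition~\ref{prop:GSV}. For the inductive step I would split $C = C' \cup C_{n}$, where $C' = C_{1} \cup \cdots \cup C_{n-1}$ and $C_{n}$ is a single branch disjoint from $C'$, and combine the following three ingredients:
\begin{enumerate}[(a)]
\item the adjunction formula of Lemma~\ref{lem:adjunc}, extended to disjoint unions of branches by a straightforward sub-induction (as the text remarks just after the lemma), giving
\[\var_{p}^{\textup{rel}}(\Fh,C' \cup C_{n}) = \var_{p}^{\textup{rel}}(\Fh,C') + \var_{p}^{\textup{rel}}(\Fh,C_{n}) - 2(C',C_{n})_{p};\]
\item the adjunction formula \eqref{eq:5} for the $GSV$-index, which has the identical shape
\[GSV_{p}(\Fh,C' \cup C_{n}) = GSV_{p}(\Fh,C') + GSV_{p}(\Fh,C_{n}) - 2(C',C_{n})_{p};\]
\item the straightforward additivity of intersection numbers, namely
\[((\hat{F})_{\infty}, C' \cup C_{n})_{p} = ((\hat{F})_{\infty}, C')_{p} + ((\hat{F})_{\infty}, C_{n})_{p}.\]
\end{enumerate}

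Applying the inductive hypothesis to $C'$ and Proposition~\ref{prop:GSV} to $C_{n}$, the right-hand side of (a) becomes
\[GSV_{p}(\Fh,C') + GSV_{p}(\Fh,C_{n}) + ((\hat{F})_{\infty}, C')_{p} + ((\hat{F})_{\infty}, C_{n})_{p} - 2(C',C_{n})_{p},\]
and regrouping via (b) and (c) yields exactly $GSV_{p}(\Fh,C) + ((\hat{F})_{\infty}, C)_{p}$, closing the induction. Since all three identities have the same $-2(C',C_{n})_{p}$ correction and the intersection term is purely additive, the cross-terms cancel perfectly, so no subtle obstacle arises; the only thing worth double-checking is that Lemma~\ref{lem:adjunc}, although stated for two branches, does extend to any two disjoint unions of branches, which is immediate from iterating the two-branch case and using that $(A \cup B, E)_{p} = (A,E)_{p} + (B,E)_{p}$ for intersection numbers.
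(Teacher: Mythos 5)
Your proposal is correct and follows exactly the route the paper takes: the paper's own justification is precisely "by simple induction on the number of irreducible components," using Proposition~\ref{prop:GSV} as the base case and the matching adjunction formulas of Lemma~\ref{lem:adjunc} and equation~\eqref{eq:5} so that the $-2(C',C_{n})_{p}$ cross-terms cancel. Nothing further is needed.
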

In order to make a link between the total polar excess of $\Fh$ and its
$GSV$-index, we will start by providing a connection between the total and the relative polar
excess.
\begin{lem}
\label{lem:rel-norel} Let $C\subset(\hat{F})_{0}$ be a set of separatrices. Then
\[
\var_{p}(\Fh,C)=\var_{p}^{\textup{rel}}(\Fh,C)-(C,(\hat{F})_{0}\backslash C)_{p}.
\]
\end{lem}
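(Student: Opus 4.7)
The plan is to compare, for each irreducible component $C_i$ of $C$, the two polar intersection numbers $(\mathcal{P}^{\d\hat F},C_i)_p$ and $(\mathcal{P}^{\d(f/H)},C_i)_p$, where $f$ is a reduced equation of $C$ and $\hat F = G/H$. Since the lemma asserts
\[
\var_p^{\textup{rel}}(\Fh,C)-\var_p(\Fh,C)=(C,(\hat F)_0\setminus C)_p,
\]
summing the definitions term by term reduces the statement to proving, for each branch $C_i$,
\[
(\mathcal{P}^{\d\hat F},C_i)_p - (\mathcal{P}^{\d(f/H)},C_i)_p = (C_i,(\hat F)_0\setminus C)_p.
\]

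First I would factor $G=fg$, where $g$ is a formal equation for the complementary zero divisor $(\hat F)_0\setminus C$. Since $C$ and $(\hat F)_\infty$ lie in disjoint components of the divisor of $\hat F$, the function $H$ does not vanish on $C_i$. A direct computation gives
\[
a\partial_x(fg/H)+b\partial_y(fg/H) \;=\; \frac{g(a\partial_x f+b\partial_y f)+f(a\partial_x g+b\partial_y g)}{H}\;-\;\frac{fg(a\partial_x H+b\partial_y H)}{H^2},
\]
which is the analogue of the computation already carried out in Lemma~\ref{lem:adjunc}. Pulling this expression back by a Puiseux parametrization $\gamma$ of $C_i$ kills every term containing the factor $f$, because $f\circ\gamma\equiv 0$, leaving
\[
\bigl[a\partial_x(fg/H)+b\partial_y(fg/H)\bigr]\circ\gamma \;=\; (g\circ\gamma)\,\frac{(a\partial_x f+b\partial_y f)\circ\gamma}{H\circ\gamma}.
\]

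Taking orders in $t$ and then picking $(a:b)$ generic, the right-hand side identifies as
\[
\ord_{t=0}(g\circ\gamma)\;+\;(\mathcal{P}^{\d(f/H)},C_i)_p \;=\; (C_i,(\hat F)_0\setminus C)_p + (\mathcal{P}^{\d(f/H)},C_i)_p,
\]
while the left-hand side is exactly $(\mathcal{P}^{\d\hat F},C_i)_p$. This yields the branch-wise identity highlighted above. Summing over the branches $C_i$ of $C$ and subtracting $(\mathcal{P}^{\Fh},C_i)_p$ from both polar intersection terms produces the desired equality.

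I do not foresee a genuine obstacle: the argument is essentially bookkeeping on top of the same Puiseux-order calculation already used in Lemma~\ref{lem:adjunc} and Proposition~\ref{prop:GSV}. The only points requiring mild care are the verification that $H$ is invertible on $\gamma$ (so that the denominator $H\circ\gamma$ contributes $0$ and does not interfere with the order computation on each $C_i$) and the correct matching of the definition of $\var_p^{\textup{rel}}(\Fh,C)$, which uses $f/H$ with $f$ the equation of the full curve $C$ (not of a single branch), so that the dropped terms really reconstruct $g$ as the equation of $(\hat F)_0\setminus C$.
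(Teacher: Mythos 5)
Your proof is correct and rests on the same Puiseux-order computation as the paper's: factor the numerator of the balanced equation along the branch and read off $\ord_{t=0}(g\circ\gamma)$ as the intersection number with the complementary divisor. The only organizational difference is that you establish the branch-wise identity directly with $f$ the reduced equation of all of $C$, settling the general case in one stroke, whereas the paper treats a single branch and then gets the general case by induction through the adjunction formula of Lemma~\ref{lem:adjunc}; both routes work and cost the same. One small slip in your closing remarks: $H\circ\gamma$ is not invertible in general --- its order is $((\hat{F})_{\infty},C_i)_p$, which may well be positive since the pole components pass through $p$ --- but this is harmless, because your displayed computation keeps $H\circ\gamma$ bundled inside the term $(\mathcal{P}^{\d(f/H)},C_i)_p$, where it is accounted for on both sides of the difference.
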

\begin{proof}
We will present the proof for a  branch of separatrix. The general case
is a simple induction based upon Lemma~\ref{lem:adjunc}. As usual,
denote  $\hat{F}= G/H =  g_{1}\cdots g_{n}/H$ and let $\gamma$ be a
Puiseux parametrization of the irreducible component $B=\{ g_{1}=0\} $
of $(\hat{F})_{0}$ . Let us denote by $g$ the product
$g_{2}\cdots g_{n}$. Then, for $(a:b) \in \mathbb{P}^{1}_{\mathbb{C}}$, we have
\begin{eqnarray*}
(\mathcal{P}_{(a:b)}^{d(\frac{G}{H})},B)_{p} & = & \ord_{t=0}\left(\frac{a\partial_{x}G+b\partial_{y}G}{H}\right)\circ\gamma\\
 & = & \ord_{t=0}\left(g\left(\frac{a\partial_{x}g_{1}+b\partial_{y}g_{1}}{H}\right)\right)\circ\gamma\\
 & = & \underbrace{\ord_{t=0}g\circ\gamma}_{(B,(F)_{0}\backslash B)}+\underbrace{\ord_{t=0}\left(\frac{a\partial_{x}g_{1}+b\partial_{y}g_{1}}{H}\right)
 \circ\gamma}_{\left(\mathcal{P}_{(a:b)}^{d(g_{1}/H)},B\right)_{p}} \  ,
\end{eqnarray*}
which ensures the lemma.
\end{proof}
Finally, the combination of Corollary~\ref{cor:rel-gsv} and Lemma~\ref{lem:rel-norel}
yields the next result, which expresses the $GSV$-index in terms of the polar excess index
and the balanced equation of separatrices.

\begin{maintheorem}
\label{thm-gsv-polar}
Let $\Fh$ be a germ of singular foliation at $(\mathbb{C}^{2},p)$.
Let $\hat{F}$ be a balanced equation of separatrices and $C$ be a
subset of $(\hat{F})_{0}$. Then
\[
GSV_{p}(\Fh,C)=\var_{p}(\Fh,C)+(C,(\hat{F})_{0}\backslash C)_{p}-(C,(\hat{F})_{\infty})_{p}.
\]
In particular, if $\Fh$ is a generalized curve then
\[
GSV_{p}(\Fh,(\hat{F})_{0})=-((\hat{F})_{0},(\hat{F})_{\infty})_{p}.
\]
\end{maintheorem}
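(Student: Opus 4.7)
The proof will be essentially a mechanical synthesis: the real content has already been extracted in Corollary~\ref{cor:rel-gsv} and Lemma~\ref{lem:rel-norel}, so the plan is simply to combine those two identities and then specialize using Theorem~\ref{thm-generalized-curve}. No new computation with Puiseux parametrizations or blow-ups is needed.

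More concretely, I would first rewrite Corollary~\ref{cor:rel-gsv} by isolating the $GSV$-index on the left-hand side, giving
\[
GSV_{p}(\Fh,C) \ = \ \var_{p}^{\textup{rel}}(\Fh,C) - (C,(\hat{F})_{\infty})_{p},
\]
where I have used the symmetry of the intersection number at $p$. Then I would substitute the relation supplied by Lemma~\ref{lem:rel-norel},
\[
\var_{p}^{\textup{rel}}(\Fh,C) \ = \ \var_{p}(\Fh,C) + (C,(\hat{F})_{0}\backslash C)_{p},
\]
into the right-hand side of the previous display. This yields exactly
\[
GSV_{p}(\Fh,C) \ = \ \var_{p}(\Fh,C) + (C,(\hat{F})_{0}\backslash C)_{p} - (C,(\hat{F})_{\infty})_{p},
\]
which is the first formula of the theorem.

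For the generalized curve case, I would take $C=(\hat{F})_{0}$ in the formula just obtained. The intersection term $(C,(\hat{F})_{0}\backslash C)_{p}$ then vanishes tautologically since $(\hat{F})_{0}\backslash C = \emptyset$. Moreover, Theorem~\ref{thm-generalized-curve} guarantees that $\var_{p}(\Fh,(\hat{F})_{0})=0$ precisely when $\Fh$ is a generalized curve, so the only surviving term on the right is $-((\hat{F})_{0},(\hat{F})_{\infty})_{p}$, which establishes the stated particular case.

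The only aspect that requires some care, rather than being a genuine obstacle, is the book-keeping of intersection indices when $C$ is reducible: since both Corollary~\ref{cor:rel-gsv} and Lemma~\ref{lem:rel-norel} are stated for arbitrary unions of branches of $(\hat{F})_{0}$, the combination is valid for the general $C$ of the theorem, and one does not need to redo the induction on the number of irreducible components carried out in Lemma~\ref{lem:adjunc}.
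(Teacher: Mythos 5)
Your proposal is correct and is essentially identical to the paper's own argument: the authors likewise obtain the theorem by directly combining Corollary~\ref{cor:rel-gsv} with Lemma~\ref{lem:rel-norel}, and the particular case follows from Theorem~\ref{thm-generalized-curve} exactly as you describe.
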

In the non-dicritical case, the $GSV$-index and the polar excess index coincide,
as shown in \cite{cano2015}:
\begin{cor} If $\Fh$ is non-dicritical   and $C$ is its complete set of separatrices, then
\[ GSV_{p}(\Fh,C)=\var_{p}(\Fh,C). \]
 \end{cor}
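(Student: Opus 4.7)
The plan is to apply Theorem~\ref{thm-gsv-polar} and observe that both correction terms vanish under the non-dicritical hypothesis with $C$ being the full set of separatrices.

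First, I would recall that Theorem~\ref{thm-gsv-polar} gives, for any subset $C \subset (\hat{F})_{0}$,
\[
GSV_{p}(\Fh,C)=\var_{p}(\Fh,C)+(C,(\hat{F})_{0}\backslash C)_{p}-(C,(\hat{F})_{\infty})_{p},
\]
so the claim reduces to showing that the last two intersection numbers are zero in our setting.

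Next I would examine the structure of a balanced equation when $\Fh$ is non-dicritical. By Definition~\ref{def-balanced-set}, the divisor of $\hat{F}$ is a sum over $\mathcal{I}(\Fh)$ together with contributions from $\textup{Dic}(\Fh)$; since $\textup{Dic}(\Fh) = \emptyset$, only the first sum survives and all coefficients equal $1$. Thus $(\hat{F})_{\infty} = \emptyset$, which immediately yields $(C,(\hat{F})_{\infty})_{p}=0$. Moreover, in the non-dicritical case every separatrix is isolated, so the complete set of separatrices $C$ coincides as a divisor with $(\hat{F})_{0}=\sum_{C'\in \mathcal{I}(\Fh)}(C')$. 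Consequently $(\hat{F})_{0}\setminus C$ is empty, giving $(C,(\hat{F})_{0}\setminus C)_{p}=0$.

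Substituting these two vanishings into the formula of Theorem~\ref{thm-gsv-polar} yields the desired equality. I do not anticipate a substantive obstacle here: the proof is essentially a bookkeeping verification that the balanced equation for a non-dicritical foliation has no poles and that its zero divisor is nothing but the complete separatrix set, so the corollary is an immediate specialization of the main theorem of the section.
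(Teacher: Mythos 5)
Your proof is correct and follows exactly the route the paper intends: the corollary is the immediate specialization of Theorem~\ref{thm-gsv-polar}, using the observation (already made after Definition~\ref{def-balanced-set}) that for a non-dicritical foliation a balanced equation is simply a reduced equation of the finite set of separatrices, so $(\hat{F})_{\infty}=\emptyset$ and $(\hat{F})_{0}=C$, making both correction terms vanish. Nothing to add.
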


\section{The Poincaré problem for dicritical singularities}

\label{section-poincaré-problem}

We now work with global foliations on  $\mathbb{P}^{2}_{\mathbb{C}}$. Let $\mathcal{F}$ be an analytic foliation   of degree
$d$ having an  invariant algebraic curve $S$ of degree $d_{0}$.
In \cite{brunella1997I} and \cite{brunella1997II}, the sum of the $GSV$-indices over $S$ is expressed as
\[c_1(N_\mathcal{F}) \cdot S - S \cdot S = \sum_{p \in \sing(\mathcal{F}) \cap S} GSV_p(\mathcal{F},S), \]where
$N_\mathcal{F} = \mathcal{O}(d + 2)$ is the normal bundle of $\mathcal{F}$.
This translates into the following numerical identity
\[
\left(d+2-d_{0}\right)d_{0}=\sum_{p \in \sing(\mathcal{F})\cap S}GSV_{p}\left(\mathcal{F},S\right).
\]

Using the expression of the $GSV$-index in terms of polar excess given
in Theorem~\ref{thm-gsv-polar}, we get a control of the degree of the
invariant curve  $d_{0}$ in terms of local   invariants of the foliation. We have:

\begin{eqnarray}
d_{0} & = & d+2-\frac{1}{d_{0}}\sum_{p\in \sing(\mathcal{F})\cap S}
GSV_{p}(\mathcal{F},S)\nonumber \\
 & = & d+2-\frac{1}{d_{0}}\sum_{p\in \sing(\mathcal{F})\cap S} \left(\var_{p}(\mathcal{F},S)+(S,(\hat{F}_{p})_{0}\backslash S)_{p}-(S,(\hat{F}_{p})_{\infty})_{p} \right) , \nonumber
\end{eqnarray}
where, at each point $p \in \sing(\mathcal{F})\cap S$, we chose a balanced equation $\hat{F}_{p}$ adapted to
to the local branches of $S$ at $p$.
Using Proposition~\ref{prop:VarPos}, we find
\begin{maintheorem}
\label{thm-poincare-dic}
 Let $\mathcal{F}$ be an analytic foliation on  $\mathbb{P}^{2}_{\mathbb{C}}$   of degree
$d$ having an  invariant algebraic curve $S$ of degree $d_{0}$. Then
\begin{equation} d_{0} \leq
d+2+\frac{1}{d_{0}}\sum_{p\in \sing(\mathcal{F})\cap S} \left[
(S, (\hat{F}_{p})_{\infty})_{p}-(S,(\hat{F}_{p})_{0}\backslash S)_{p} \right], \label{eq:3-1} \end{equation}
where $\hat{F}_{p}$ is a balanced equation of separatrices adapted to
to the local branches of $S$ at $p$.
Besides,   if all singularities
of $\mathcal{F}$ over $S$  are generalized curves, then the equality holds.
\end{maintheorem}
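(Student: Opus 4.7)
The plan is to assemble three results that the paper has already prepared: Brunella's identity expressing $c_1(N_\mathcal{F})\cdot S - S\cdot S$ as a sum of $GSV$-indices, the formula from Theorem~\ref{thm-gsv-polar} rewriting each local $GSV_p$ in terms of the polar excess and intersection numbers with a balanced equation, and the non-negativity of the polar excess given by Proposition~\ref{prop:VarPos}. The proof is thus mostly an algebraic rearrangement; the real content is checking that the local bookkeeping is legitimate.

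First I would recall the global numerical identity
\[ (d+2-d_0)\,d_0 \;=\; \sum_{p \in \sing(\mathcal{F})\cap S} GSV_p(\mathcal{F},S), \]
coming from the computation $c_1(N_\mathcal{F})\cdot S - S\cdot S$ with $N_\mathcal{F} = \mathcal{O}(d+2)$ recalled at the beginning of the section. At each $p \in \sing(\mathcal{F})\cap S$, I would then fix a balanced equation $\hat{F}_p$ of separatrices of $\mathcal{F}$ at $p$ adapted to the local branches of $S$ at $p$, in the sense of Definition~\ref{def-balanced-set}, so that these branches are components of $(\hat{F}_p)_0$. Applying Theorem~\ref{thm-gsv-polar} with $C$ equal to the union of the local branches of $S$ at $p$ yields
\[ GSV_p(\mathcal{F},S) \;=\; \var_p(\mathcal{F},S) + (S,(\hat{F}_p)_0\backslash S)_p - (S,(\hat{F}_p)_\infty)_p. \]
Substituting into the global identity, dividing by $d_0$ and rearranging produces the exact formula
\[ d_0 \;=\; d+2 - \frac{1}{d_0}\sum_p \var_p(\mathcal{F},S) + \frac{1}{d_0}\sum_p \left[(S,(\hat{F}_p)_\infty)_p - (S,(\hat{F}_p)_0\backslash S)_p\right]. \]
The stated inequality is then immediate from $\var_p(\mathcal{F},S) \geq 0$, which is Proposition~\ref{prop:VarPos} applied branch by branch.

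For the equality statement I would invoke Theorem~\ref{thm-generalized-curve}: if every $p \in \sing(\mathcal{F})\cap S$ is a generalized curve singularity, then $\var_p(\mathcal{F},(\hat{F}_p)_0)=0$ at each such $p$. Because the total polar excess decomposes as a sum of the non-negative contributions of each irreducible component of $(\hat{F}_p)_0$, this forces in particular $\var_p(\mathcal{F},S)=0$, and the inequality collapses to an equality.

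The only delicate step is the one that is easiest to overlook, namely ensuring that the local branches of $S$ at $p$ genuinely sit inside $(\hat{F}_p)_0$, so that both the decomposition $(\hat{F}_p)_0\backslash S$ and the direct application of Theorem~\ref{thm-gsv-polar} are justified. This is precisely what the word \emph{adapted} in Definition~\ref{def-balanced-set} provides: one rebalances the coefficients $a_{D,C}$ over the dicritical components so that the prescribed convergent separatrices of $\mathcal{F}$ (here, the branches of $S$) are zeros rather than poles of $\hat{F}_p$. Once this point is in place, the proof reduces to the algebraic manipulation above.
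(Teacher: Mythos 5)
Your proposal is correct and follows essentially the same route as the paper: Brunella's identity $(d+2-d_0)d_0=\sum_p GSV_p(\mathcal{F},S)$, substitution via Theorem~\ref{thm-gsv-polar} with an adapted balanced equation at each singular point, and the inequality from Proposition~\ref{prop:VarPos}, with Theorem~\ref{thm-generalized-curve} handling the equality case. The remarks on adaptedness and on the branch-by-branch decomposition of the polar excess are exactly the points the paper relies on implicitly.
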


This theorem allows us to recover the main result in \cite{carnicer1994}:

\begin{cor} If $(\hat{F}_{p})_{\infty} = \emptyset$ for all $p \in \sing(\mathcal{F})\cap S$ then
 $d_{0} \leq d+2$. This happens in particular when all such points are non-dicritical.
\end{cor}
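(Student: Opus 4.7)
The plan is to insert the hypothesis directly into the inequality of Theorem~\ref{thm-poincare-dic} and observe that the right hand side correction becomes non-positive.

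First, the hypothesis $(\hat{F}_{p})_{\infty}=\emptyset$ means that the balanced equation $\hat{F}_{p}$ is a formal holomorphic function at $p$, so $(S,(\hat{F}_{p})_{\infty})_{p}=0$ for each $p\in\sing(\mathcal{F})\cap S$. On the other hand, since $\hat{F}_{p}$ is adapted to the local branches of $S$, the curve $(\hat{F}_{p})_{0}\setminus S$ shares no component with $S$, so its intersection number with $S$ at $p$ is a non-negative integer, that is $(S,(\hat{F}_{p})_{0}\setminus S)_{p}\geq 0$. Summing over $p\in\sing(\mathcal{F})\cap S$, the bracketed expression in \eqref{eq:3-1} is non-positive, and Theorem~\ref{thm-poincare-dic} immediately yields $d_{0}\leq d+2$.

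It remains to check that the assumption $(\hat{F}_{p})_{\infty}=\emptyset$ holds whenever $p$ is a non-dicritical singularity of $\mathcal{F}$. Recall from Definition~\ref{def-balanced-set} that poles in a balanced equation can only arise from coefficients $a_{D,C}<0$ attached to curvets of dicritical components $D$, which occurs precisely when $\val(D)\geq 3$. If the germ $(\mathcal{F},p)$ is non-dicritical, then there are no dicritical components at all in its reduction of singularities, so every balanced equation adapted to the local branches of $S$ at $p$ reduces to a product of equations of the finitely many separatrices, with no poles. Hence $(\hat{F}_{p})_{\infty}=\emptyset$ and the previous paragraph applies.

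There is no serious obstacle here; the only point that deserves care is the bookkeeping that $S\subset(\hat{F}_{p})_{0}$, so that $(\hat{F}_{p})_{0}\setminus S$ really is a genuinely different curve whose intersection number with $S$ is well defined and non-negative.
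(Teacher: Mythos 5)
Your proof is correct and is exactly the intended derivation from Theorem~\ref{thm-poincare-dic} (the paper states this corollary without a separate proof): the hypothesis kills the pole terms, and $(S,(\hat{F}_{p})_{0}\setminus S)_{p}\geq 0$ because the adapted balanced equation contains the branches of $S$ among its zeros so the residual divisor shares no component with $S$, whence the bracketed correction is non-positive. One minor inaccuracy, harmless to your argument: negative coefficients $a_{D,C}$ are \emph{forced} when $\val(D)\geq 3$ but may also be chosen for smaller valences (cf.\ the paper's example $xy(x-y)/(x+y)$ for the radial foliation, where $\val(D)=0$); what you actually use --- that a non-dicritical germ has no dicritical components at all, hence a pole-free balanced equation --- is correct.
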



\subsection{A dicritical foliation of degree $1$ in $\mathbb{P}^{2}_{\mathbb{C}}$}
\label{subsection-node}

As an example, let us consider the foliation of $\mathbb{P}^{2}_{\mathbb{C}}$
given by the meromorphic homogeneous first integral
\[
\omega=\d\left(\frac{x^{p}z^{q-p}}{y^{q}}\right),
\]
where $p<q$. This is a foliation of degree $1$ on $\mathbb{P}^{2}_{\mathbb{C}}$ whose generic
separatrix has degree $q$ and is given by $x^{p}y^{q-p}-cz^{q}=0$. Let
us consider $S=\left\{ x^{p}y^{q-p}-z^{q}=0\right\} $. The foliation
has three singularities: two of them --- denoted by $a$ and $b$ on
the picture below --- are dicritical, the other one is a saddle.

\begin{figure}[H]
\noindent \centering{}\includegraphics[scale=0.5]{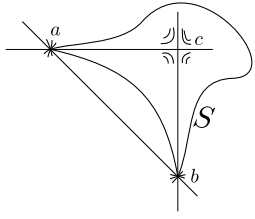}\protect\caption{The foliation of $\mathbb{P}^{2}_{\mathbb{C}}$ given by $\omega= \d ( x^{p}z^{q-p}/y^{q})$}
\end{figure}

In this situation, the sum in inequality (\ref{eq:3-1}) reduces to
\[
(S,(F_{a})_{\infty})-(S,(F_{a})_{0}\backslash S)+(S,(F_{b})_{\infty})-(S,(F_{b})_{0}\backslash S).
\]

In a neighborhood of $a$, in the affine chart $z=1$, we can
choose
\[ \hat{F}_{a}= \frac{xy (x^{p}-y^{q} )}{x^{p}-2y^{q}}\]
 as a
 balanced equation. Thus
\begin{eqnarray*}
(S,(F_{a})_{\infty})-(S,(F_{a})_{0}\backslash S) & = & (x^{p}-y^{q},x^{p}-2y^{q})-(x^{p}-y^{q},xy)\\
 & = & pq-p-q.
\end{eqnarray*}

On the other hand,
in a neighborhood of $b$ and in the affine chart $x=1$,   $\hat{F}_{b}= zy (z^{q-p}-y^{q})/(z^{q-p}-2y^{q})$
is a balanced equation of separatrices. Thus
\begin{eqnarray*}
(S,(F_{b})_{\infty})-(S,(F_{b})_{0}\backslash S) & = & (z^{q-p}-y^{q},z^{q-p}-2y^{q})-
(z^{q-p}-y^{q},zy)\\
 & = & q^{2}-qp-2q+p.
\end{eqnarray*}
Finally, we get   checked the  equality
\[
\underbrace{d_{0}}_{=q}=\underbrace{d}_{=1}+2-\frac{1}{q}((pq-p-q)+(q^{2}-qp-2q+p)).
\]

\subsection{The pencil of Lins-Neto.}
\label{subsection-linsneto}

One of the most remarkable counterexamples for the Poincaré's problem
is the pencil of A. Lins-Neto (see \cite{linsneto2002}) defined
by
\begin{eqnarray*}
\mathcal{P}_{\alpha} & = & \omega+\alpha\eta\\
\omega & = & \left(x^{3}-1\right)x\d y+\left(y^{3}-1\right)y\d x\\
\eta & = & \left(x^{3}-1\right)y^{2}\d y+\left(y^{3}-1\right)x^{2}\d x.
\end{eqnarray*}
When $\alpha\in\mathbb{Q}\left(j\right)$ where $j=e^{\frac{2i\pi}{3}}$,
the foliation $\mathcal{P}_{\alpha}$ of $\mathbb{P}^{2}_{\mathbb{C}}$ of degree
$4$ admits a meromorphic first integral whose degree cannot be bounded,
although the singular locus and the local analytical type of the singularities
do not depend on $\alpha$. Actually, when $\alpha\in\mathbb{Q}\left(j\right)\backslash\left\{ 1,j,j^{2},\infty\right\} $,
the singular locus consists of 21 points: 9 of them are non-degenerate
singularities and the 12 remaining are radial singularities, given in local coordinates $(u,v)$
  by $ \d(u/v)=0$. These singularities are the locus of
intersection of the 9 lines defined in homogeneous coordinates by
\[
\left(x^{3}-y^{3}\right)\left(x^{3}-z^{3}\right)\left(y^{3}-z^{3}\right)=0.
\]
The group of automorphisms of $\mathcal{P}_{\alpha}$ is generated
by the following five transformations:
\[
\begin{array}{c}
\left[x:y:z\right]\mapsto\left[y:x:z\right],\ \left[x:y:z\right]\mapsto\left[z:y:x\right],\ \left[x:y:z\right]\mapsto\left[x:z:y\right], \smallskip \\
\left[x:y:z\right]\mapsto\left[jx:j^{2}y:z\right],\ \left[x:y:z\right] \mapsto\left[j^{2}x:jy:x\right] .
\end{array}
\]
The action of this group on the radial singular points divides them
 in $4$ classes of $3$ points:
 \[
\begin{array}{l}
\left\{ s_{1}=\left[0:0:1\right],\left[0:1:0\right],\left[1:0:0\right]\right\}, \smallskip  \\
\left\{ s_{2}=\left[1:1:1\right],\left[j:j^{2}:1\right],\left[j^{2}:j:1\right]\right\}, \smallskip \\
\left\{ s_{3}=\left[1:j:1\right],\left[j:1:1\right],\left[j^{2}:j^{2}:1\right]\right\}, \smallskip  \\
\left\{ s_{4}=\left[1:j^{2}:1\right],\left[j:j:1\right],\left[j^{2}:1:1\right]\right\}.
\end{array}
\]
Let us now consider $\alpha\in\mathbb{Q}\left(j\right)\backslash\left\{ 1,j,j^{2},\infty\right\} $
and $S$ a generic invariant algebraic curve of $\mathcal{P}_{\alpha}$.
The intersection of $S$ with the singular locus contains only radial
singularities, because the set of algebraic invariant curves passing
through the non-degenerated singularities is finite. Let us denote
$N_{i}$ the number of times that $S$ goes through the singular point
$s_{i}$. This number does not depend on the choice of $S$. In view
of the description of the group of automorphisms of $\mathcal{P}_{\alpha}$,
for any point $\tilde{s}_{i}$ in the same orbit of $s_{i}$, $S$
goes through $\tilde{s}_{i}$ also $N_{i}$ times. The following lemma
is easy:
\begin{lem}
Let $\mathcal{R}$ be the germ of radial foliation $x\d y-y\d x$
and $S$ be a union of $N$  invariant curves. Let $F$ be an adapted
balanced equation. Then
\[
\left(S,\left(F\right)_{\infty}\right)-\left(S,\left(F\right)_{0}\backslash S\right)=N^{2}-2N.
\]

\end{lem}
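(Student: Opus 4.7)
The plan is to exploit the very simple reduction of the radial foliation $\mathcal{R}$. A single blow-up $\pi\colon(\widetilde{\mathbb{C}^{2}},D)\to(\mathbb{C}^{2},0)$ is its minimal desingularisation: the exceptional divisor $D\simeq\mathbb{P}^{1}_{\mathbb{C}}$ is the unique component, it is dicritical, and $\val(D)=0$. Consequently $\mathcal{R}$ possesses no isolated separatrix, and every separatrix is a line through the origin, that is, a curvet of $D$.

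Write $S=L_{1}\cup\cdots\cup L_{N}$ as a union of $N$ distinct lines through the origin and fix the canonical adapted balanced equation $F$ in which each $L_{i}$ appears with coefficient exactly $1$ in $(F)_{0}$. Since any balanced equation of $\mathcal{R}$ involves only curvets of $D$ with integer coefficients summing to $2-\val(D)=2$, such an $F$ has the form
\[
(F)_{0}=L_{1}+\cdots+L_{N}+\sum_{j=1}^{r}b_{j}E_{j},\qquad(F)_{\infty}=\sum_{k=1}^{s}c_{k}G_{k},
\]
with $b_{j},c_{k}\in\mathbb{N}^{*}$ and the $E_{j},G_{k}$ lines through the origin distinct from the $L_{i}$. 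The balance identity \eqref{eq:1-balance} then reads
\[
N+\sum_{j=1}^{r}b_{j}-\sum_{k=1}^{s}c_{k}=2,\qquad\text{i.e.}\qquad\sum_{k=1}^{s}c_{k}-\sum_{j=1}^{r}b_{j}=N-2.
\]

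The two intersection numbers on the left-hand side are now immediate, because any two distinct lines through the origin in $\mathbb{C}^{2}$ meet transversally at $0$ with multiplicity one, and neither $(F)_{\infty}$ nor $(F)_{0}\setminus S$ contains any component of $S$. Therefore
\[
(S,(F)_{\infty})_{0}=\sum_{i,k}c_{k}(L_{i},G_{k})_{0}=N\sum_{k=1}^{s}c_{k},\qquad(S,(F)_{0}\setminus S)_{0}=\sum_{i,j}b_{j}(L_{i},E_{j})_{0}=N\sum_{j=1}^{r}b_{j},
\]
and subtracting while substituting the balance identity above yields the desired value $N(N-2)=N^{2}-2N$.

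No step is really an obstacle: the whole argument is a combinatorial bookkeeping of \eqref{eq:1-balance} combined with the transversality of distinct lines through the origin. The only mild point of attention is the correct choice of the canonical adapted balanced equation, i.e.\ with each branch of $S$ carrying multiplicity one, which matches the usage made throughout the paper, for instance in Example~\ref{example-reduced} and in the computations of Subsection~\ref{subsection-node}.
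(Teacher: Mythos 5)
Your proof is correct, and since the paper dismisses this lemma as ``easy'' and gives no argument, your bookkeeping via the balance identity \eqref{eq:1-balance} together with the transversality of distinct lines through the origin is exactly the intended computation. Your remark that the adapted balanced equation should carry each branch of $S$ with coefficient one is the right normalization and matches the paper's usage.
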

\noindent
Therefore, according to Theorem~\ref{thm-poincare-dic}, we have
\begin{eqnarray*}
d_{0}^{2}-6d_{0} & = & \sum_{s\in \sing(S)(\mathcal{P}_{\alpha})\cap S}
(S,(F_{s})_{\infty})-(S,(F_{s})_{0}\backslash S)\\
 & =3 & \sum_{i=1}^{4}N_{i}^{2}-2N_{i}.
\end{eqnarray*}
Thus, we find
\begin{equation}
\label{eq-deg-pencil}
d_{0}=3+\sqrt{9+3\sum_{i=1}^{4}N_{i}^{2}-2N_{i}} \ .
\end{equation}
Now, in view of the description in \cite{mcquillan2001},
the numbers $N_{i}$ can be computed in the following way: let us write $\alpha=\alpha_{1}/\beta_{1}$,
where $\alpha_{1},\ \beta_{1}\in\mathbb{Z}\left[j\right]$ are relatively prime.
Then
\[
N_{1}=N\left(\alpha_{1}\right),\ N_{2}=N\left(\beta_{1}\right),\ N_{3}=N\left(\alpha_{1}-\beta_{1}\right)\textup{ and }N_{4}=N\left(\alpha_{1}+j\beta_{1}\right) ,
\]
where $N\left(a+jb\right)=a^{2}+b^{2}-ab.$ Therefore, the radicand in equation~\eqref{eq-deg-pencil} is
\begin{eqnarray*}
9+3\sum_{i=1}^{4}N_{i}^{2}-2N_{i} \ = \
9+3\left(N\left(\alpha_{1}\right)^{2}-2N\left(\alpha_{1}\right)+N\left(\beta_{1}\right)^{2}-
2N\left(\beta_{1}\right)\right) \\
  +3\left(N\left(\alpha_{1}-\beta_{1}\right)^{2}-2N\left(\alpha_{1}-\beta_{1}\right)+
 N\left(\alpha_{1}+j\beta_{1}\right)^{2}-2N\left(\alpha_{1}+j\beta_{1}\right)\right)
\end{eqnarray*}
This gives
 \[d_{0} = N\left(\alpha_{1}\right)+N\left(\beta_{1}\right)+N\left(\alpha_{1}-\beta_{1}\right)+N\left(\alpha_{1}+j\beta_{1}\right)
,\]
which is consistent with \cite{puchuri2013}.


\section{Topologically bounded invariants of a singularity}

\label{section-topologically-bounded}

The singularity $px\dd y-qy\dd x = 0$, where $p$ and $q$ are coprime
non-negative numbers, can be desingularized by a blowing-up process
following Euclid's algorithm applied to the couple $\left(p,q\right)$. The algebraic multiplicity of the non-isolated separatrix
is $\min\left\{ p,q\right\}$, whereas its Milnor number
  is $1$ for any $p$ and $q.$ Thus, neither the length
of the whole reduction process of a foliation nor the algebraic multiplicity
of a general invariant curve can be bounded by a function of the Milnor
number. This remark brings us to introduce the following definition:

\begin{defn}
Let ${\rm Fol}(\mathbb{C}^{2},0)$  denote the set germs of singular foliations
at $(\mathbb{C}^{2},0)$.
A \emph{numerical datum} in ${\rm Fol}(\mathbb{C}^{2},0)$ is a function
  $\mathfrak{N}: {\rm Fol}(\mathbb{C}^{2},0) \to \mathbb{Z}$.
We say that  the numerical datum $\mathfrak{N}$ is  \emph{topologically bounded}
if there exists a function $\psi:\mathbb{N} \to \mathbb{N}$ such that,
for any  foliation $\mathcal{F}$,
\[
\left|\mathfrak{N}\left(\mathcal{F}\right)\right|\leq\psi\left(\mu_{0}\left(\mathcal{F}\right)\right),
\]
where $\mu_{0}\left(\mathcal{F}\right)$ is the Milnor number of $\mathcal{F}.$
\end{defn}

Notice that the sum of two topologically bounded numerical data is topologically
bounded. The algebraic multiplicity of a foliation is topologically
bounded, since
\[
\nu_{0}\left(\mathcal{F}\right) \frac{\left( \nu_{0} \left(\mathcal{F}\right)+1\right)}{2}\leq \mu_{0}\left(\mathcal{F}\right).
\]
Indeed, if $\mathcal{F}\in\textup{Fol}\left(\mathbb{C}^{2},0\right)$
is given by $\omega=a\textup{d}x+b\textup{d}y$ then
\[
\mu=\dim_{\mathbb{C}}\frac{\mathbb{C}\left\{ x,y\right\} }{\left(a,b\right)}=\dim_{\mathbb{C}}\frac{\mathbb{C}\left\{ x,y\right\} }{\left(x,y\right)^{\nu}}\times\frac{\left(x,y\right)^{\nu}}{\left(a,b\right)}\geq\dim_{\mathbb{C}}\frac{\mathbb{C}\left\{ x,y\right\} }{\left(x,y\right)^{\nu}}=\frac{\nu\left(\nu+1\right)}{2},
\]
where $\mu=\mu_{0}\left(\mathcal{F}\right)$ and $\nu=\nu_{0}\left(\mathcal{F}\right)$.
However,  as illustrated by the above example,  the length of the reduction process is not topologically bounded. In \cite{corral2006},
N. Corral and P. Fern\'andez proved the following:
\begin{thm}
\label{thm:Let--be}
Let $\mathcal{F}$ be a germ of foliation at $(\mathbb{C}^{2},0)$
and $S$ be the union of its formal isolated separatrices. Then the
algebraic multiplicity of $S$ and the length of its reduction process
are topologically bounded.
\end{thm}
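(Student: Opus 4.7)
My plan is to split the statement into two topological bounds: a bound on the algebraic multiplicity $\nu_{0}(S)$, and, from this, a bound on the length of the reduction process of $S$. I will use that $S$ is a plane curve germ whose branches are formal isolated separatrices of $\mathcal{F}$, and hence become smooth, disjoint and transverse to the exceptional divisor after the reduction of $\mathcal{F}$. In particular, the strict transform of $S$ never meets a dicritical component of $\mathcal{D}$, so the reduction of $S$ is a sub-process of the non-dicritical part of the reduction of $\mathcal{F}$.

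The central inequality I would aim to establish is $\nu_{0}(S) \leq \nu_{0}(\mathcal{F}) + 1$. Combined with the elementary estimate $\nu_{0}(\mathcal{F})(\nu_{0}(\mathcal{F})+1)/2 \leq \mu_{0}(\mathcal{F})$ recalled just above the theorem, this immediately yields a topological bound on $\nu_{0}(S)$. To prove the inequality, I would induct on the length of the reduction of $\mathcal{F}$: the base case handles reduced singularities explicitly, and for the inductive step I would write $\omega = a\,\d x + b\,\d y$, use that the equation $f$ of $S$ divides $\omega \wedge \d f$, and track how $\nu_{0}(f)$ evolves under a single blow-up. Since the strict transform of $S$ lands only at points where the new exceptional component is $\mathcal{F}$-invariant, all relevant comparisons reduce locally to the classical non-dicritical situation, where the bound $\nu \leq \nu_{\mathcal{F}} + 1$ for the strict transform of an isolated separatrix is standard.

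Once $\nu_{0}(S)$ is topologically bounded, I would bound the length of the reduction of $S$ by a purely curve-theoretic argument. One clean option is to use a generic polar curve $P$ of $\mathcal{F}$ as an auxiliary tool: the intersection number $(S,P)_{0}$ is bounded in terms of $\mu_{0}(\mathcal{F})$ via a Noether-type formula of the shape $(\mathcal{F},P)_{0} = \mu_{0}(\mathcal{F}) + \nu_{0}(\mathcal{F}) - 1$, together with $(S,P)_{0} \leq (\mathcal{F},P)_{0}$ for the isolated part, while each blow-up in the reduction tree of $S$ strictly decreases $(S,P)_{0}$ (both $S$ and $P$ pass through the blown-up center, contributing at least one unit of multiplicity). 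The total number of blow-ups is therefore dominated by $\mu_{0}(\mathcal{F}) + \nu_{0}(\mathcal{F})$, which is itself topologically bounded.

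The main obstacle I expect to meet is in the inductive step for the multiplicity bound, specifically at singular points of $E^{*}\mathcal{F}$ sitting on non-dicritical components but adjacent to dicritical ones, where local behavior mixes contributions from isolated separatrices and nearby curvets. Here I would rely on the machinery of balanced equations from Section~\ref{section-balanced-equation}, in particular Proposition~\ref{prop-valuation-general} together with the pure valuations $\nu^{*}_{D}(\mathcal{F})$, which are precisely designed to disentangle the isolated and the dicritical contributions along each component of $\mathcal{D}$ and should provide the bookkeeping required to carry the induction through.
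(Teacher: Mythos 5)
A preliminary remark: the paper does not prove this statement --- it is quoted from Corral and Fern\'andez-S\'anchez \cite{corral2006} and used as a black box (e.g.\ in the proof of Lemma~\ref{lem:The-multiplicity-of}) --- so your proposal must stand on its own. It does not: there are two genuine gaps.

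The central inequality $\nu_{0}(S)\leq\nu_{0}(\mathcal{F})+1$ is false for dicritical foliations. If $\hat{F}$ is a balanced equation, Proposition~\ref{prop:Equa-Ba} gives $\nu_{0}(\hat{F})=\nu_{0}(\mathcal{F})+1-\tau(\mathcal{F})$, and since $S$ is precisely the non-curvet part of $(\hat{F})_{0}$,
\[
\nu_{0}(S)=\nu_{0}(\mathcal{F})+1-\tau(\mathcal{F})+\nu_{0}\bigl((\hat{F})_{\infty}\bigr)-\nu_{0}\bigl((\hat{F})_{0}\setminus S\bigr).
\]
Whenever the reduction contains a dicritical component of valence at least $3$, none of valence at most $1$, and no tangent saddle-nodes, the right-hand side exceeds $\nu_{0}(\mathcal{F})+1$ by $\nu_{0}((\hat{F})_{\infty})>0$. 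A concrete instance is the foliation with meromorphic first integral $\bigl((y^{2}-x^{4})^{2}-x^{9}\bigr)/x^{8}$: one checks that $\nu_{0}(\mathcal{F})=4$, that the second blow-up creates a dicritical component of valence $4$ (the transformed foliation near it is $\d(u^{4}-2u^{2}-x)$ with $u=y/x^{2}$, tangent to the divisor at $u=0,\pm1$), and that the isolated separatrices are $\{x=0\}$ together with three branches of multiplicity $2$, so $\nu_{0}(S)=7>5$. The bound actually proved in \cite{corral2006} is in terms of $\mu_{0}(\mathcal{F})$, not of $\nu_{0}(\mathcal{F})$, and your induction cannot produce even that as sketched: at a dicritical blow-up the isolated separatrices distribute over the adjacent singular points, whose number and multiplicities are governed by the Milnor number, so the induction hypothesis must be phrased in terms of $\mu$ from the outset and one must control the Milnor numbers of all intermediate singularities, as in Lemma~\ref{lem:Let--be}. (Your appeal to Proposition~\ref{prop-valuation-general} does not repair this; the pure valuations record the balanced equation, which is exactly the object that differs from $S$ by the curvet and pole terms above.)

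The second gap is the passage from a multiplicity bound to a bound on the length of the reduction of $S$. Your argument requires the strict transforms of the auxiliary polar curve $P$ to pass through \emph{every} center blown up in the reduction of $S$; otherwise $(S,P)_{0}$ does not drop at that step. This fails already at the level of curves: $S=\{y^{2}=x^{2k+1}\}$ meets a generic smooth curve with intersection number $2$, yet its reduction has length growing with $k$. Generic polars do pass through certain distinguished infinitely near points of the separatrices (this is the content of the polar decomposition theorems of Merle, Casas-Alvero and Corral), but not through all of them, and nothing in your sketch establishes the property you need. More fundamentally, no bound on $\nu_{0}(S)$ alone can bound the reduction length; one must bound the multiplicities of $S$ at all infinitely near points, which is where the real work of \cite{corral2006} --- and of Lemma~\ref{lem:Let--be} and Theorem~\ref{thm:Let--be-1} in the present paper --- is concentrated.
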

In this section, we intend to extend this result to a wider class
of invariant curves. This is the content of:

\begin{maintheorem}
\label{thm:Let--be-1} Let $\mathcal{F}$ be a germ of foliation in
$\mathbb{C}^{2}$. For a fixed $r \in \mathbb{N}$,  let $S$ be   formed by the union of the
following curves:
\begin{enumerate}[(a)]
 \item all  formal isolated separatrices;
 \item one copy of    non-isolated separatrix attached to each dicritical component
of valence one;
 \item $r$ copies of  non-isolated separatrices attached to
each dicritical component of valence three or greater.
\end{enumerate}
Then the
algebraic multiplicity of $S$ and the length of its reduction process
are topologically bounded.
\end{maintheorem}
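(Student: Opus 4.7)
Our plan is to reduce the statement to Theorem~\ref{thm:Let--be} of Corral–Fern\'andez via an analysis of the dual tree $T$ of the exceptional divisor $\mathcal{D}$ of the minimal reduction of $\mathcal{F}$. Write $S = S_{\rm iso} \cup S_{\rm curv}$, where $S_{\rm iso}$ is formed by the isolated separatrices~(a) and $S_{\rm curv}$ by the chosen curvets~(b) and~(c). By Theorem~\ref{thm:Let--be} the algebraic multiplicity and reduction length of $S_{\rm iso}$ are already topologically bounded, so it suffices to produce analogous bounds for $S_{\rm curv}$.

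The first step is a combinatorial bound on the number of dicritical components of valence $1$ and of valence $\geq 3$ in $T$. Since $T$ is a tree with $n$ vertices and $n-1$ edges, the identity $\sum_{i \geq 0}(i-2)n_{i} = -2$ (where $n_{i}$ is the number of vertices of valence $i$) gives $n_{1} \geq 2 + n_{\geq 3}$; hence the number of valence-$\geq 3$ dicriticals is dominated by the number of leaves. Non-dicritical leaves of $T$ each carry at least one isolated separatrix of $\mathcal{F}$ (through the non-divisor separatrix at the corner attaching the leaf, by the Camacho–Sad Separatrix Theorem), so their number is bounded by the number of isolated separatrices, which is topologically bounded via Theorem~\ref{thm:Let--be}. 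To bound the number $n_{1}^{\rm dic}$ of valence-$1$ dicritical leaves I would induct on $\mu_{0}(\mathcal{F})$, analysing the first blow-up $\pi$ and arguing that each valence-$1$ dicritical leaf produces, either directly or after the induction step, a feature (isolated separatrix, tangency, or reduced singularity) contributing to $\mu_{0}(\mathcal{F})$, so that $n_{1}^{\rm dic}$ grows at most linearly in $\mu_{0}(\mathcal{F})$ plus the previously bounded quantities.

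For each curvet $C \subset S_{\rm curv}$ attached to a dicritical component $D$, one has $\nu_{0}(C) = \rho(D)$. Applying Proposition~\ref{prop-valuation-general} to a balanced equation of $\mathcal{F}$, $\rho(D)$ equals the sum of pure valuations $\nu^{*}_{D'}(\mathcal{F})$ along the unique path in $T$ from the root to $D$; by the recursive formula~\eqref{purevaluation} this sum is controlled by $\nu_{0}(\mathcal{F})$ together with the length of the path restricted to non-valence-$2$ blow-ups, since the increment at any valence-$2$ step is governed solely by the pure valuations at the endpoints. The reduction length of $S$ is the size of the subtree of $T$ spanned by the paths from the root to the curvets and isolated separatrices of $S$; since no curvet of $S_{\rm curv}$ passes through a valence-$2$ dicritical, this subtree skips the valence-$2$ chains and contains only topologically bounded many vertices, by the first step.

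The main obstacle is the handling of valence-$2$ dicritical chains: their length is unbounded in $\mu_{0}(\mathcal{F})$, as the Euclid-algorithm example $px\,\dd y - qy\,\dd x$ demonstrates, and the entire scheme depends on the fact that the hypotheses of the theorem prohibit curvets through such components. The reduction tree of $S$ is thereby forced to bypass these chains, and all topological bounds reduce to the two steps above. Without this prohibition the multiplicities $\rho(D)$ of valence-$2$ dicriticals along a chain would accumulate into $\nu_{0}(S)$ and break the bound, and the reduction length of $S$ would inherit the length of the Euclid chain, precisely as occurs for $px\,\dd y - qy\,\dd x$ when $q > p > 1$.
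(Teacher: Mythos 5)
Your high-level plan (split $S$ into isolated separatrices plus chosen curvets, invoke Theorem~\ref{thm:Let--be} for the former, and bound the combinatorics and multiplicities of the valence-$1$ and valence-$\geq 3$ dicritical components for the latter) is the right one, and your identification of valence-$2$ chains as the obstruction matches the paper. But there are genuine gaps. First, your combinatorial step rests on the claim that every non-dicritical leaf of the dual tree carries an isolated separatrix; this is false --- minimal reductions routinely contain ``dead'' extremities carrying no separatrix transverse to them (already for the cusp $y^{2}=x^{3}$ two of the three components of the reduction divisor are such extremities), so the count of leaves, and hence of valence-$\geq 3$ vertices via $\sum(i-2)n_{i}=-2$, is not controlled this way. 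The paper instead bounds the valence-$\geq 3$ components by showing they must all appear by the time the isolated separatrices are desingularized (each such component forces isolated separatrices on the branches emanating from it), so their number and depth are bounded by Theorem~\ref{thm:Let--be}. Second, your treatment of valence-$1$ dicritical components is only a declared intention (``I would induct on $\mu_{0}$\dots''), whereas what is actually needed --- and what the paper proves via Proposition~\ref{prop:Equa-Ba}, writing $\nu_{c}=\sum_{v(D)=1}\rho(D)+(\text{saddle-node terms})$ after first reducing the isolated separatrices --- is a bound on both their \emph{number and their multiplicities} $\rho(D)$; the multiplicity bound is indispensable later and your pure-valuation recursion does not visibly deliver it, since along a chain the valuations in formula~\eqref{purevaluation} accumulate.

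Third, and most seriously, the assertion that the reduction length of $S$ equals the size of the subtree spanned by the root-to-curvet paths, and that this subtree ``skips the valence-$2$ chains,'' is unsubstantiated. A chosen curvet attached to a valence-$1$ dicritical component can a priori lie at unbounded depth; ruling this out is the heart of the paper's proof, which descends via the Milnor-number formulas~\eqref{eq:1-1} to points of multiplicity $\leq 1$ in a bounded number of blow-ups and then performs a case analysis of resonant, Dulac and nilpotent dicritical singularities. The decisive point is the Dulac case $x\,\dd y-ny\,\dd x$ with the divisor tangent to $y=0$ at order $p<n$: there the valence-$1$ component created has multiplicity $\rho(D_{0})\geq p$, so the multiplicity bound from Lemma~\ref{lem:The-multiplicity-of} bounds $p$, hence the $p+1$ extra blow-ups needed to separate $S_{c}$ from the divisor. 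Nothing in your argument substitutes for this analysis, so the proposal as written does not close the proof.
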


\subsection{Topological boundedness for saddle-nodes singularities. }

In this section, we are going to prove the following lemma:
\begin{lem}
\label{lem:Let--be} The following   data, concerning singularities appearing along the reduction process, are topologically
bounded:
\begin{enumerate}[(a)]
\item  Milnor numbers;
\item The number of saddle-node and their weak indices.
\end{enumerate}
\end{lem}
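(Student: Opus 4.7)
The plan is to establish (a) by induction along the reduction process of $\mathcal{F}$ and to derive (b) from (a) as a corollary. The key technical input is a Noether-type formula: for a point blow-up $\pi$ at a singular point $p$ of an intermediate foliation $\mathcal{G}$ with algebraic multiplicity $\nu=\nu_p(\mathcal{G})$, one has an identity of the form
\[
\mu_p(\mathcal{G}) = \sum_{q\in \sing(\pi^{*}\mathcal{G})\cap D} \mu_q(\pi^{*}\mathcal{G}) + P_{\epsilon}(\nu),
\]
where $P_{\epsilon}(\nu)$ is a polynomial correction depending on whether the exceptional component $D$ is invariant ($\epsilon=1$) or dicritical ($\epsilon=0$). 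In the non-dicritical case this is the classical identity with $P_{1}(\nu)=\nu(\nu-1)$, which yields directly $\mu_q\leq \mu_p$ for every new singularity $q$. The dicritical case is subtler, as the blow-up of $\dd(xy)=0$ already shows that the aggregate $\sum_q \mu_q$ can strictly exceed $\mu_p$.

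To bypass this, I would combine the formula with two prior inputs. First, the inequality $\nu_p(\nu_p+1)/2\leq \mu_p$ recalled just before the statement, which converts any bound on $\mu_p$ into a bound on $\nu_p$. Second, Theorem~\ref{thm:Let--be}, which topologically bounds both the algebraic multiplicity and the length of the reduction tree of the isolated separatrices of $\mathcal{F}$. Every singular point of $E^{*}\mathcal{F}$ can be reached from $0\in\mathbb{C}^{2}$ along a branch of the reduction tree whose length is controlled either by Theorem~\ref{thm:Let--be} for branches following only non-dicritical components, or by a short extension past each dicritical creation; by Lemma~\ref{lemma-balanced} a balanced equation is preserved at each intermediate stage, so the Noether formula can be propagated downward to yield a uniform bound $\mu_q\leq \psi(\mu_0(\mathcal{F}))$, proving (a).

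For (b), the weak index of a saddle-node with the normal form $(\zeta x^{k}-k)y\,\dd x+x^{k+1}\dd y$ coincides with its Milnor number, since a direct computation of the Jacobian ideal gives
\[
\mu_q \;=\; \dim_{\mathbb{C}}\mathbb{C}\{x,y\}/((\zeta x^{k}-k)y,\,x^{k+1}) \;=\; k+1.
\]
Thus (a) bounds topologically the weak indices of all saddle-nodes in the reduction. Finally, every reduced singularity $q$ contributes at least $1$ to $\sum_{q\in\sing(E^{*}\mathcal{F})}\mu_q$, so the number of saddle-nodes is dominated by this sum, which in turn is controlled by telescoping the Noether formula down the reduction tree and invoking (a) at each intermediate step to bound the correction terms $P_{\epsilon}(\nu)$.

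The main obstacle I expect is the dicritical step in the Noether formula, where individual $\mu_q$ may approach $\mu_p$ itself rather than being strictly smaller. Handling this will require treating corner and non-corner points on dicritical components separately and exploiting the fact that, by minimality of $E$, each dicritical component eventually meets invariant components along which the controlled non-dicritical estimate applies, thus giving the propagated bound without blow-up in the length of the reduction.
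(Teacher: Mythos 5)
Your plan founders on the correction term in the blow-up formula for the Milnor number. The classical identities, recalled in the paper as \eqref{eq:1-1}, are $\mu_{p}=\nu^{2}-\nu-1+\sum_{q}\mu_{q}$ in the non-dicritical case and $\mu_{p}=\nu^{2}+\nu-1+\sum_{q}\mu_{q}$ in the dicritical case --- not $P_{1}(\nu)=\nu(\nu-1)$ (test this on a reduced linear singularity: one blow-up produces two singular points of Milnor number $1$ from a point of Milnor number $1$). Consequently the situation is the opposite of what you describe: the dicritical case is harmless, since $\nu^{2}+\nu-1\geq 1$ for $\nu\geq 1$ forces $\sum_{q}\mu_{q}<\mu_{p}$, whereas the non-dicritical case with $\nu_{p}=1$ gives $\sum_{q}\mu_{q}=\mu_{p}+1$, so a new singularity can have \emph{strictly larger} Milnor number. (Your motivating example $\dd(xy)=0$ is not dicritical; it is a reduced saddle.) This non-dicritical, multiplicity-one case is precisely where the work lies, and your proposal does not engage with it: the paper disposes of it by noting that such a point, being non-reduced, is either of the form $px\,\dd y-qy\,\dd x+\cdots$ (all subsequent Milnor numbers are $\leq 1$) or nilpotent $\dd(y^{2}+x^{n})+x^{p}U(x)\,\dd y$ (one blow-up raises the Milnor number from $n-1$ to $n$, after which it decreases), yielding the uniform bound $\mu_{q}\leq\mu_{0}(\mathcal{F})+1$. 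Invoking Theorem~\ref{thm:Let--be} and Lemma~\ref{lemma-balanced} to ``propagate the formula downward'' is not a substitute for this case analysis.

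The count of saddle-nodes in (b) has a second, independent gap. You dominate the number $N$ of saddle-nodes by $\sum_{q\in\sing(E^{*}\mathcal{F})}\mu_{q}$ and claim this sum is controlled by telescoping the blow-up formula along the reduction tree. It is not: each non-dicritical blow-up at a multiplicity-one point adds $1$ to that running sum, and the number of such blow-ups --- the length of the reduction process --- is not topologically bounded, as the example $px\,\dd y-qy\,\dd x$ opening the section shows ($\mu_{0}=1$, reduction length governed by Euclid's algorithm, and a number of reduced corner singularities each contributing $1$ to the sum). The paper instead proves $N\leq\mu_{0}(\mathcal{F})$ by a direct induction on the length of the reduction, using that in the problematic $\nu=1$ situations either no saddle-node is created (the case $px\,\dd y-qy\,\dd x+\cdots$) or at most one is, while $\mu_{0}\geq 2$ (the nilpotent case). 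Your identification of the weak index of a saddle-node with its Milnor number is correct and coincides with the paper's argument for that part.
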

\begin{proof}
Let $E_{1}$ be a single blow-up at the singular point $p$ and let $D_{1} = E_{1}^{-1}(p)$.
The   following formulas are classical (see \cite{mattei1980}):
\begin{equation}
\label{eq:1-1}
\mu_{p}\left(\mathcal{F}\right) \ =\
 \begin{cases}
 \displaystyle{ \nu_{p}^{2}\left(\mathcal{F}\right)-\nu_{p}\left(\mathcal{F}\right)-1+\sum_{p'\in D_{1}}\mu_{p^{'}}\left(E_{1}^{*}\mathcal{F}\right)
 \textup{ if  $E_{1}$ is non-dicritical} }\medskip \\
 \displaystyle{  \nu_{p}^{2}\left(\mathcal{F}\right)+\nu_{p}\left(\mathcal{F}\right)-1+\sum_{p'\in D_{1}}\mu_{p^{'}}\left(E_{1}^{*}\mathcal{F}\right)
  \textup{ else.} }
  \end{cases}
\end{equation}
Milnor numbers
usually decrease
with  blow-ups. Suppose however that, in the reduction process of $\mathcal{F}$,    there is a blow-up for which this fails.
We focus at  the first moment  this happens, denoting by $E$ the sequence of blow-ups
  done so far and  by $c$ the singular point of $E^{*}\mathcal{F}$ that, once blown-up,
produces a singularity with higher or equal Milnor
number. According to formulas \eqref{eq:1-1}, such blow-up has to
be non-dicritical and the algebraic multiplicity must satisfy
\[
\nu_{c}\left(E^{*}\mathcal{F}\right)=1.
\]
$E$ is an intermediate step in the minimal reduction process, thus  $c$ is not a reduced singularity.   Thus, looking at all possible cases, we are limited to the following two:
\begin{enumerate}[(i)]
\item $\left(E^{*}\mathcal{F}\right)_{c}$ is given by $\omega=px\textup{d}y-qx\textup{d}x+\cdots$.
In that case,  all Milnor numbers  in the reduction
process are $1$ or less.
\item $\left(E^{*}\mathcal{F}\right)_{c}$ is nilpotent and, following \cite{cerveau1988}, is given locally
by
\[
\textup{d}\left(y^{2}+x^{n}\right)+x^{p}U\left(x\right)\textup{d}y.
\]
Here $\mu_{c}\left(\mathcal{F}\right)=n-1$. One blow-up yields
a singular point with algebraic multiplicity $2$ and Milnor
number $n$. After this point, Milnor numbers decrease.
Anyhow,  along the
 blowing-up process starting at $c$, they cannnot exceed $n$.
\end{enumerate}
We therefore conclude that, along
the reduction process of $\mathcal{F}$,   Milnor numbers     cannot exceed $\mu_{0}\left(\mathcal{F}\right)+1.$

For the second part of the lemma, consider the formal normal form
of  a saddle-node singularity:
\[
x^{p+1}\dd y+y\left(p-\lambda x^{p}\right)\dd x,\ p\geq1.
\]
Its Milnor number  and   its weak index  are both $p+1$, which gives the topological
boundedness for the latter. Now, let   $N$ be the number of saddle-nodes
 in the  reduction of $\mathcal{F}$. We are going
to prove by induction on the length of the reduction process that
\[
N \leq \mu_{0}\left(\mathcal{F}\right).
\]
If the length is $0$, then the singularity is reduced. Thus $N \leq 1 \leq \mu_{0}\left(\mathcal{F}\right)$. Let $k \geq 1$ and
suppose  that the proposition is proved for foliations with    reduction process
of length smaller than $k$. Consider a foliation $\mathcal{F}$
with reduction process  of length $k$. Blow-up  $\mathcal{F}$ once.
The resulting foliation $E_{1}^{*}\mathcal{F}$ has a finite number of
singularities $c_{1},\ldots,c_{p}$. For any $i = 1,\ldots,p$, the local foliation $\left(E_{1}^{*}\mathcal{F}\right)_{c_{i}}$
has a   reduction process  of length less than $k$, giving rise to $n_{i}$
saddle-node singularities. Evidently  $\sum_{i=1}^{p}n_{i}=N$. Besides, by induction, $n_{i} \leq \mu_{c_{i}}\left(E_{1}^{*}\mathcal{F}\right)$ for $i = 1,\ldots,p$. We have:
\begin{enumerate}[(i)]
\item If $\nu_{0}\left(\mathcal{F}\right)\geq2$ then, according to \eqref{eq:1-1},
\[
\mu_{0}\left(\mathcal{F}\right) \geq \sum_{i=1}^{p} \mu_{c_{i}}\left(E_{1}^{*}\mathcal{F}\right) \geq \sum_{i=1}^{p}n_{i}=N.
\]
\item If $\nu_{0}\left(\mathcal{F}\right)=1$, then either $\mathcal{F}$ is of the form
$\omega=px\textup{d}y-qx\textup{d}x+\cdots$, and no
 saddle-nodes appear in its reduction, or $\mathcal{F}$
is nilpotent. In the second case, $\mu_{0}\left(\mathcal{F}\right)\geq2$
and $N\leq1$ and  the inequality also holds.
\end{enumerate}
\end{proof}

\subsection{Dicritical components of valences $\val(D)=1$ and $\val(D) \geq 3$}

We have the following properties concerning  dicritical components
of valence one:
\begin{lem}
\label{lem:The-multiplicity-of}
The number and the multiplicities of the dicritical components of
valence one are topologically bounded. \end{lem}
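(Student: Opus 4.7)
The plan is to combine the control of Milnor numbers supplied by Lemma~\ref{lem:Let--be} with the Milnor-number formula~\eqref{eq:1-1} and the recursive behaviour of the component multiplicities $\rho$ along the blow-up tree of $\mathcal{F}$.

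First I would fix $M=\psi_{1}(\mu_{0}(\mathcal{F}))$, the bound from Lemma~\ref{lem:Let--be}(a) on the Milnor numbers at every infinitely near point of $\mathcal{F}$; the inequality $\nu(\nu+1)/2\leq\mu$ then gives $\nu_{p}\leq\sqrt{2M}$ at every such point. To bound the \emph{number} of valence-one dicritical components, I would bound the total number of dicritical blow-ups: in formula~\eqref{eq:1-1} each dicritical blow-up at $p$ contributes $\nu_{p}^{2}+\nu_{p}-1\geq 1$ and each non-dicritical blow-up with $\nu_{p}\geq 2$ contributes $\nu_{p}^{2}-\nu_{p}-1\geq 1$, while only the non-dicritical blow-ups with $\nu_{p}=1$ contribute the negative $-1$. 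The point that saves the argument is that for the latter the children satisfy $\sum\mu_{p'}=\mu_{p}+1$, so any chain of $\nu=1$ non-dicritical blow-ups makes the Milnor numbers increase strictly along it; the cap $M$ then forces each such chain to terminate after at most $M$ steps, either at a reduced singularity, a dicritical blow-up, or a blow-up of multiplicity at least two. Telescoping these contributions yields a bound on the total number of dicritical blow-ups, and hence on the number of dicritical components of any valence, in terms of $\mu_{0}(\mathcal{F})$ alone.

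For the \emph{multiplicities}, the plan is to appeal to the standard recursion $\rho(D)=\rho(D')$ when $D$ arises by blowing up a non-corner smooth point of a component $D'$ and $\rho(D)=\rho(D_{1}')+\rho(D_{2}')$ when $D$ arises from a corner $D_{1}'\cap D_{2}'$, with $\rho=1$ for the first exceptional component. Since subsequent blow-ups can never decrease a valence, a final valence-one dicritical $D$ must have been created at a non-corner smooth point, so $\rho(D)=\rho(D')$ for some earlier $D'$. Unrolling the recursion down to the root, $\rho(D)$ equals $1$ plus the sum of the additive jumps produced by the corner blow-ups lying in the ancestry of $D$ in the tree. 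Each jump is bounded by twice the maximum $\rho$ available at the time of that blow-up, so an induction on the depth of $D$ will deliver the desired bound provided the number of corner blow-ups in the ancestry is itself controlled by a function of $\mu_{0}(\mathcal{F})$; this in turn follows from the same Milnor-budget analysis as above, since corner blow-ups at singularities with $\nu_{p}\geq 2$ are directly counted by the positive contributions in~\eqref{eq:1-1}, whereas $\nu_{p}=1$ corner blow-ups fall under the chain analysis of the previous paragraph.

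The principal obstacle is turning the Milnor-budget accounting into a clean uniform bound when long chains of $\nu=1$ non-dicritical corner blow-ups can in principle produce a Fibonacci-like accumulation of $\rho$ along the ancestry while contributing negatively to the Milnor budget. The decisive leverage is precisely the cap $M$ from Lemma~\ref{lem:Let--be}, which forbids any such chain from being indefinitely long and, combined with the fact that only the terminal reduced, dicritical, or high-multiplicity blow-up in each chain absorbs the accumulated negative contribution, forces the total accumulation of $\rho$ to depend only on $\mu_{0}(\mathcal{F})$.
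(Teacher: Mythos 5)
Your argument hinges on the claim that along a chain of non-dicritical blow-ups at points of multiplicity one the Milnor numbers increase strictly, so that the cap $M$ from Lemma~\ref{lem:Let--be} terminates every such chain after at most $M$ steps. This is false, and the counterexample is the one the paper uses to open Section~\ref{section-topologically-bounded}: for $px\,\dd y-qy\,\dd x$ the reduction is an arbitrarily long chain of non-dicritical blow-ups at points with $\nu=1$ and $\mu=1$. Formula~\eqref{eq:1-1} only gives $\sum_{p'}\mu_{p'}(E_1^*\mathcal{F})=\mu_p(\mathcal{F})+1$, i.e.\ the \emph{sum} over the children grows by one; at each step the extra unit is absorbed by a new reduced corner with $\mu=1$, while the child that gets blown up again keeps $\mu=1$ indefinitely. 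Consequently your Milnor budget bounds neither the number of $\nu=1$ blow-ups, nor the number of dicritical blow-ups (the telescoping leaves an uncontrolled negative term counting the $\nu=1$ blow-ups), nor the number of corner blow-ups in the ancestry of a given component --- which is exactly the quantity your multiplicity estimate needs. The same example with $p,q\geq 2$ produces, with $\mu=1$, a dicritical component of unbounded multiplicity; it has valence two, so the lemma survives, but your ancestry argument, which uses the valence-one hypothesis only through the creation-at-a-free-point observation, would wrongly bound it.

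The paper's proof runs through a different mechanism and uses two inputs you do not invoke. First, Theorem~\ref{thm:Let--be} (Corral--Fern\'andez): the portion $E$ of the reduction that desingularizes the \emph{isolated} separatrices has topologically bounded length, which already bounds the number and multiplicities of the valence-one dicritical components appearing inside $E$. Second, for each residual singularity $c$ of $E^*\mathcal{F}$ one checks that its own reduction carries only dicritical components of valence one or two and at most one isolated separatrix, so that Proposition~\ref{prop:Equa-Ba} turns the identity $\nu_c=\nu_c(\hat F)-1+\tau$ into
\[
\nu_{c}\left(\left(E^{*}\mathcal{F}\right)_{c}\right)=\sum_{\val(D)=1}\rho(D)+(\cdots),
\]
where $(\cdots)$ involves only saddle-node data. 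Since the algebraic multiplicity $\nu_c$ and the saddle-node data are topologically bounded, so is $\sum_{\val(D)=1}\rho(D)$, which bounds both the number and the multiplicities at once. Some version of this step --- paying for the dicritical components with the algebraic multiplicity of the balanced equation --- seems unavoidable: pure Milnor-number accounting along the blow-up tree cannot succeed, precisely because the length of that tree is not a topological invariant.
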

\begin{proof}
We perform a blowing-up process  $E$ in order to
reduce all formal isolated separatrices of $\mathcal{F}$ . According
to Theorem~\ref{thm:Let--be}, the number of blow-ups composing $E$ is topologically
bounded. Then,  the dicritical components
of valence $1$ in the exceptional divisor of $E$ have topologically bounded
multiplicities.

The remaining dicritical
singularities of $E^{*}\mathcal{F}$ are of special kind: they are crossed by
at most one  isolated separatrix of $\mathcal{F}$,  which are smooth and transversal to the divisor $\DD = E^{-1}(0)$.
If  $ c \in \DD$ is a singularity of $E^{*}\mathcal{F}$, then the reduction process of $\left(E^{*}\mathcal{F}\right)_{c}$
cannot have dicritical components of valence three or greater.
To see this, we  use the  fact that each invariant connected component of the reduction divisor   carries
 an isolated separatrix, as done in the proof of  Lemma~\ref{lemma_pure_multiplicity}. If such a dicritical component
existed, then we would find isolated separatrices outside
 $\DD$, one or two, respectively if $c$ is a corner or a non-corner point of $\DD$. This is a contradiction,
 since  all isolated separatrices of $\mathcal{F}$ are reduced.

The reduction process of $\left(E^{*}\mathcal{F}\right)_{c}$ thus
contains only dicritical components of valences $1$ or $2$ and at most one isolated separatrix of $\mathcal{F}$
 attached to some invariant component of multiplicity
one. The expression for the algebraic multiplicity given by Proposition~\ref{prop:Equa-Ba} and formula~\eqref{eq-tau}
lead to
\[
\nu_{c}\left(\left(E^{*}\mathcal{F}\right)_{c}\right) = \sum_{D\textup{ dicritical, } v(D) = 1} \rho \left(D\right)+\left(\cdots\right),
\]
where the dots is an expression that depends only on the saddle-node
singularities and their weak indices. Since these data are topologically
bounded, this formula   ensures that the number and multiplicities of dicritical
 components
  of valence one  are topologically bounded.
\end{proof}

In what concerns dicritical components of
higher valences, we achieve the following:

\begin{cor} {\rm (of the proof)}
\label{cor:The-multiplicity-of}
All dicritical components of
valence three or greater appear after a topologically bounded number of blow-ups.
Besides,
their
 number,  multiplicities and   valences are also topologically bounded.
\end{cor}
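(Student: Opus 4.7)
The plan is to extract everything directly from the proof of Lemma~\ref{lem:The-multiplicity-of}. The central observation established there is the following: if $E$ denotes the blow-up process that reduces all formal isolated separatrices of $\mathcal{F}$, then for any singularity $c$ of $E^{*}\mathcal{F}$, the remaining reduction process of $(E^{*}\mathcal{F})_{c}$ contains no dicritical components of valence three or greater. Indeed, the argument used the fact that every invariant connected component of the reduction divisor carries an isolated separatrix; a dicritical component of valence $\geq 3$ produced after $E$ would force the existence of new isolated separatrices off the divisor of $E$, contradicting that the isolated separatrices of $\mathcal{F}$ are already reduced by $E$.

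From this, the first assertion is immediate: every dicritical component of $\mathcal{F}$ of valence $\geq 3$ must already appear in the process $E$. By Theorem~\ref{thm:Let--be} of Corral--Fern\'andez, the length of $E$ is topologically bounded, say by $\psi(\mu_0(\mathcal{F}))$. Consequently, the first conclusion of the corollary holds with the same bound $\psi$.

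Now I would deduce the remaining three boundedness statements as purely combinatorial consequences of the fact that the relevant components appear within the first $\psi(\mu_0(\mathcal{F}))$ blow-ups. Since each blow-up creates exactly one new component of the exceptional divisor, the total number of components produced by $E$ is at most $\psi(\mu_0(\mathcal{F}))$; this bounds simultaneously the number of dicritical components of valence $\geq 3$ and, since the valence of any component is at most the total number of components of the divisor, their valences as well. For multiplicities, I would use the standard recursion: the multiplicity $\rho(D')$ of a component created by blowing up a smooth point of the divisor on a component $D$ equals $\rho(D)$, while blowing up a corner $D_1\cap D_2$ yields a new component of multiplicity $\rho(D_1)+\rho(D_2)$. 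Starting from $\rho=1$ at the first blow-up, this gives a bound of the form $2^{\psi(\mu_0(\mathcal{F}))}$ on the multiplicity of any component produced by $E$, which is topologically bounded.

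No real obstacle is expected; the corollary is essentially a bookkeeping consequence of the proof of Lemma~\ref{lem:The-multiplicity-of}. The only small point of care is to make sure the Separatrix Theorem argument used to forbid valence $\geq 3$ components after the reduction of isolated separatrices is explicitly invoked again, since the statement treats valences $\geq 3$ uniformly and one must check that the obstruction applies both to corner and non-corner blow-ups at the remaining singularities of $E^{*}\mathcal{F}$, exactly as was done in the proof of Lemma~\ref{lem:The-multiplicity-of}.
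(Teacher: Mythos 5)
Your treatment of the first assertion, of the number, and of the multiplicities is essentially the paper's own argument: the observation from the proof of Lemma~\ref{lem:The-multiplicity-of} shows that no dicritical component of valence $\geq 3$ can be created after the process $E$ reducing the isolated separatrices, Theorem~\ref{thm:Let--be} bounds the length of $E$, and the number and the multiplicities of the components of $E^{-1}(0)$ are then bounded by elementary bookkeeping (your $2^{\psi}$ bound via the additivity of $\rho$ is fine, since $\rho(D)$ is fixed at the moment $D$ is created).

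The gap is in the valence bound. The valence $\val(D)$ is computed in the \emph{full} divisor $\DD$ of the minimal reduction of $\mathcal{F}$, not in $E^{-1}(0)$. Even though $D$ itself is created within $E$, the subsequent blow-ups needed to reduce $E^{*}\mathcal{F}$ may occur at non-corner points of $D$ (singular or tangency points of $E^{*}\mathcal{F}$ on $D$), and each such blow-up attaches a new component of $\DD$ to $D$, increasing $\val(D)$ beyond anything visible in $E^{-1}(0)$. So your inequality ``valence $\leq$ number of components of the divisor'' is either false (if ``the divisor'' means $E^{-1}(0)$) or useless (if it means $\DD$, whose number of components is precisely what the example $px\,\dd y - qy\,\dd x$ shows is \emph{not} topologically bounded). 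The paper closes this with a different, structural argument: every component of $\DD$ adjacent to the dicritical component $D$, other than the at most two present when $D$ is created, is the root of an invariant subtree of $\DD$, and each such subtree carries at least one isolated separatrix (the connectedness argument of \cite{mol2002} already used in Lemma~\ref{lemma_pure_multiplicity}); distinct subtrees carry distinct isolated separatrices, whence $\val(D) \leq \iota(D) + 2$ with $\iota(D)$ the number of isolated separatrices originating on $D$. Since the total number of isolated separatrices is topologically bounded, so are the valences. Some argument of this kind is needed; counting the blow-ups in $E$ alone does not control $\val(D)$.
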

\begin{proof}
All dicritical components of valence at least three   appear  as soon as all isolated separatrices are desingularized.
It then follows from Theorem~\ref{thm:Let--be} that a topologically bounded number of blow-ups create them.
This is enough to bound their number and multiplicities. For their valences, it suffices to notice that, for
a dicritical component $D$,
$\val(D) \leq \iota(D) + 2$, where $\iota(D)$ is the number of isolated separatrices originating
on $D$.
\end{proof}

\subsection{Proof of Theorem~\ref{thm:Let--be-1}.}

\begin{proof}(Theorem~\ref{thm:Let--be-1})
We observe that, in   formulas \eqref{eq:1-1}, if
$\nu_{p}\left(\mathcal{F}\right)>2$ then both second degree expressions
in $\nu_{p}\left(\mathcal{F}\right)$  provide   strictly positive integers.
Let us then apply these formulas inductively  along the reduction process of $\mathcal{F}$ as long
as  the blown-up
singularities   have algebraic multiplicities at least  $2$. In so  doing
 we are led to an expression of the form
\[
\mu\left(\mathcal{F}\right)=\left(\cdots\right)+\sum_{c\in \DD}\mu_{c}\left(E^{*}\mathcal{F}\right),
\]
where $E$ is the blowing-up process and $\DD = D^{-1}(0)$ is the exceptional divisor. The expression  represented by $\left(\cdots\right)$ is a positive integer, greater or equal to
the number of blow-ups in $E$. In particular,  this number
is at most $\mu\left(\mathcal{F}\right)$
and, thus, topologically bounded. Let $c\in \DD$ and $S_{c}$ be
the local component of the strict transform of $S$ by $E$. The multiplicity
$\nu_{c}\left(E^{*}\mathcal{F}\right)$ is equal to $0$ or $1$.
Thus, the foliation $\left(E^{*}\mathcal{F}\right)_{c}$ belongs to
the following list:
\begin{enumerate}[(i)]
\item A regular foliation.

\item A simple singularity with two eigenvalues whose quotient is   a
not a positive rational number, $\omega=\lambda_{1}x\dd y - \lambda_{2}y\dd x+\cdots$, with $ \lambda_{1} / \lambda_{2}\notin\mathbb{Q}^{+}$.

\item A Dulac singularity, $\omega=x\dd y-ny\dd x+\cdots$, with $n\in\mathbb{N}^{*}$.
The normal form is given by $\left(x+ay^{n}\right)\dd y-ny\dd x.$
The foliation is dicritical if and only if $a=0$. If $a\neq0$ then
it has only one regular separatrix.

\item A non-degenerated dicritical singularity, $\omega=px\dd y-qy\dd x+\cdots$,
with $p,q\in\mathbb{N}^{*}$ and $p / q \notin\mathbb{N}\cup(1 / \mathbb{N}).$

\item A nilpotent singularity, $\omega=y\dd y+\cdots$.
\end{enumerate}

If the singularity of  $\left(E^{*}\mathcal{F}\right)_{c}$ provides only isolated separatrices in $S_c$, then according
to Theorem \ref{thm:Let--be}, the number of blow-ups in their reduction process is topologically bounded. From previous lemmas and results about simple, Dulac, resonnant and nilpotent singularities in
\cite{berthier1999,cerveau1988,meziani1996,meziani2007},
the local curve $S_{c}$ possibly contains a dicritical separatrix only in the following three cases:

\begin{center}
\includegraphics[scale=0.6]{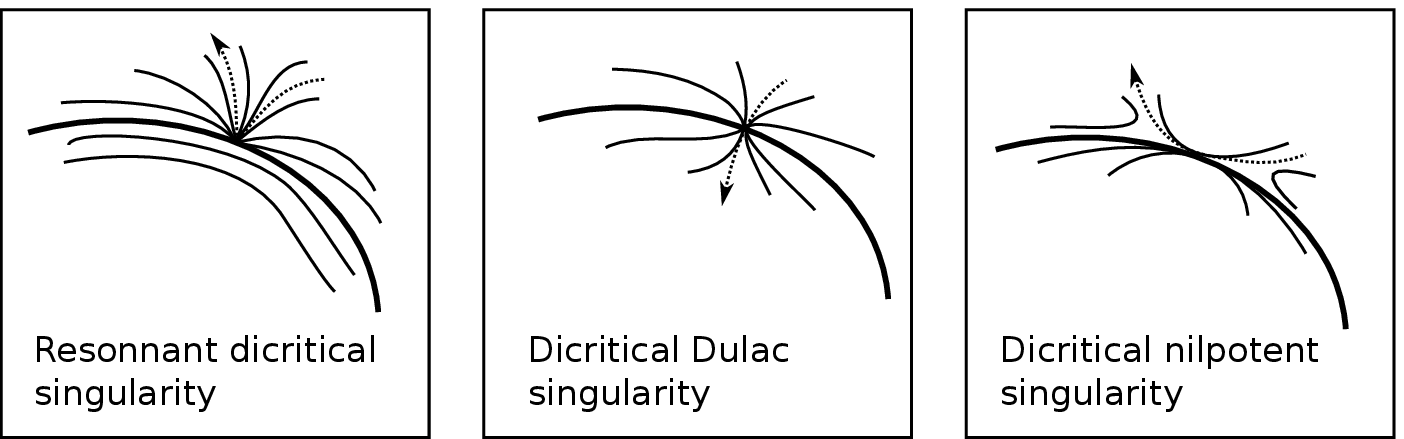}
\par\end{center}

Their analysis depends on whether or not $c$ is a corner of $\DD$.

 \smallskip
\par \noindent \underline{\it{First case:}} \  $c \in \DD$ is a non-corner point.
Denoting by  $D$ the local irreducible component of $\DD$ at $c$, we have the following
possibilities:

 \smallskip \par (i)   $\left(E^{*}\mathcal{F}\right)_{c}$ is a resonant dicritical
singularity of the form $px\dd y-qy\dd x$ with $ p /q \notin\mathbb{N}\cup (1/\mathbb{N})$.
Then all leaves are singular, except those given by $x=0$ and $y=0$. Thus,
locally $D$ is   defined by $x=0$, by   $y=0$ or is transverse
to the foliation. In all cases, the reduction process will produce
a unique dicritical component of valence $2$. So the local trace of
$S_{c}$ reduces to $x=0$ or to $y=0$, which are isolated separatrices.

 \smallskip \par (ii)   $\left(E^{*}\mathcal{F}\right)_{c}$ is a dicritical Dulac singularity
 of the form $x\dd y-ny\dd x$. Then  all leaves are smooth,
given by $x=0$ or by $y=\alpha x^{n}$, with $\alpha\in\mathbb{C}$.
If $D$ is locally given by $x=0$, then the local trace of $S_{c}$ is
some leaf $y=\alpha x^{n},$ which is smooth transverse to $\DD$ and thus
reduced. If $\DD$ is of the form $y =\alpha x^{n}$, then either the
local trace of $S_{c}$ is $x=0$, hence reduced, or it is given by
$y=\beta x^{n}$ with $\beta\neq\alpha$. In the latter case, the
reduction process provides a dicritical component of valence $2$,
which is not allowed in  definition of $S$. Suppose finally
that $D$ is not invariant. If it is transverse to every leaf or tangent
to $x=0$, then the local trace of $S$ is written as $x\left(y-\alpha x^{n}\right) = 0$
for some $\alpha$ and is reduced after one blow-up. If it is tangent
to $y=0$, then two cases can occur: if $D$ is tangent to $y=0$
with an order  $n$ or greater, then the reduction process of $\left(E^{*}\mathcal{F}\right)_{c}$
produces a dicritical component of valence $2$. If $D$ is tangent
to $y=0$ with an order $p<n$, then the dicritical component $D_{0}$
appearing in the reduction process  of $\left(E^{*}\mathcal{F}\right)_{c}$
is of valence $1$. However, since the multiplicity is an additive
function, we have
\[
\rho \left(D_{0}\right)=p \, \rho\left(D\right)\geq p.
\]
Lemma \ref{lem:The-multiplicity-of} ensures that $\rho \left(D_{0}\right)$
is topologically bounded and so is the integer $p$. Now, after $p+1$
blow-ups the curves $S_{c} =\left\{ y=\alpha x^{n}\right\} $ and $D$
are separated. So, $S_{c}$ becomes reduced after a topologically bounded number
of blow-ups.

 \smallskip \par (iii)   $\left(E^{*}\mathcal{F}\right)_{c}$ is a dicritical nilpotent
singularity. Then after a topologically bounded number of blow-ups,
we are led to a resonant or Dulac dicritical singularity $c'$. Indeed,
according to \cite{meziani1996,meziani2007}, in this situation, $2p=n=\mu\left(\left(E^{*}\mathcal{F}\right)_{c}\right)$,
so the integer $p$ is topologically bounded. If
  $c'$ is a regular point of the   exceptional divisor,
then we are in  situations (i) or (ii) above.
The case where  $c'$ is a corner will be treated
below.

\par \noindent \underline{\it{Second case:}}  $c \in \DD$ is at a  corner.
Now we denote locally $\left(\DD\right)_{c}=D_{1}\cup D_{2}$. The following
subcases occur:

 \smallskip \par (i)
  $\left(E^{*}\mathcal{F}\right)_{c}$ is a resonant dicritical
singularity. The proof goes  as in First case, (i).   All leaves except $x=0$ and $y=0$ are singular and
the reduction process    produces a dicritical component of
valence $2$. Therefore,
$S_{c}$ coincides with one of the isolated separatrices $x=0$ or $y=0$.

 \smallskip \par (ii)  $\left(E^{*}\mathcal{F}\right)_{c}$ is a dicritical Dulac singularity.
Suppose first  that both local components of $\DD$ are invariant. Then  necessarily $\left(\DD\right)_{c}$
has an equation of
  the form $x\left(y-\alpha x^{n}\right)$ for some $\alpha \in \mathbb{C}$. The
reduction process  produces a dicritical component of valence $2$ and  $S_{c}$
contains no dicritical components.

Suppose now that only one local  component of $\left(\DD\right)_{c}$, say
  $D_{1}$, is invariant. If $D_{1}=\left\{ x=0\right\} $
and $D_{2}$ is transverse to $y=0$, then $S_{c}$ is of the form $y=\alpha x^{n}$
for some $\alpha \neq 0$. In this situation, one more blow-up reduces $S_{c}$.
Suppose next that $D_{2}$ is tangent to $y=0$. Then two cases can
occur: if $D_{2}$ is tangent to $y=0$ with   order at least $n$,
then the reduction process of $\left(E^{*}\mathcal{F}\right)_{c}$
produces a dicritical component of valence $2$. On the other hand, if $D_{2}$ is tangent
to $y=0$ with order $p<n$, then the dicritical component $D_{0}$
appearing in the reduction  of $\left(E^{*}\mathcal{F}\right)_{c}$
has valence $1.$ The multiplicity of $D_{0}$ is written
\[
\rho \left(D_{0}\right)=\rho \left(D_{1}\right) + p \rho \left(D_{2}\right)\geq p.
\]
Lemma \ref{lem:The-multiplicity-of} assures that $\rho\left(D_{0}\right)$
is topologically bounded and so is the integer $p$. Now, after $p+1$
blow-ups, the curve $S_{c}=\left\{ y=\alpha x^{n}\right\}$ and $D_{2}$
are separated. So, $S_{c}$ is reduced after a topologically bounded number
of blow-ups.
If $D_{1}=\left\{ y=\alpha x^{n}\right\} $ for some
$\alpha \in \mathbb{C}$, then $D_{2}$ is transverse to $y=0.$ Thus, the reduction
process   produces a dicritical component of valence $2$. Hence
$S_{c}$ is the isolated separatrix $\left\{ x=0\right\}$.

Suppose that both components $D_{1}$ and $D_{2}$ are non-invariant.
As before, we have the following alternatives: if neither $D_{1}$
nor $D_{2}$ is tangent to $y=0$ then one blow-up is enough to reduce
$S_{c}.$ If one of the components is tangent to $y=0$, then either the
order of tangency is greater or equal to $n$ and a   dicritical
component  of valence 2 appears, or it is smaller than
$n$ and, then, topologically bounded.

 \smallskip \par (iii)  $\left(E^{*}\mathcal{F}\right)_{c}$ is a dicritical nilpotent
singularity. Then after a topologically bounded number of blow-ups,
we are led either to a resonant or to a Dulac dicritical singularity. These cases
were already treated.

\smallskip

This ends the proof of Theorem \ref{thm:Let--be-1}.
\end{proof}

\subsection{Consequences to the Poincaré problem}

Theorems \ref{thm-poincare-dic} and \ref{thm:Let--be-1} allow us to give an answer to the Poincaré problem
in the dicritical case which generalizes the one in \cite{corral2006}:

\begin{maintheorem}
\label{thm-poincare-dicI}
For every $d,r \in \mathbb{N}$
there exists $N = N(d,r) \in \mathbb{N}$ with the following property: if
  $\mathcal{F}$ is an analytic foliation on  $\mathbb{P}^{2}_{\mathbb{C}}$   of degree
$d$ having an  invariant algebraic curve $S$ of degree $d_{0}$ such
   that, for every $p \in \sing(\cl{F}) \cap S$, the local component    $(S)_{p}$ is composed by:
\begin{enumerate}[(a)]
 \item any number of isolated separatrices;
 \item at most one   separatrix attached to each dicritical  component
of valence one;
\item  no branches of separatrices attached to dicritical components of valence two;
 \item at most $r$ branches of separatrices attached to
each dicritical component of valence three or greater;
 \item if $p$ is a radial singularity, at most $r$ branches of separatrices attached to
the dicritical component of valence zero.
\end{enumerate}
Then   $d_{0} \leq N$.
\end{maintheorem}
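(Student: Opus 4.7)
My plan is to combine the Poincar\'e inequality of Theorem~\ref{thm-poincare-dic} with the topological boundedness established in Theorem~\ref{thm:Let--be-1} and Corollary~\ref{cor:The-multiplicity-of}. Starting from Theorem~\ref{thm-poincare-dic} and discarding the non-negative subtractive terms $(S,(\hat{F}_p)_0 \setminus S)_p$, I would obtain the cruder bound
\[
d_0^{2} \leq (d+2)\, d_{0} + \sum_{p \in \sing(\mathcal{F}) \cap S} (S, (\hat{F}_p)_\infty)_p.
\]
Since a degree $d$ foliation on $\mathbb{P}^{2}_{\mathbb{C}}$ satisfies the classical identity $\sum_p \mu_p(\mathcal{F}) = d^{2} + d + 1$, the number of singular points on $S$ and each individual local Milnor number are bounded by a function of $d$ alone. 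The whole problem thus reduces to producing, at every singular point $p$, an upper bound for the local intersection $(S, (\hat{F}_p)_\infty)_p$ depending only on $\mu_p(\mathcal{F})$ and $r$.

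To control the pole divisor, I would exploit the freedom in Definition~\ref{def-balanced-set}. On every dicritical component $D$ with $\val(D) \leq 2$, I would place the total coefficient mass $2 - \val(D)$ on branches of $S$ or on a single auxiliary curvet, so that $D$ contributes no pole. On every dicritical component $D$ with $\val(D) \geq 3$, I would assign coefficient $+1$ to each of the $k_D \leq r$ branches of $S$ attached to $D$ and concentrate the deficit $2 - \val(D) - k_D$ in the coefficient of a single additional curvet, which is then the only pole arising from $D$. By Corollary~\ref{cor:The-multiplicity-of}, the number of such components, together with their valences and multiplicities, is topologically bounded in $\mu_p(\mathcal{F})$. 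Consequently $(\hat{F}_p)_\infty$ has a topologically bounded number of irreducible components with topologically bounded multiplicities and weights. Hypothesis (c) is indispensable at this point: as the singularity $p x\dd y - q y\dd x$ illustrates, a single branch of $S$ on a valence-two dicritical component would already prevent any uniform bound.

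The main obstacle is then to bound the intersection number $(S, (\hat{F}_p)_\infty)_p$ itself, since a priori it can grow with the order of contact between the curves. My idea is to apply Theorem~\ref{thm:Let--be-1} to the enlarged curve $(S)_p \cup (\hat{F}_p)_\infty$, augmented if necessary with the isolated separatrices missing from $(S)_p$: by the construction above, this still fits the hypotheses (a)--(c) of that theorem with $r$ replaced by $r + 1$. It follows that $(S)_p$ and $(\hat{F}_p)_\infty$ share a common desingularization whose length and whose infinitely near multiplicities along both curves are topologically bounded by a function of $\mu_p(\mathcal{F})$ and $r$. Expanding $(S, (\hat{F}_p)_\infty)_p$ as the classical finite sum of products of these infinitely near multiplicities then yields the required local bound. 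The radial case (e) fits the same framework, since a radial singularity resolves in a single blow-up and the local reduction complexity is trivially bounded. Aggregating the local bounds and using $\mu_p(\mathcal{F}) \leq d^{2} + d + 1$ produces the required constant $N = N(d,r)$.
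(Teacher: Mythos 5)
Your proposal is correct and follows essentially the same route as the paper's proof: Bezout's theorem to bound the number of singular points and the local Milnor numbers in terms of $d$, the observation that hypotheses (b) and (c) force all poles of an adapted balanced equation to come from dicritical components of valence at least three (plus the radial case), Corollary~\ref{cor:The-multiplicity-of} to bound those components, and Theorem~\ref{thm:Let--be-1} applied to $(S)_p\cup(\hat{F}_p)_\infty$ to bound the reduction length and hence, via Noether's formula, the intersection number $(S,(\hat{F}_p)_\infty)_p$. The only point needing a touch more care is your claim that components of valence at most two contribute no poles: a valence-zero component carrying more than two branches of $S$ does force a pole in the adapted balanced equation, but since, as you note, such a singularity is reduced after a single blow-up, the corresponding intersection number is trivially bounded by $r$.
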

\begin{proof} We start by remarking that   Bezout's Theorem for foliations,
\[ \sum_{p \in \sing(\cl{F})} \mu_{p}(\cl{F}) = d^{2} + d + 1,\]
gives  a bound for the number of points $p \in \sing(\cl{F})$ as well as  for each $\mu_{p}(\cl{F})$      in terms of $d$.  Besides, Theorem~\ref{thm:Let--be-1} gives that the reduction process of $(S)_{p}$ has a topologically bounded length.
All we have to do is to find a topological bound to the term $(S,(\hat{F}_{p})_{\infty})_{p}$ in Theorem \ref{thm-poincare-dic}, where
$\hat{F}_{p}$ is a balanced equation adapted to the invariant curve $S$ at $p$.
 Hypothesis \emph{(b)} and \emph{(c)} on the   structure of $(S)_{p}$ imply that all branches in $(\hat{F}_{p})_{\infty}$ are associated to dicritical components of valence
$v(D) \geq 3$. By Lemma~\ref{cor:The-multiplicity-of}, these components arise after a topologically bounded number of blow-ups, their number and their valences are also topologically bounded.
This is enough to conclude that the number of branches and the   length of the reduction process of  $(\hat{F}_{p})_{\infty}$ are both topologically bounded. In the same way,
 $S_{p} \cup (\hat{F}_{p})_{\infty}$ has a reduction process of topologically bounded length.  This is enough to bound $(S,(\hat{F}_{p})_{\infty})_{p}$
in terms of $\mu_{p}(\cl{F})$, and thus in terms of $d$.
\end{proof}

In a sense, Theorem $E$ provides the ultimate criterion to obtain
a bound of the degree of an invariant curve from local topological data
associated to the singularities of a foliation. Indeed, the foliations described in sections
\ref{subsection-node} and \ref{subsection-linsneto}  violate the conclusion of theorem $E$. They appear to be
prototype of foliations from which one can construct exemples of families of
foliations having algebraic invariant curves that do not satisfy
conditions (b), (c), (d) or (e).

\section{Topological invariance of the algebraic multiplicity}

\label{section-topological-invariance}

Let $\mathcal{F}_{1}, \mathcal{F}_{2} \in {\rm Fol}(\mathbb{C}^{2},0)$ be two local analytic foliations. We say that a germ of homeomorphism $\Phi: (\mathbb{C}^{2},0) \to (\mathbb{C}^{2},0)$
is a topological equivalence between $\mathcal{F}_{1}$ and $\mathcal{F}_{2}$ if $\Phi$ takes
leaves of $\mathcal{F}_{1}$ into leaves of $\mathcal{F}_{2}$.
The Milnor number  (see \cite{camacho1984}) of foliation and  the $GSV$-index   (see \cite{gomezmont1991})
are well known topological invariants.
On the other hand, the topological invariance  of the algebraic multiplicity   is so far not known. This is true when  $\mathcal{F}$ is either a generalized curve foliation (by \cite{camacho1984})
--- in which case all separatrices are convergent --- or
a non-dicritical second type foliation with only  convergent separatrices (by \cite{mattei2004}).

Let $\Phi: (\mathbb{C}^{2},0) \to (\mathbb{C}^{2},0)$
be a topological equivalence between   foliations $\mathcal{F}_{1}, \mathcal{F}_{2} \in {\rm Fol}(\mathbb{C}^{2},0)$.
It is clear that if $S$ is a convergent
separatrix of $\mathcal{F}_{1}$ then $\Phi_{*}S := \Phi(S)$ is  a convergent separatrix for $\mathcal{F}_{2}$.
Actually, $\Phi(S)$ is an analytic curve as a consequence of Remmert-Stein Theorem, since
 $\overline{\Phi(S \setminus \{0\})} = \Phi(S \setminus \{0\}) \cup \{0\}$.
We recall that   non-convergent separatrices
appear as the weak separatrices of non-tangent saddle-nodes and thus are   all isolated separatrices. On the other hand,   dicritical separatrices   converge.

We start by a remark: a topological equivalence   respects the ``dicritical structure'' of the desingularization.
This means that the same combinatory  of blow-ups that desingularizes the dicritical separatrices of $\cl{F}_{1}$, when applied to $\cl{F}_{2}$,  will produce  dicritical components at exactly
the same positions. This is a consequence of Zariski's Equidesingularization Theorem for curves (see for instance
 \cite{brieskorn1986}). Actually, if $D$ is a dicritical component of the desingularization of $\cl{F}_{1}$,
 it suffices to take two dicritical separatrices $S_{1}$ and $S_{2}$ attached to $D$.
Zariski's Theorem gives that they correspond to separatrices $S_{1}'$ and $S_{2}'$ of $\cl{F}_{2}$ which will be attached to a component $D'$ produced by the same sequence of blow-ups as $D$. This argument
actually works to any pair of curves passing through $D$, thus $D'$ will be crossed by infinitely many separatrices
and will be dicritical. We will call $D$ and $D'$ {\em equivalent} dicritical components. This term is also justified
by the following:

\begin{prop} Let $\cl{F}_{1}$ and $\cl{F}_{2}$ be topological equivalent foliations. Let $D$ and $D'$ be equivalent dicritical components in their desingularizations.
Then $D$ and $D'$ have the same valence.
\end{prop}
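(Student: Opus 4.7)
The plan is to characterize the valence $\val(D)$ purely in terms of the topological type of a finite set of convergent separatrices of $\cl{F}_1$, and then transport this characterization through $\Phi$ by means of Zariski's equisingularity theorem for plane curves, in the same spirit as the construction of the equivalent dicritical component $D'$ itself.

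First, I would identify $\val(D)$ with the number of corners of $D$ in $\DD$. For each corner $p_j$, $j=1,\ldots,\val(D)$, let $T_j$ denote the connected component of $\DD \setminus \{D\}$ attached to $D$ at $p_j$, and select a convergent separatrix $S_j$ of $\cl{F}_1$ whose strict transform meets some component of $T_j$: if $T_j$ contains a dicritical component, take $S_j$ to be one of its curvets, which is automatically convergent; otherwise $T_j$ is entirely invariant, in which case the iterated version of Camacho--Sad's Separatrix Theorem yields a convergent isolated separatrix of $\cl{F}_1$ attached to some component of $T_j$. Together with two distinct curvets $C_0,C_0'$ of $D$, this yields a finite set of convergent separatrices $\mathcal{S} = \{C_0, C_0', S_1, \ldots, S_{\val(D)}\}$ of $\cl{F}_1$.

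Second, I would analyze the minimal desingularization $E_{\mathcal{S}}$ of the plane curve germ $\mathcal{S}$. The two curvets $C_0, C_0'$ have distinct tangencies only at the level of $D$ in $E_{\cl{F}_1}$, so their presence forces the sub-sequence of blow-ups of $E_{\cl{F}_1}$ that produces the component $D^\star$ corresponding to $D$. Each $S_j$ then forces at least one additional blow-up at the corner $p_j^\star$ of $D^\star$, along the branch $T_j$. Since distinct $S_j$'s enter distinct branches, the component $D^\star$ in $E_{\mathcal{S}}$ has exactly $\val(D)$ corners, and no others since the remaining points of $D^\star$ are regular for the strict transform of $\mathcal{S}$.

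Third, Zariski's equisingularity theorem gives a combinatorial isomorphism between the dual trees of $E_{\mathcal{S}}$ and $E_{\Phi(\mathcal{S})}$, since $\Phi$ restricts to a topological equivalence between the plane curves $\mathcal{S}$ and $\Phi(\mathcal{S})$. Under this isomorphism the component $D^\star$ corresponds to a component $D^{\star\star}$ in $E_{\Phi(\mathcal{S})}$ with exactly $\val(D)$ corners. Since $\Phi(C_0)$ and $\Phi(C_0')$ are curvets of $D'$ by the very definition of the equivalence $D \mapsto D'$, the component $D^{\star\star}$ is precisely the one above which $D'$ sits in the full reduction $E_{\cl{F}_2}$. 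As the corners of $D^{\star\star}$ in $E_{\Phi(\mathcal{S})}$ form a subset of the corners of $D'$ in $E_{\cl{F}_2}$, this yields $\val(D) \leq \val(D')$. Interchanging the roles of $\cl{F}_1$ and $\cl{F}_2$ through $\Phi^{-1}$ provides the reverse inequality, and hence the equality $\val(D) = \val(D')$.

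The main obstacle I anticipate is the first step, specifically in finding $S_j$ for an invariant branch $T_j$ that contains no evident convergent isolated separatrix. A direct application of the classical Camacho--Sad theorem only produces a convergent separatrix of $\cl{F}_1$ somewhere, not necessarily attached to $T_j$. One must invoke the stronger iterated form of the Separatrix Theorem, which asserts that each connected invariant subdivisor of $\DD$ carries at least one convergent isolated separatrix of $\cl{F}_1$. Once this tool is in hand, the rest reduces to bookkeeping the dual tree of the partial desingularization $E_{\mathcal{S}}$ against the full reduction tree of $\cl{F}_1$.
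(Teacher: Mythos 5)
Your overall strategy --- encoding $\val(D)$ in the equisingularity type of a finite set of convergent separatrices and transporting it through $\Phi$ via Zariski's theorem --- is genuinely different from the paper's, but your Step 2 contains a real gap: the minimal embedded resolution $E_{\mathcal{S}}$ of the curve $\mathcal{S}$ need not recreate the corners of $D$. A separatrix $S_j$ attached to a component of $T_j$ can perfectly well be smooth and transverse to $D$ at $p_j$ at the stage where $D^\star$ is created, and already separated there from all other branches of $\mathcal{S}$; in that case the minimal resolution of the \emph{curve} $\mathcal{S}$ stops at $D^\star$ along that branch and produces no corner at $p_j^\star$. Concretely, if $D$ is the first exceptional component, $T_1$ begins with the blow-up of a point $p_1\in D$, and the separatrix attached to the first component of $T_1$ is $S_1=\{y=ax^2\}$ with $a\neq 0$ (tangent direction $p_1$), while $C_0,C_0'$ are curvets with other tangent directions, then $\mathcal{S}$ consists of three pairwise transverse smooth branches and is resolved by a single blow-up: $D^\star$ has no corners although $\val(D)\geq 1$. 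The same failure occurs, even more plainly, for branches $T_j$ created \emph{before} $D$: the corresponding $S_j$ separates from $C_0,C_0'$ before $D^\star$ appears and contributes nothing to its corners. So the inequality $\val(D)\leq\#\{\text{corners of }D^\star\}$ on which your count rests is false, and the chain $\val(D)=\#\{\text{corners of }D^\star\}=\#\{\text{corners of }D^{\star\star}\}\leq\val(D')$ collapses at its first link.

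The paper avoids this by working at the moment $D$ is created rather than with a minimal curve resolution: the $\val(D)-\epsilon$ points of $D$ that must still be blown up (with $\epsilon=0,1,2$ according to whether $D$ arises from the origin, a free point or a corner) are exactly the singular points and the tangency points of the transformed foliation on $D$; each such point carries a convergent separatrix (Camacho--Sad at a singular point --- note that $D$ is non-invariant, so the separatrix produced is not $D$ itself and descends to a genuine convergent separatrix --- and the tangent leaf at a tangency point); transporting these by $\Phi$, together with the preliminary observation that regular transverse points of $D$ correspond to regular transverse points of $D'$, yields $\val(D)-\epsilon$ distinct non-reduced points on $D'$, whence $\val(D)\leq\val(D')$. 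If you wish to keep your equisingularity framework you would have to replace the minimal resolution of $\mathcal{S}$ by the full cluster of infinitely near points of a more carefully chosen curve, one that actually forces a singularity or a tangency of $\mathcal{S}$ at each $p_j$ --- which in substance reproduces the paper's argument. Your concern about the iterated Separatrix Theorem is legitimate but is not the decisive issue; the paper sidesteps it by invoking Camacho--Sad only at individual singular points lying on the non-invariant component $D$.
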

\begin{proof}
We look at $D$ and $D'$ at the very moment they appear in the desingularization process.
Denote by  $\tilde{\cl{F}}_{1}$ and $\tilde{\cl{F}}_{2}$ the transforms of $\cl{F}_{1}$ and $\cl{F}_{2}$
at this step. We first notice that
if $p \in D$ is  regular for $\tilde{\cl{F}}_{1}$ then the Equidesingularization
Theorem implies that the leaf   $L$   containing
$p$ corresponds, by the topological equivalence, to a smooth curve $L'$ crossing $D'$ transversely  at a point $p'$.
The same occurs for all points near $p$ and this forces $q$ to be a regular point
for $\tilde{\cl{F}}_{2}$.
Let us first prove that $\val(D) \leq \val(D')$.
This is evidently true if $\val(D) = 0$.
If $\val(D) = 1$, since  $D'$ appears applying the same sequence of blow-ups that produce $D$, it arises
from a blow-up at a non-corner point. Thus $\val(D')$ is 1 at least. The same argument works when
$\val(D) = 2$, now $D'$ also arising from the blow-up at a non corner point.
Let us  now suppose that $\val(D) > 2$.
In the case in which both $D$ and $D'$ appear from the blow-up at a non corner point,
 in order to
proceed towards the desingularization of $\cl{F}_{1}$, we will start a sequence of blow-ups
at $\val(D) - 1$ different points in $D$, which either  correspond  to singular  points of $\tilde{\cl{F}}_{1}$ over $D$ or
 to points where $\tilde{\cl{F}}_{1}$  and $D$ are tangent. In the first case, a convergent separatrix
exists by the separatrix Theorem \cite{camacho1982}. In the second, the tangent leaf   gives rise
 to a convergent separatrix. Following these separatrices by the topological equivalence, we   find
 $\val(D) - 1$ points over $D'$ where $\tilde{\cl{F}}_{2}$ is not desingularized. We conclude therefore that $\val(D) \leq \val(D')$. The same  argument works when $D$ and $D'$ arise from the blow-up at a corner point.
 In all cases, we have $\val(D) \leq \val(D')$. Finally, inverting the roles of $D$ and $D'$, we also have
 $\val(D') \leq \val(D)$ and this   finishes the proof of the proposition.
\end{proof}

The main drawback in dealing with topological equivalences is that they do not track formal separatrices as these are not realizable geometric objects.
Thus, when considering topological equivalent foliations $\cl{F}_{1}$ and
$\cl{F}_{2}$,  it is reasonable to suppose that
all  separatrices for at least one of the foliations, say $\cl{F}_{1}$, are convergent. With this hypothesis, given  a balanced equation of
separatrices $F_{1}$   for $\cl{F}_{1}$, we can choose a balanced equation   $F_{2}$   for $\cl{F}_{2}$ in such a way that   $\Phi_{*}(F_{1})_0  \subset (F_{2})_0$
and $\Phi_{*}(F_{1})_\infty  = (F_{2})_\infty$, the inclusion being an equality
when all separatrices of $\cl{F}_{2}$ are convergent. We can also formulate an analogous statement   for
balanced equations adapted to   sets of separatrices $C$ of $\cl{F}_{1}$ and $\Phi_{*}C$ of $\cl{F}_{2}$.

Now, our idea is
 to explore the formula in Theorem~\ref{thm-gsv-polar}. Let $C_{1}$ be a set of   separatrices
 of $\cl{F}_{1}$ --- with only convergent separatrices --- and
  $F_{1}$ be a $C_{1}$-adapted  balanced equation. Let $F_{2}$ be a balanced set of separatrices for $\cl{F}_{2}$
  adapted to
    $C_{2} = \Phi_{*}C_{1}$ as described in the previous paragraph.
  In Theorem~\ref{thm-gsv-polar},   we denote
\[ \delta_{0}(\cl{F}_{i},C_{i}) = -\left[\left(C_{i},\left(F_{i}\right)_{0}\backslash C_{i}\right)_0 -\left(C_{i},\left(F_{i}\right)_{\infty}\right)_0\right],\ i = 1,2.\]

With the notations above, the next result is straightforward:

\begin{prop}
\label{prop-delta}
If all separatrices of $\cl{F}_{1}$ are convergent, then
\[ \delta_{0}(\cl{F}_1,C_{1})  \geq \delta_{0}(\cl{F}_2,C_{2}) .\]
Moreover, equality holds
if and only if  if all separatrices of $\cl{F}_{2}$ are convergent.
\end{prop}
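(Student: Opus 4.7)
The plan is to translate the statement into an equality involving the ``extra'' formal separatrices of $\cl{F}_2$ that are not reached by the topological equivalence, and to use the well-known fact that a topological equivalence between germs of foliations preserves the intersection numbers of convergent invariant curves. More precisely, since all separatrices of $\cl{F}_1$ are convergent, every branch of $(F_1)_0$ and $(F_1)_\infty$ is a genuine analytic curve through $0$, so $\Phi_*(F_1)_0$ and $\Phi_*(F_1)_\infty$ are analytic curves of $\cl{F}_2$ by the Remmert--Stein argument already used in the text. By Zariski's equisingularity/equidesingularization theorem, the homeomorphism $\Phi$ preserves the intersection multiplicity of pairs of analytic branches at the origin; in particular
\[
(C_2,\Phi_*(F_1)_\infty)_0 \ =\ (C_1,(F_1)_\infty)_0,
\qquad
(C_2,\Phi_*(F_1)_0\setminus C_2)_0 \ =\ (C_1,(F_1)_0\setminus C_1)_0 .
\]

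Next, I would exploit the way $F_2$ was chosen. By the paragraph preceding the statement, one has $\Phi_*(F_1)_\infty=(F_2)_\infty$ while $\Phi_*(F_1)_0\subset (F_2)_0$, with equality if and only if every separatrix of $\cl{F}_2$ is convergent. Write $(F_2)_0=\Phi_*(F_1)_0\cup R$, where $R$ collects the branches of $(F_2)_0$ not matched with branches of $(F_1)_0$. Since dicritical separatrices are automatically convergent and since isolated separatrices of $\cl{F}_1$ correspond through $\Phi$ to isolated separatrices of $\cl{F}_2$ (the dicritical structure being preserved, as already established), $R$ consists exactly of the purely formal isolated separatrices of $\cl{F}_2$, which must appear in any balanced equation by Definition~\ref{def-balanced-set}. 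Then
\[
(C_2,(F_2)_0\setminus C_2)_0 \ =\ (C_2,\Phi_*(F_1)_0\setminus C_2)_0 + (C_2,R)_0 \ =\ (C_1,(F_1)_0\setminus C_1)_0 + (C_2,R)_0 .
\]

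Combining with the identity for $(F_2)_\infty$, I obtain
\[
\delta_0(\cl{F}_2,C_2) \ =\ \delta_0(\cl{F}_1,C_1) - (C_2,R)_0 .
\]
Since $(C_2,R)_0\ge 0$, this yields the inequality $\delta_0(\cl{F}_1,C_1)\ge\delta_0(\cl{F}_2,C_2)$. For the equality case, assuming $C_1\neq\emptyset$ (otherwise the statement is vacuous), every branch of $C_2$ and every branch of $R$ passes through $0$, so $(C_2,R)_0>0$ whenever $R\neq\emptyset$. Therefore equality holds exactly when $R=\emptyset$, that is, precisely when all separatrices of $\cl{F}_2$ are convergent.

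The only non-routine point is the preservation of intersection numbers by $\Phi$; I expect this to be the main (but standard) obstacle, and I would invoke Zariski's equidesingularization theorem, together with the already-established fact that $\Phi$ respects the desingularization tree and the dicritical structure, to justify it without further calculation.
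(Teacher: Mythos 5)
Your argument is correct and is precisely the ``straightforward'' verification the paper leaves implicit: it rests on the topological invariance of intersection numbers of branches (Zariski equisingularity), the choice $\Phi_*(F_1)_\infty=(F_2)_\infty$, $\Phi_*(F_1)_0\subset (F_2)_0$, and the observation that the defect $R$ consists exactly of the purely formal isolated separatrices of $\cl{F}_2$, so that $\delta_0(\cl{F}_2,C_2)=\delta_0(\cl{F}_1,C_1)-(C_2,R)_0$. The only (harmless) caveat is the degenerate case $C_1=\emptyset$, which you correctly flag, where the ``only if'' direction of the equality statement is vacuous.
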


The next result, a consequence  of Theorem~\ref{thm-gsv-polar},  says in particular
that   the $\var$-index is a topological invariant when both foliations have convergent separatrices.
\begin{prop}
\label{prop-top-invariance}
Suppose that all separatrices of $\cl{F}_{1}$ are convergent and let $C_1$ be a set of some of its separatrices.  Then
\[ \var_{p}(\cl{F}_1,C_{1})  \leq  \var_{p}(\cl{F}_2,C_{2}) ,\]
where   $C_{2} = \Phi_{*}C_{1}$. Furthermore, equality holds
if and only if  all separatrices of $\cl{F}_{2}$ are convergent.
\end{prop}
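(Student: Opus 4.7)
The plan is to reduce the inequality to a combination of Theorem~\ref{thm-gsv-polar}, the topological invariance of the GSV--index, and Proposition~\ref{prop-delta}. First, I apply Theorem~\ref{thm-gsv-polar} to each pair $(\cl{F}_i,C_i)$ together with its balanced equation $F_i$ adapted to $C_i$; rearranging the identity given there, one gets
\[ \var_p(\cl{F}_i,C_i) \;=\; GSV_p(\cl{F}_i,C_i) + \delta_0(\cl{F}_i,C_i), \qquad i=1,2, \]
with $\delta_0$ exactly the correction term introduced in the preamble to Proposition~\ref{prop-delta}. Subtracting gives
\[ \var_p(\cl{F}_2,C_2) - \var_p(\cl{F}_1,C_1) = \bigl(GSV_p(\cl{F}_2,C_2) - GSV_p(\cl{F}_1,C_1)\bigr) + \bigl(\delta_0(\cl{F}_2,C_2) - \delta_0(\cl{F}_1,C_1)\bigr). \]

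The next step is to show that the two GSV--terms coincide. Since all separatrices of $\cl{F}_1$ are convergent, $C_1$ is analytic, and so is $C_2=\Phi_*C_1$ by Remmert--Stein applied to $\overline{\Phi(C_1\setminus\{0\})}$ (as already noted at the beginning of the section). For analytic invariant curves of a holomorphic foliation the GSV--index is a classical topological invariant, so the homeomorphism $\Phi$ yields $GSV_p(\cl{F}_1,C_1)=GSV_p(\cl{F}_2,C_2)$. The desired inequality, together with its equality characterisation, then follows directly from Proposition~\ref{prop-delta}.

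The delicate point is the compatibility of the balanced equations $F_1$ and $F_2$ that makes Proposition~\ref{prop-delta} applicable: one needs $F_2$ adapted to $C_2$ with $\Phi_*(F_1)_0\subset (F_2)_0$ and $\Phi_*(F_1)_\infty=(F_2)_\infty$, the extra branches of $(F_2)_0\setminus\Phi_*(F_1)_0$ being precisely the non-convergent weak separatrices of non-tangent saddle-nodes of $\cl{F}_2$. This alignment rests on three ingredients already at our disposal: the preservation of the dicritical structure by $\Phi$ established just above, the structural fact that in a convergent foliation all non-convergent separatrices are weak separatrices of saddle-nodes, and the fact that intersection numbers of analytic curves are topological invariants. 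Once this bookkeeping is in place, the short algebraic manipulation above closes the proof.
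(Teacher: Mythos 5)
Your expansion of the paper's one-line proof is faithful in structure: the identity $\var_p(\cl{F}_i,C_i)=GSV_p(\cl{F}_i,C_i)+\delta_0(\cl{F}_i,C_i)$ is the correct rearrangement of Theorem~\ref{thm-gsv-polar}, and cancelling the $GSV$-terms by topological invariance is exactly the intended route. The problem is the final step. From your own display,
\[
\var_p(\cl{F}_2,C_2)-\var_p(\cl{F}_1,C_1)=\delta_0(\cl{F}_2,C_2)-\delta_0(\cl{F}_1,C_1),
\]
while Proposition~\ref{prop-delta} gives $\delta_0(\cl{F}_1,C_1)\geq\delta_0(\cl{F}_2,C_2)$. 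What this actually yields is $\var_p(\cl{F}_1,C_1)\geq\var_p(\cl{F}_2,C_2)$, the \emph{reverse} of the inequality you set out to prove; saying the desired inequality ``follows directly'' glosses over a sign that does not work out. One can check independently that $\geq$ is the true direction: the poles of the two balanced equations correspond under $\Phi$ and have equal intersection with $C_i$, whereas $(F_2)_0$ may contain extra purely formal branches meeting $C_2$ positively, so $\delta_0$ (and hence $\var$) can only drop when passing to $\cl{F}_2$. This is also the direction needed later: in the proof of Theorem~\ref{thm-topological-invariance}, $\var_p(\cl{F}_2,B_2)=0$ is deduced from $\var_p(\cl{F}_1,B_1)=0$, which requires $\var_p(\cl{F}_2,B_2)\leq\var_p(\cl{F}_1,B_1)$ combined with Proposition~\ref{prop:VarPos}.

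So the gap is not in your strategy --- it is the paper's strategy --- but in accepting the printed direction of the inequality without testing it against Proposition~\ref{prop-delta}; the two statements, together with Theorem~\ref{thm-gsv-polar} and the invariance of the $GSV$-index, are mutually inconsistent unless equality always held, which the ``furthermore'' clauses rule out. A correct write-up should either prove $\var_p(\cl{F}_1,C_1)\geq\var_p(\cl{F}_2,C_2)$ (which is what the chain of lemmas delivers, and which suffices for every subsequent application, the equality criterion being unaffected), or supply an argument for why the sign reverses --- which your proof does not contain.
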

\begin{proof} The result follows straight  from the topological invariance of the $GSV$-index  and
from Proposition \ref{prop-delta}.
\end{proof}

We close this article by presenting a result
on the topological invariance of the algebraic multiplicity. It
 generalizes at a time the results contained in
\cite{camacho1984}  and in
 \cite{mattei2004} mentioned in the beginning of the section.

\begin{maintheorem}
\label{thm-topological-invariance}
 Suppose that $\mathcal{F}_{1}$ and $\mathcal{F}_{2}$ are topological equivalent  analytic foliations having
only convergent separatrices. Then $\cl{F}_{1}$ is of second type if
and only if $\cl{F}_{2}$ is of second type. As a consequence, $\cl{F}_{1}$ and
  $\cl{F}_{2}$ have the same algebraic multiplicities.
\end{maintheorem}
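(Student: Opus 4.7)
The plan is to combine the topological invariance of the polar excess (Proposition~\ref{prop-top-invariance}) with the blow-up recurrence of Proposition~\ref{prop:Var1} to transport second-type-ness across $\Phi$, and then invoke Proposition~\ref{prop:Equa-Ba} to deduce equality of algebraic multiplicities.

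First I would choose corresponding balanced equations $F_i$ for $\cl{F}_i$. Since all separatrices of both foliations are convergent, $\Phi$ induces an analytic-type-preserving bijection between them; by Zariski's equidesingularization theorem (cf.~\cite{brieskorn1986}) it extends to a combinatorial equivalence of the two desingularization trees that matches dicritical components of equal valence, as established in the proposition preceding the theorem. Pick $F_2$ whose coefficients $a_{D',C'}$ agree with those of $F_1$ under this correspondence, so that $(F_2)_0 = \Phi_{*}(F_1)_0$ and $(F_2)_\infty = \Phi_{*}(F_1)_\infty$ as analytic curves; topological invariance of the algebraic multiplicity of a plane curve then gives $\nu_0(\hat F_1) = \nu_0(\hat F_2)$.

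Next I would iterate Proposition~\ref{prop:Var1} along the desingularization of each irreducible branch $B$ of $(F_i)_0$ to obtain a decomposition of the form
\[
\var_0(\cl{F}_i, B) \;=\; \var_{\mathrm{end}}(\tilde B) \;+\; \sum_{c} \tau\bigl((\cl{F}_i)_c\bigr)\, \nu_c(\tilde B_c),
\]
where the sum runs over the blow-up centers along the desingularization path of $B$ and $\var_{\mathrm{end}}(\tilde B)$ is the local polar excess at its reduced endpoint. By Example~\ref{example-reduced}, $\var_{\mathrm{end}}(\tilde B)$ vanishes unless $\tilde B$ becomes the weak separatrix of a saddle-node, in which case it equals the weak index minus one --- a quantity equal to the local Milnor number minus one, hence topologically invariant. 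Since equidesingularization provides a type-preserving bijection between the reduced endpoints, and every non-divisor weak separatrix arising in the reduction is, by hypothesis, a convergent separatrix of the original foliation and thus recognized by $\Phi$, the endpoint contributions agree on both sides. Combined with Proposition~\ref{prop-top-invariance}, which yields $\var_0(\cl{F}_1,(F_1)_0) = \var_0(\cl{F}_2,(F_2)_0)$, the residual tangency-excess sums must also coincide. As every summand $\tau((\cl{F}_i)_c)\nu_c(\tilde B_c)$ is a non-negative integer, this equality propagates inductively from the deepest centers back to the root of the tree, forcing $\tau((\cl{F}_1)_c) = \tau((\cl{F}_2)_c)$ at every center $c$. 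In particular $\tau(\cl{F}_1) = \tau(\cl{F}_2)$, and the first assertion --- $\cl{F}_1$ is of second type if and only if $\cl{F}_2$ is --- follows as the case of vanishing.

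For the second assertion, Proposition~\ref{prop:Equa-Ba} together with $\nu_0(\hat F_1) = \nu_0(\hat F_2)$ and $\tau(\cl{F}_1) = \tau(\cl{F}_2)$ immediately yields $\nu_0(\cl{F}_1) = \nu_0(\cl{F}_2)$. The hard part will be making the refined equidesingularization correspondence fully rigorous: while Zariski's theorem matches the combinatorial desingularization trees of the separatrix curves, one must further show that at each stage the tangent versus well-oriented status of saddle-nodes is preserved by $\Phi$, and that individual tangency-excess contributions can indeed be isolated by a suitable choice of branches $B$. Both points rest critically on the standing hypothesis that all separatrices are convergent, which ensures that every non-divisor weak separatrix in the reduction descends to a genuine convergent separatrix of the original foliation, hence visible to the topological equivalence.
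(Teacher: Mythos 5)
There is a genuine gap, in two places. First, the step ``as every summand $\tau((\cl{F}_i)_c)\nu_c(\tilde B_c)$ is a non-negative integer, this equality propagates inductively\dots forcing $\tau((\cl{F}_1)_c)=\tau((\cl{F}_2)_c)$ at every center'' is not valid: equality of two sums of non-negative integers says nothing about the individual terms ($1+0=0+1$), and nothing in your setup isolates the contribution of a single blow-up center. Second, and more seriously, your claim that the endpoint contributions $\var_{\mathrm{end}}$ agree on both sides is unsupported. The equivalence $\Phi$ lives downstairs and does not lift to the reduced models; Zariski equidesingularization matches the combinatorics of the separatrix curves, not the analytic or topological type of the reduced singularities of the \emph{foliations} sitting at the corresponding endpoints. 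Whether $\tilde B$ terminates as the weak separatrix of a saddle-node, and with which weak index, is exactly the kind of data about the reduction that is not known a priori to be preserved --- the ``local Milnor number'' you invoke is that of a singular point created by blow-ups, over which $\Phi$ gives no control --- so asserting that these endpoint data match is essentially assuming the conclusion. Your closing caveat names both issues, but they are the proof, not loose ends. Moreover, the full-strength claim $\tau(\cl{F}_1)=\tau(\cl{F}_2)$, on which your derivation of equal algebraic multiplicities rests, is stronger than what the paper establishes (it only deduces the vanishing of one tangency excess from the vanishing of the other).

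The paper's argument avoids all of this by working with a single well-chosen branch instead of decomposing over all of $(\hat F_1)_0$: by the proof of the Separatrix Theorem there is a separatrix $B_1$ of $\cl{F}_1$ that does \emph{not} terminate as the weak separatrix of a saddle-node. If $\cl{F}_1$ is of second type, Proposition~\ref{prop:Var1} (with $\tau=0$) together with Example~\ref{example-reduced} gives $\var_0(\cl{F}_1,B_1)=0$; the equality case of Proposition~\ref{prop-top-invariance} (both foliations having only convergent separatrices) transports this to $\var_0(\cl{F}_2,\Phi_*B_1)=0$; and then $\var_0(\cl{F}_2,\Phi_*B_1)\geq \tau(\cl{F}_2)\,\nu_0(\Phi_*B_1)$ from Propositions~\ref{prop:Var1} and~\ref{prop:VarPos} forces $\tau(\cl{F}_2)=0$. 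No matching of saddle-node endpoint data and no term-by-term identification of tangency excesses is required. If you want to keep your global decomposition, you would first have to prove that the saddle-node structure at the end of the reduction is a topological invariant, which is essentially the content of the theorem.
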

\begin{proof}
We start by remarking that a local foliation always has a separatrix which  does
not arise as the weak separatrix of a saddle-node. This is the essence of  the proof
of the Separatrix Theorem in \cite{camacho1982}. Let $B_{1}$ be such a separatrix for
 $\cl{F}_{1}$ and set
$B_{2} = \Phi_{*}B_{1}$.  If $\cl{F}_{1}$ is of second type then, as a consequence
of   Proposition~\ref{prop:Var1}, $\var_{p}(\cl{F}_1,B_{1})$
does not change along the desingularization of $B_{1}$ and, using Example~\ref{example-reduced}, we find $\var_{p}(\cl{F}_1,B_{1}) = 0$.  By Proposition~\ref{prop-top-invariance}, we also have
$\var_{p}(\cl{F}_2,B_{2}) = 0$. Proposition~\ref{prop:Var1} then gives $\tau(\cl{F}_{2}) = 0$, which means
that $\cl{F}_{2}$ is a foliation of second type.
Now,
 since a balanced set of separatrices for $\cl{F}_{1}$ is taken by $\Phi$
into a balanced set for $\cl{F}_{2}$, both  sets   have the same algebraic multiplicity
 as a consequence of  the Equidesingularization Theorem for curves. Finally, Proposition~\ref{prop:Equa-Ba} assures that the
  same will be true for the algebraic
 multiplicities of  $\cl{F}_{1}$ and
  $\cl{F}_{2}$.
\end{proof}

\bibliographystyle{plain}
\bibliography{Bibliographie}

\medskip \medskip
\noindent
Yohann Genzmer  \\
I.M.T. Universit\'e Paul Sabatier \\
118 Route de Narbonne \\
31400  --
Toulouse,
France\\
yohann.genzmer@math.univ-toulouse.fr

\medskip \medskip
\noindent
Rog\'erio  Mol  \\
Departamento de Matem\'atica \\
Universidade Federal de Minas Gerais \\
Av. Ant\^onio Carlos, 6627  \  C.P. 702  \\
30123-970  --
Belo Horizonte -- MG,
Brasil \\
rsmol@mat.ufmg.br

\end{document}